\newtheorem{thm}{Theorem}[section]
\newtheorem{dfn}[thm]{Definition}
\newtheorem{cor}[thm]{Corollary}
\newtheorem{prop}[thm]{Proposition}
\newtheorem{lemma}[thm]{Lemma}
\newcommand{\del}{\backslash}
\newcommand{\cl}{\hbox{\rm cl}}
\newfont{\menutt}{cmtt8}
\title[Lattice Path Matroids: The Excluded Minors]{Lattice Path Matroids:
  The Excluded Minors} \date{\today}
\author
       {Joseph E.~Bonin}
\address
{Department of Mathematics\\ The George Washington University\\
Washington, D.C. 20052} \email 
{jbonin@gwu.edu}
\subjclass{Primary: 05B35} 
\keywords{Matroid, transversal matroid, 
lattice path matroid, excluded-minor}
\begin{document}

\begin{abstract}
  A lattice path matroid is a transversal matroid for which some
  collection of incomparable intervals in some linear order on the
  ground set is a presentation.  We characterize the minor-closed
  class of lattice path matroids by its excluded minors.
\end{abstract}

\maketitle

\section{Introduction}

Among transversal matroids, it is natural to consider those for which
some presentations have special structure.  We consider transversal
matroids for which at least one presentation consists of intervals in
some linear order on the ground set and no interval contains another;
this gives the class $\mathcal{L}$ of lattice path matroids.  Unlike
the class of all transversal matroids, if a matroid is in
$\mathcal{L}$, then so are its minors.  A major theme in matroid
theory is characterizing minor-closed classes of matroids by their
excluded minors, that is, by the minor-minimal matroids that are not
in the class; we give such a characterization of $\mathcal{L}$.  After
a section of background, this result and its proof occupy the rest of
the paper.

We briefly sketch this research area.  Nested matroids, the
minor-closed subclass of $\mathcal{L}$ whose members have
presentations that are chains of intervals in linear orders, have been
introduced many times and under many names (see \cite{lpm2}),
apparently first by H.~Crapo~\cite{single}.  As N.~White noted in his
review of~\cite{lpm2} in Mathematical Reviews, R.~Stanley mentioned
lattice path matroids (without this name) in~\cite{stanley}; no
results were given.  Independently, J.~Lawrence~\cite{jim} introduced
and studied oriented counterparts of these matroids.  Lattice path
matroids were independently introduced and studied in depth
in~\cite{lpm1,lpm2}; the lattice path perspective used there
accounts for the name.  They have been studied further by
J.~Schweig~\cite{jay} and applied to a problem in enumeration by A.~de
Mier and M.~Noy~\cite{tennis}.  A larger minor-closed class of
transversal matroids was defined and studied in~\cite{omer}.
In~\cite{flag}, A.~de Mier used lattice paths in higher dimensions to
define a related type of flag matroid.  Following a suggestion by
V.~Reiner~\cite{vic} that there should be a type-B counterpart of the
Catalan matroid (a certain nested matroid), J.~Bonin and A.~de Mier
defined a class of Lagrangian matroids based on lattice paths; this
topic has been studied by A.~Gundert, E.~Kim, and
D.~Schymura~\cite{crm}.

\section{Background}\label{sec:background}

We assume readers know basic matroid theory; see~\cite{ox} for an
excellent account.  The results we use to prove the excluded-minor
characterization of $\mathcal{L}$ are collected below.

\subsection{Connected and cyclic flats}
A flat $F$ of a matroid $M$ is \emph{connected} if the restriction
$M|F$ is connected.  Only connected flats of rank zero or one can be
trivial, that is, independent.  The following lemma, akin
to~\cite[Exercise 2.1.13]{ox}, is easy to prove.

\begin{lemma}
  A loopless matroid is determined by its collection of nontrivial
  connected flats and their ranks.
\end{lemma}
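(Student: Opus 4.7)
My plan is to recover $M$ by characterizing its independent sets in terms of the given data, as a matroid on a fixed ground set is determined by its independent sets. Specifically I aim to prove:
\[
I \text{ is independent in } M \iff |I \cap F| \le r(F) \text{ for every nontrivial connected flat } F \text{ of } M.
\]
The ``only if'' direction is immediate, since if $I$ is independent in $M$ and $F$ is any flat, then $I \cap F$ is an independent subset of $F$ in $M|F$ and hence $|I \cap F| \le r(F)$.

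The substantive direction is the converse, which reduces to the claim that every dependence in $M$ is witnessed by a nontrivial connected flat. If $I$ is dependent it contains a circuit $C$, and I would take $F = \cl(C)$. Then $F$ is a flat by construction; since $|F| \ge |C| \ge 2$, it is nontrivial; and because $C$ is a spanning circuit of $M|F$ while any matroid with a spanning circuit is connected, $F$ is a connected flat of $M$. As $r(F) = |C|-1$, we have $|I \cap F| \ge |C| > r(F)$, violating the inequality. This also explains why the hypothesis ``loopless'' is harmless: loops would be trivial rank-$0$ data that the connected-flat list could not detect, but they are excluded.

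I do not expect any substantial obstacle: the only genuine step is the construction of the nontrivial connected flat witnessing a given dependence, and this is just the closure of a single circuit inside that dependence. Once the independence characterization is established, the matroid is determined (on the implicitly given ground set), completing the proof.
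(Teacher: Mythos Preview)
Your argument is correct. The paper does not actually supply a proof of this lemma; it merely remarks that it is easy and akin to Exercise~2.1.13 in Oxley (the companion fact about cyclic flats). Your approach---characterizing the independent sets by the inequalities $|I\cap F|\le r(F)$ over nontrivial connected flats, and witnessing any dependence via $F=\cl(C)$ for a circuit $C\subseteq I$---is precisely the standard way to fill this in.

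One minor point of phrasing: the clause ``since $|F|\ge|C|\ge 2$, it is nontrivial'' is not quite the right justification, because in this paper \emph{trivial} means \emph{independent}, and a flat with two or more elements can certainly be independent. The correct (and implicit) reason is that $F=\cl(C)$ contains the circuit $C$ and is therefore dependent, hence nontrivial. With that tweak the proof is complete.
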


\begin{lemma}\label{lem:stayconn}
  If $F$ is a flat of $M$ and $x\in E(M)-F$, then $M/x|F = M|F$; thus,
  if $F$ is connected and $\cl_M(x)=\{x\}$, then $\cl_{M/x}(F)$ is a
  connected flat of $M/x$.
\end{lemma}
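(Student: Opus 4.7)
The plan is to verify the two claims in order, both via direct rank-function arguments.

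For the first claim, I would show that the rank functions of $M/x|F$ and $M|F$ agree on every $A\subseteq F$. Recall $r_{M/x}(A)=r_M(A\cup\{x\})-r_M(\{x\})$. Since every loop of $M$ lies in every flat, $x\notin F$ forces $x$ to be a nonloop, so $r_M(\{x\})=1$. Since $F$ is a flat with $x\notin F$, we have $x\notin \cl_M(A)$ for any $A\subseteq F$, so $r_M(A\cup\{x\})=r_M(A)+1$. Subtracting gives $r_{M/x}(A)=r_M(A)$, which proves the first claim.

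For the second claim, set $G=\cl_{M/x}(F)$; this is automatically a flat of $M/x$, so it remains to prove that $(M/x)|G$ is connected. By the first claim, $(M/x)|F=M|F$ is connected, so it suffices to show that every $y\in G-F$ lies in a circuit of $(M/x)|G$ that meets $F$. The key rank computation is: $y\in\cl_{M/x}(F)$ gives $r_M(F\cup\{x,y\})=r_M(F)+1$, while $y\notin F=\cl_M(F)$ gives $r_M(F\cup\{y\})=r_M(F)+1$ as well. Hence $x\in \cl_M(F\cup\{y\})-\cl_M(F)$, so there is a circuit $C$ of $M$ with $\{x,y\}\subseteq C\subseteq F\cup\{x,y\}$.

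Next I would check that $C-x$ is a circuit of $M/x$. The hypothesis $\cl_M(\{x\})=\{x\}$ rules out $|C|=2$ (which would require $y$ parallel to $x$), so $|C|\geq 3$; a short rank check using that $C-x$ is independent in $M$ then shows $r_{M/x}(C-x)=|C-x|-1$ and that every proper subset is independent in $M/x$. Thus $C-x$ is a circuit of $M/x$ lying in $F\cup\{y\}\subseteq G$, containing $y$ and at least one element of $F$. Combined with the connectedness of $F$ in $M/x$, this shows $G$ is connected. The main technical point to handle carefully is this circuit manipulation in the contraction, and this is where the hypothesis $\cl_M(\{x\})=\{x\}$ is used.
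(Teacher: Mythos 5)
Your proof is correct. The paper states Lemma~\ref{lem:stayconn} without proof, regarding it as routine, so there is no author argument to compare against; your rank-function computation for $M/x|F = M|F$ and the fundamental-circuit argument showing that each $y \in \cl_{M/x}(F) - F$ lies in a circuit of $M/x$ meeting $F$ (contained in $F \cup \{y\}$, of size at least $2$ because $\cl_M(x) = \{x\}$ rules out $|C|=2$) are a valid and natural way to establish both claims.
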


The flats in the following definition play major roles in our work.
(All references to incomparability in this paper are with respect to
containment.)

\begin{dfn} 
  The proper nontrivial connected flats of a matroid are its
  \emph{pnc-flats}.  A pnc-flat is \emph{reducible} if it is the
  intersection of some pair of incomparable pnc-flats.  A
  \emph{fundamental flat} is a pnc-flat $F$ such that, for some
  spanning circuit $C$ of the matroid, $F\cap C$ is a basis of $F$.
\end{dfn}

A set in a matroid is \emph{cyclic} if it is a (possibly empty) union
of circuits.  Apart from singleton flats of rank one, connected flats
are cyclic.  Cyclic flats may be disconnected.  Note that if $F$ and
$G$ are cyclic flats of $M$ with $F\subsetneq G$, then $|G-F|\geq 2$.
The following lemma~\cite[Exercise 2.1.13]{ox} relates the cyclic
flats of a matroid and those of its dual.

\begin{lemma}
  A set $F\subseteq E(M)$ is a cyclic flat of $M$ if and only if
  $E(M)-F$ is a cyclic flat of $M^*$.
\end{lemma}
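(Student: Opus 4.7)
The plan is to unpack the definition of ``cyclic flat'' into two simultaneous conditions and then apply two standard pieces of matroid duality to each condition separately. Specifically, I would first recall (or cite from Oxley) the two duality facts: (i) the circuits of $M$ are exactly the cocircuits of $M^*$, so that a set is a union of circuits of $M$ iff it is a union of cocircuits of $M^*$; and (ii) a subset $A\subseteq E(M)$ is a flat of $M$ iff its complement $E(M)-A$ is a union of cocircuits of $M$ (i.e., the complement of a flat is a union of cocircuits, and conversely).

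Now suppose $F$ is a cyclic flat of $M$. By definition this means two things: $F$ is a union of circuits of $M$, and $F$ is a flat of $M$. Applying (i) to the first condition, $F$ is a union of cocircuits of $M^*$, which by (ii) applied in $M^*$ says that $E(M)-F$ is a flat of $M^*$. Applying (ii) in $M$ to the second condition, $E(M)-F$ is a union of cocircuits of $M$, which by (i) is a union of circuits of $M^*$, i.e., $E(M)-F$ is cyclic in $M^*$. Together, $E(M)-F$ is a cyclic flat of $M^*$.

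For the converse, the argument is literally the same applied to $M^*$ in place of $M$, using $(M^*)^* = M$: if $E(M)-F$ is a cyclic flat of $M^*$, then by the forward direction $E(M)-(E(M)-F) = F$ is a cyclic flat of $(M^*)^* = M$.

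I do not anticipate a main obstacle; the statement is essentially a bookkeeping exercise once one has the two duality facts above, and each of those facts is either an exercise in Oxley or an immediate consequence of the hyperplane/cocircuit correspondence. The only thing to take a little care with is separating the two defining conditions of ``cyclic flat'' and applying the correct duality fact to each one, rather than trying to dualize the combined notion in one move.
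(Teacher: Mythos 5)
Your argument is correct: you cleanly separate the two defining conditions of ``cyclic flat'' and dualize each via the standard facts that circuits of $M$ are cocircuits of $M^*$ and that a set is a flat of $M$ iff its complement is a union of cocircuits of $M$. The paper itself gives no proof, simply citing this as Exercise~2.1.13 of Oxley, so there is no in-paper argument to compare against; your approach is the standard one the exercise intends.
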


\subsection{Lattice path matroids}\label{sec:lpms}
While many results in this subsection are from~\cite{lpm2}, some are
extensions or refinements that are tailored to the work in this paper.

A \emph{lattice path matroid} is a transversal matroid that has a
presentation by an antichain of intervals in some linear order on the
ground set.  (It is easy to translate between this and the lattice
path view in~\cite{lpm2}.)  Thus, such a transversal matroid $M$ of
rank $r$ has a presentation $\mathcal{A}=(J_1,J_2,\ldots,J_r)$ where,
relative to some linear order $e_1<e_2<\cdots<e_n$ on $E(M)$, we have
elements $a_1<a_2<\cdots<a_r$ and $b_1<b_2<\cdots<b_r$ with $a_i\leq
b_i$ and $J_i=[a_i,b_i]$.  Such a linear order is a \emph{path order}
of $M$, the elements $e_1$ and $e_n$ are \emph{terminal elements} of
$M$, and we call $\mathcal{A}$ an \emph{interval presentation} of $M$.
(The term path order reflects the fact that with such an order, it is
easy to give a bijection between the bases of $M$ and the lattice
paths in a certain region of the plane.)

Let $\mathcal{L}$ be the class of lattice path matroids.  A matroid in
$\mathcal{L}$ may have many path orders and many terminal elements;
for example, all uniform matroids are in $\mathcal{L}$ and all their
linear orders are path orders.  Note that if $e_1<e_2<\cdots<e_n$ is a
path order of $M\in \mathcal{L}$, then so is $e_1>e_2>\cdots>e_n$.
The following result recasts~\cite[Theorem~5.6]{lpm2}.

\begin{prop}\label{prop:uniqpath}
  For each path order of $M\in \mathcal{L}$, there is only one
  interval presentation.
\end{prop}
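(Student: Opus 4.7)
The plan is to show that, with the path order $e_1<e_2<\cdots<e_n$ fixed, each endpoint $a_i$ and $b_i$ of the interval $J_i=[a_i,b_i]$ is determined by $M$ alone; two interval presentations relative to the same path order must then coincide. The key tool is a rank formula for initial segments of the path order: for $S_k=\{e_1,\ldots,e_k\}$, I claim $r_M(S_k)=|\{i:a_i\leq e_k\}|$.

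To prove the formula, set $t=|\{i:a_i\leq e_k\}|$. Since $a_{t+1},\ldots,a_r$ all strictly exceed $e_k$, the intervals $J_{t+1},\ldots,J_r$ are disjoint from $S_k$, so at most $t$ intervals contribute representatives from $S_k$ and $r_M(S_k)\leq t$. Conversely, the elements $a_1<a_2<\cdots<a_t$ are $t$ distinct members of $S_k$ with $a_j\in J_j$, exhibiting a partial system of distinct representatives of size $t$, so $r_M(S_k)\geq t$. Equality follows.

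From the formula, $a_i$ is recovered as the $\leq$-least $e$ for which $r_M(\{e':e'\leq e\})\geq i$, i.e., the position at which the initial-segment rank first reaches $i$. To recover the right endpoints I would invoke the remark just before the proposition, namely that $e_n<e_{n-1}<\cdots<e_1$ is also a path order; running the same argument there, with the roles of $a_i$ and $b_{r-i+1}$ exchanged, yields $b_i=\max\{e:r_M(\{e':e'\geq e\})\geq r-i+1\}$. Since every $a_i$ and $b_i$ is thus determined by $M$ and the chosen path order, the presentation $(J_1,\ldots,J_r)$ is unique.

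The main obstacle is the rank formula itself, and it rests squarely on the antichain structure: the monotonicity of both $a_1<\cdots<a_r$ and $b_1<\cdots<b_r$ is what lets the greedy assignment $J_j\mapsto a_j$ succeed with no clash and guarantees that intervals starting past $e_k$ cannot secretly reach back into $S_k$. Without that monotonicity one would have to confront a positive Hall deficiency, and the clean closed form for $r_M(S_k)$ would fail; this is precisely where the lattice path hypothesis pays off.
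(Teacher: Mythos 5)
Your argument is correct and self-contained. Since the paper simply cites this as a recasting of Theorem~5.6 of~\cite{lpm2} and gives no proof of its own, there is no in-text argument to compare against, but your reconstruction is sound: the rank formula $r_M(\{e_1,\ldots,e_k\})=|\{i:a_i\leq e_k\}|$ follows exactly as you say (intervals $J_{t+1},\ldots,J_r$ miss the initial segment because their lower endpoints are too large, while $a_1,\ldots,a_t$ give a partial transversal inside it), and it pins down each $a_i$ as the first place the initial-segment rank reaches $i$. Applying the symmetric formula to final segments (equivalently, to the reversed path order, which the paper notes is also a path order) pins down each $b_i$. Both endpoint sequences are thus determined by $M$ and the path order alone, so the interval presentation is unique. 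One small point you could make explicit: the lower bound $r_M(S_k)\geq t$ uses that a partial transversal of a presentation is independent in the transversal matroid, which is standard but worth naming. You could also avoid invoking the reversed path order entirely by proving the dual rank formula $r_M(\{e_m,\ldots,e_n\})=|\{i:b_i\geq e_m\}|$ directly by the same two-sided counting; the outcome is the same.
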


The class $\mathcal{L}$ is easily seen to be closed under direct sums.
Connectivity can be determined readily from any interval
presentation~\cite[Theorem~3.5]{lpm2}.

\begin{prop}
  Given an interval presentation $\mathcal{A}$ of $M\in \mathcal{L}$
  as above, $M$ is connected if and only if $a_1=e_1$, $b_r=e_n$, and
  $a_{i+1}\leq b_i$ for all $i$ with $1\leq i<r$.
\end{prop}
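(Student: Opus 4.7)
The plan is to prove both directions separately: the forward implication by contrapositive, and the reverse by exhibiting a spanning circuit.

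For the forward direction, I would suppose that one of the three conditions fails and conclude that $M$ is disconnected. If $a_1 > e_1$ or $b_r < e_n$, then $e_1$ or $e_n$ lies in no $J_i$ and is therefore a loop, making $M$ disconnected whenever $|E(M)|\ge 2$ (as is forced here). If instead $a_{i+1} > b_i$ for some $i$, then $J_1,\dots,J_i$ lie entirely in $\{e_j : e_j \le b_i\}$ while $J_{i+1},\dots,J_r$ lie in $\{e_j : e_j \ge a_{i+1}\}$; these sets are disjoint, so $M$ splits as the direct sum of its restrictions to them (absorbing any intervening loops), contradicting connectedness.

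For the converse, assume all three conditions hold. First I would check that $M$ is loopless: given $e_k$, take the largest $i^*$ with $a_{i^*}\le e_k$, which exists because $a_1=e_1$; if $e_k\notin J_{i^*}$ then $e_k>b_{i^*}$, but $i^*=r$ contradicts $b_r=e_n$, and $i^*<r$ contradicts maximality via $a_{i^*+1}\le b_{i^*}<e_k$. Next I would exhibit a spanning circuit: the candidate is $C:=\{e_1,b_1,\dots,b_{r-1},e_n\}$, of size $r+1$. To show $C$ is a circuit, I would verify that every $r$-element subset is a basis by writing an explicit system of distinct representatives. The subset $C-\{e_1\}$ uses $b_i\to J_i$ with $e_n\to J_r$; the subset $C-\{e_n\}$ uses $e_1\to J_1$ with the shift $b_i\to J_{i+1}$, which is legal precisely because $a_{i+1}\le b_i$; and $C-\{b_j\}$ combines both, sending $e_1\to J_1$, $b_i\to J_{i+1}$ for $i<j$, $b_i\to J_i$ for $i>j$, and $e_n\to J_r$.

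Once $C$ is a spanning circuit in the loopless matroid $M$, connectedness follows by a standard argument: any nontrivial direct-sum decomposition $M=M_1\oplus M_2$ forces the circuit $C$ to lie in one summand, say $E(M_1)$, so $r(M_1)\ge|C|-1=r$, hence $r(M_2)=0$, and loopless\-ness forces $E(M_2)=\emptyset$. The main obstacle is the middle step: identifying $C$ as the right candidate and verifying the three families of SDRs, each of which hinges on the ``shift'' provided by $a_{i+1}\le b_i$ that lets an element representing one interval be re-routed to serve the next.
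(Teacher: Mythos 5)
Your proof is correct. The paper does not actually prove this proposition---it is quoted from \cite[Theorem~3.5]{lpm2}---so there is no argument here to compare against, but your treatment is complete and self-contained. The forward direction correctly observes that failure of $a_1=e_1$ or $b_r=e_n$ produces a loop in a matroid with at least two elements, and that $a_{i+1}>b_i$ splits the intervals into two blocks over disjoint parts of the ground set, so $M$ is a direct sum of two nonempty summands of positive rank. The converse exhibits the spanning circuit $C=\{e_1,b_1,\dots,b_{r-1},e_n\}$ (which has $r+1$ distinct elements when $r\ge 2$, since $e_1=a_1<a_2\le b_1<\dots<b_r=e_n$) and verifies that every $r$-subset is a transversal, the shifts $b_i\mapsto J_{i+1}$ being legal precisely because $a_{i+1}\le b_i$; together with the separate looplessness check, the standard spanning-circuit argument then forces connectedness. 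It is worth noting that your circuit is exactly $\{a_1,b_1,b_2,\dots,b_r\}$, the $i=1$, $x=b_1\in J_1\cap J_2$ instance of the family of spanning circuits the paper builds later in the proof of Proposition~\ref{prop:spanning}, so your method sits naturally alongside the paper's techniques. The only thread left loose is the degenerate case $|E(M)|\le 1$ (where $C$ is not a circuit), but connectedness there is vacuous, so this is not a real gap.
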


The class $\mathcal{L}$ is closed under minors~\cite[Theorem
3.1]{lpm2}.  Furthermore, given a path order of $M\in \mathcal{L}$ and
a minor $N$ of $M$, the induced order on $E(N)$ is a path order.
Thus, if $x\in E(N)$ is a terminal element of $M$, then $x$ is a
terminal element of $N$.

The next proposition gives the interval presentation of a
single-element contraction.  We first note that if a presentation
$\mathcal{A}$ is as given above, then the sets in $\mathcal{A}$ that
contain a particular non-loop $y$ are successive intervals
$J_{s},J_{s+1},\ldots,J_{t}$ for some $s$ and $t$.  The following
lemma recasts~\cite[Theorem 3.3]{lpm1}.

\begin{lemma}\label{lem:order}
  For $M\in \mathcal{L}$ and a given path order of $M$, let
  $\mathcal{A}$ be the corresponding interval presentation of $M$ as
  above.  If the elements of a basis of $M$ are $x_1<x_2<\cdots<x_r$,
  then $x_i\in J_i$ for $1\leq i\leq r$.
\end{lemma}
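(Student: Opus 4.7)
The plan is to argue by an uncrossing/swap argument on systems of distinct representatives. Since $\{x_1,x_2,\ldots,x_r\}$ is a basis of the transversal matroid $M$ presented by $\mathcal{A}=(J_1,J_2,\ldots,J_r)$, it is a transversal of $\mathcal{A}$, so there is some permutation $\sigma$ of $\{1,2,\ldots,r\}$ with $x_i\in J_{\sigma(i)}$ for each $i$. I would prove by induction on the number of inversions of $\sigma$ that one can choose $\sigma$ to be the identity, which is the claim.

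The heart of the argument is an uncrossing step. Suppose $\sigma$ has an inversion, that is, indices $i<j$ with $\sigma(i)>\sigma(j)$; write $k=\sigma(i)$ and $\ell=\sigma(j)$, so $\ell<k$. Since $J_\ell=[a_\ell,b_\ell]$ and $J_k=[a_k,b_k]$ with $a_\ell<a_k$ and $b_\ell<b_k$ (the antichain condition gives strict monotonicity of both endpoint sequences), and since $x_i<x_j$ in the path order, I would verify the four inequalities
\[
a_\ell<a_k\le x_i,\qquad x_i<x_j\le b_\ell,\qquad a_k\le x_i<x_j,\qquad x_j\le b_\ell<b_k,
\]
which together give $x_i\in J_\ell$ and $x_j\in J_k$. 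Hence one may redefine $\sigma$ by swapping its values at $i$ and $j$, strictly reducing the number of inversions while keeping a valid system of distinct representatives. Iterating, one reaches $\sigma=\mathrm{id}$, so $x_i\in J_i$ for every $i$.

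I do not expect a real obstacle here: once the setup as a transversal is in place, the proof is essentially the standard bubble-sort-on-SDRs argument, and the only ingredient specific to lattice path matroids is the simultaneous monotonicity of the $a_i$ and $b_i$, which is immediate from the antichain hypothesis in the definition of an interval presentation. The mildest care needed is in citing strict rather than weak monotonicity of the endpoints so the uncrossed permutation really has fewer inversions and the induction terminates.
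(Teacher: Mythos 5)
The paper itself gives no proof of Lemma~\ref{lem:order}; it only notes that the statement recasts Theorem~3.3 of the cited reference \cite{lpm1}. So there is nothing in this paper to compare your argument against directly, and I will simply assess your proof.

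Your uncrossing argument is correct. The key computation is sound: from $a_\ell < a_k \le x_i < x_j \le b_\ell < b_k$ you get both $x_i\in J_\ell$ and $x_j\in J_k$, so the transposed assignment is again a system of distinct representatives. The one point you state without justification is that transposing the values of $\sigma$ at an arbitrary inverted pair $(i,j)$ strictly decreases the inversion count; this is a standard fact (for $i<m<j$, the pairs $(i,m)$ and $(m,j)$ together lose either $0$ or $2$ inversions, and the pair $(i,j)$ loses exactly one), but you could avoid appealing to it by always choosing an \emph{adjacent} inversion, i.e.\ $j=i+1$ (one exists whenever $\sigma\ne\mathrm{id}$), in which case the decrease is exactly one. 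Alternatively, the whole induction can be bypassed by a direct counting argument: the $i$ elements $x_1<\cdots<x_i$ are matched to $i$ distinct intervals $J_p$ all having $a_p\le x_i$, and since the $a_p$ are strictly increasing this forces $a_i\le x_i$; the symmetric argument on $x_i<\cdots<x_r$ gives $x_i\le b_i$, whence $x_i\in J_i$. Either way, your proof is essentially complete as written.
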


\begin{prop}\label{prop:intervalcontract}
  Fix a path order of $M\in \mathcal{L}$; let the corresponding
  interval presentation be $\mathcal{A}$.  Assume $y\in E(M)$ is not a
  loop and let $J_{s},J_{s+1},\ldots,J_{t}$ be the sets of
  $\mathcal{A}$ that contain $y$.  The interval presentation of $M/y$
  for the induced path order is
  \begin{eqnarray*}
    \mathcal{A}' =
    \left\{
      \begin{array}{ll}
        (J_1,J_2,\ldots,J_{s-1},J_{s+1},\ldots,J_r), &\mbox{if $s=t$,}\\
        (J_1,J_2,\ldots,J_{s-1}, J'_s,J'_{s+1},\ldots,
        J'_{t-1},J_{t+1},\ldots J_r), &\mbox{if $s<t$,}
      \end{array}
    \right.  \label{fXp}
  \end{eqnarray*} 
  where $J'_i = (J_i\cup J_{i+1})-y$.  Also, if $x\in E(M) - (J_s\cap
  J_{s+1}\cap\cdots\cap J_t)$, then $x$ is in the same number of sets
  in $\mathcal{A}'$ as in $\mathcal{A}$.
\end{prop}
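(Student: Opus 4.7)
The plan is to handle the two cases $s=t$ and $s<t$ separately, with the bulk of the work in the latter (the former requires a simpler variant of the same arguments). In either case, I would first verify that $\mathcal{A}'$ is a genuine interval antichain in the induced path order on $E(M)-y$: for $s\le i<t$, the overlap $y\in J_i\cap J_{i+1}$ gives $J_i\cup J_{i+1}=[a_i,b_{i+1}]$, so $J'_i$ is an interval; the new left and right endpoint sequences remain strictly monotonic, and no interval contains another. Hence the transversal matroid $M'$ of $\mathcal{A}'$ is itself a lattice path matroid, so Lemma~\ref{lem:order} is available for $M'$ as well.

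Next I identify $M'$ with $M/y$ by matching bases. A basis of $M/y$ has the form $B-y$ where $B=\{x_1<\cdots<x_r\}$ is a basis of $M$ containing $y=x_k$, and Lemma~\ref{lem:order} combined with $y\in J_s\cap\cdots\cap J_t$ forces $k\in\{s,\ldots,t\}$. Routing $x_i$ to $J_i$ for $i<s$, to $J'_i$ for $s\le i<k$, to $J'_{i-1}$ for $k<i\le t$, and to $J_i$ for $i>t$ exhibits $B-y$ as a transversal of $\mathcal{A}'$. Conversely, for a transversal $X=\{z_1<\cdots<z_{r-1}\}$ of $\mathcal{A}'$, Lemma~\ref{lem:order} applied to $M'$ places $z_i$ in the $i$th interval of $\mathcal{A}'$; the inequalities $b_{s-1}<y<a_{t+1}$ (forced by $y\in J_s\cap\cdots\cap J_t$ together with $y\notin J_{s-1},J_{t+1}$) pin down the sorted position of $y$ in $X\cup\{y\}$, and the bounds $a_t\le y\le b_s$ let me verify that each entry $w_i$ of the sorted list $X\cup\{y\}=\{w_1<\cdots<w_r\}$ lies in $J_i$; the identity matching then witnesses $X\cup\{y\}$ as a basis of $M$, so $X$ is a basis of $M/y$.

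For the multiplicity claim, the sets of $\mathcal{A}$ containing a non-loop $x\ne y$ form a contiguous block of indices $[s_x,t_x]$, and a direct count gives the number of sets of $\mathcal{A}'$ containing $x$ as $|[s_x,t_x]\setminus[s,t]|+|[s_x-1,t_x]\cap[s,t-1]|$. Short case analysis on how $[s_x,t_x]$ meets $[s,t]$ shows this equals $|[s_x,t_x]|$ in every case except $[s,t]\subseteq[s_x,t_x]$, provided one rules out the ``strictly interior'' configuration $s<s_x\le t_x<t$. That configuration is impossible, because $x\notin J_{s_x-1}$ and $x\notin J_{t_x+1}$ would force $x>b_{s_x-1}\ge b_s\ge y$ and $x<a_{t_x+1}\le a_t\le y$ simultaneously, where the outer inequalities use $y\in J_s\cap J_t$.

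The main obstacle is the reverse direction of the basis identification: combining Lemma~\ref{lem:order} for the new presentation with the endpoint inequalities on $y$ to insert $y$ into the sorted transversal at the correct position requires careful index tracking. The impossibility argument in the previous paragraph, though short, is the other subtle point, since without it the multiplicity count could increase for elements strictly inside the $[s,t]$ block.
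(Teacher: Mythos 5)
Your proof is correct and follows the paper's route: both directions of the basis correspondence are driven by Lemma~\ref{lem:order}, with your explicit routing of elements to sets matching the paper's assignment, and your verification that $\mathcal{A}'$ is an interval antichain licensing the use of Lemma~\ref{lem:order} on the new presentation in the converse direction (a step the paper leaves implicit). You also fill in the multiplicity claim---which the paper dismisses as immediate---with the right supporting observation, namely that the strictly interior configuration $s<s_x\le t_x<t$ is impossible because $y\in J_s\cap J_t$ would then force both $x>y$ and $x<y$.
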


\begin{proof}
  The bases of $M/y$ are the sets $B\subseteq E(M)-y$ for which $B\cup
  y$ is a basis of $M$, so we are claiming that $B\cup y$ is a
  transversal of $\mathcal{A}$ if and only if $B$ is a transversal of
  $\mathcal{A}'$.  The case $s=t$ is immediate, so assume $s<t$.  Let
  $B=\{x_1,x_2,\ldots,x_{r-1}\}$ with
  $x_1<x_2<\cdots<x_k<y<x_{k+1}<\cdots<x_{r-1}$.  By
  Lemma~\ref{lem:order}, if $B\cup y$ is a transversal of
  $\mathcal{A}$, then (a) $x_i\in J_i$ for $1\leq i\leq k$, (b) $y\in
  J_{k+1}$, and (c) $x_i\in J_{i+1}$ for $k+1\leq i\leq r-1$.  Thus,
  $x_i\in J'_i$ for $s\leq i<t$, so $B$ is a transversal of
  $\mathcal{A}'$.  The converse follows with a similar argument upon
  noting that if $x_i<y$ and $x_i\in J'_h$, then $x_i\in J_h$ (note
  that $[a_{h+1},y]\subsetneq [a_h,y]$); likewise, if $y<x_i$ and
  $x_i\in J'_h$, then $x_i\in J_{h+1}$; thus, $y$ can represent
  $J_{k+1}$.  The last assertion is immediate.
\end{proof}

Given a presentation $\mathcal{A}$ of a transversal matroid $M$, we
get a presentation of $M\del e$ from $\mathcal{A}$ by removing $e$
from all sets.  For $M\in\mathcal{L}$, some adjustment may be needed
so that the presentation of $M\del e$ is an antichain; for our work,
it suffices to treat this when $e$ is not a loop and $e$ is either the
least element, $e_1$, or the greatest element, $e_n$.  If
$J_1=\{e_1\}$, then $(J_2,J_3,\ldots,J_r)$ is the interval
presentation of $M\del e_1$ for the induced path order.  If
$\{e_1\}\subsetneq J_1$, then $(J_1-e_1,J_2-e_2,\ldots, J_r-e_r)$ is
the interval presentation of $M\del e_1$ since, by
Lemma~\ref{lem:order}, $e_i$, the $(i-1)$-st element of $E(M\del
e_1)$, is not needed in the $i$-th set; note that, in this case, if
$e_i\in J_i$ with $1<i\leq r$, then $e_i$ is the lower endpoint of
$J_{i-1}-e_{i-1}$.  The interval presentation of $M\del e_n$ is
obtained similarly.

\begin{lemma}\label{lem:samenum}
  For $M\in \mathcal{L}$, let $x$ be in an interval $I$ in a given
  path order of $M$.  Let $\mathcal{A}$ be the corresponding
  interval presentation of $M$ and let $\mathcal{A}'$ be the induced
  interval presentation of the restriction $M|I$.  Either $x$ is an
  upper or lower endpoint of some interval in $\mathcal{A}'$ or $x$ is
  in the same number of intervals in $\mathcal{A}'$ as in
  $\mathcal{A}$.
\end{lemma}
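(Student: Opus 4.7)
The plan is to prove the lemma by induction on $|E(M)\setminus I|$, deleting one terminal element at a time; by the symmetry between the two ends of a path order, it suffices to analyze the deletion of $e_1$. A plain induction on the two-alternative statement will not close, because the conclusion ``$x$ is an endpoint of $\mathcal A'$'' needs to be tracked through subsequent deletions once the count has dropped. The cleanest fix is to prove a strengthened statement by the same induction: (a) if $x$ is an endpoint of some interval of $\mathcal A$, then $x$ is an endpoint of some interval of $\mathcal A'$; and (b) if $x$ is not an endpoint of $\mathcal A$, then either $x$ is an endpoint of $\mathcal A'$ or $x$ lies in the same number of intervals of $\mathcal A$ and $\mathcal A'$. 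Together these imply the lemma.

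The whole induction reduces to the one-step claim going from $\mathcal A$ to the induced presentation $\mathcal A_1$ of $M\del e_1$: endpoints persist, and non-endpoints keep their multiplicity. The arithmetic input throughout is the inequality $a_i\ge e_i$, which holds because $a_1<\cdots<a_i$ are $i$ strictly increasing elements of $\{e_1,\ldots,e_n\}$. When $J_1=\{e_1\}$, the remaining intervals are unchanged and $x\ne e_1$, so both halves are immediate. When $\{e_1\}\subsetneq J_1$ and $\mathcal A_1=(J_1-e_1,\ldots,J_r-e_r)$, the multiplicity of $x$ can drop only if $x=e_i\in J_i$ for some $i$; but $e_i\in J_i$ forces $a_i\le e_i$, which together with $a_i\ge e_i$ gives $x=a_i$, so $x$ was already an endpoint of $\mathcal A$. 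For the endpoint-preservation half with $x=a_k$, either $a_k>e_k$ and $J_k-e_k=J_k$ still has $x$ as lower endpoint, or $a_k=e_k$, and the remark preceding the lemma supplies $x=e_k$ as the lower endpoint of $J_{k-1}-e_{k-1}$. For $x=b_k$, one must rule out $J_k=\{e_k\}$ (which would make $J_k-e_k$ empty); but $J_k=\{e_k\}$ forces $a_{k-1}=b_{k-1}=e_{k-1}$ and iterating forces $J_1=\{e_1\}$, contradicting the case. Hence $b_k>e_k$, and removing $e_k$ from $J_k$ leaves the upper endpoint intact.

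Combining the one-step with the inductive hypothesis applied to $(M\del e_1,I)$ closes the argument: if $x$ is an endpoint of $\mathcal A$, the one-step puts it in $\mathcal A_1$ and (a) then puts it in $\mathcal A'$; otherwise the one-step preserves the count to $\mathcal A_1$, and (b) either produces an endpoint in $\mathcal A'$ or preserves the count further to $\mathcal A'$. The main obstacle is the case analysis for $\{e_1\}\subsetneq J_1$, in particular validating the hidden content of the remark — that $b_{i-1}\ge e_i$ whenever $a_i=e_i$, so that $e_i$ really is the new lower endpoint of $J_{i-1}-e_{i-1}$. This falls out of the same backward propagation that rules out $J_k=\{e_k\}$.
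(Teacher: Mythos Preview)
The paper states this lemma without proof; it is evidently meant to follow from the paragraph immediately preceding it, which describes the induced interval presentation of $M\del e_1$ (and, by symmetry, $M\del e_n$). Your argument does exactly that: iterate the single terminal-element deletion and track what happens to $x$. So your approach is the one the paper intends, and you have simply supplied the details.

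Your strengthening to (a) and (b) is the right move, and your one-step verification is sound: in the case $\{e_1\}\subsetneq J_1$, the multiplicity of $x$ in $(J_1-e_1,\ldots,J_r-e_r)$ drops only when $x=e_i\in J_i$, and then $a_i\ge e_i$ forces $x=a_i$; the backward propagation showing $J_k=\{e_k\}$ forces $J_1=\{e_1\}$ is correct and also justifies the paper's remark that $e_i$ becomes the lower endpoint of $J_{i-1}-e_{i-1}$.

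Two small points. First, your inductive wrap-up applies the inductive hypothesis (b) in the ``otherwise'' branch, but a non-endpoint $x$ of $\mathcal A$ can become an endpoint of $\mathcal A_1$ (e.g., $x=e_{i+1}$ when $a_i=e_i$ but $a_{i+1}>e_{i+1}$); in that sub-case you should invoke the inductive hypothesis (a) instead, which immediately gives that $x$ is an endpoint of $\mathcal A'$. This is a one-line fix. Second, the trivial case where the terminal element being deleted is a loop (so no interval changes) is not covered by your two cases, but it is immediate. With these cosmetic patches the proof is complete.
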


The next result recasts~\cite[Theorem~5.3]{lpm2}.  For a path order
$e_1<e_2<\cdots<e_n$ of $M$, the predecessor function
$p:E(M)-e_1\rightarrow E(M)-e_n$ and successor function
$s:E(M)-e_n\rightarrow E(M)-e_1$ are defined by $p(e_i)=e_{i-1}$ and
$s(e_i)=e_{i+1}$.

\begin{prop}\label{prop:fundfl}
  Let $M\in \mathcal{L}$ be connected; let a path order of $M$ and the
  corresponding interval presentation $\mathcal{A}$ be as above.  The
  fundamental flats of $M$ are the intervals
  \begin{itemize}
  \item[(i)] $[e_1,p(a_{j+1})]$ where $p(a_{j+1})>a_j$ (which has rank
    $j$) and
  \item[(ii)] $[s(b_k),e_n]$ where $s(b_k)<b_{k+1}$ (which has rank
    $r-k$).
  \end{itemize}
\end{prop}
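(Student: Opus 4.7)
My plan is to prove both directions of the characterization separately. The first (\emph{these intervals are fundamental flats}) will be handled by a concrete construction of a spanning circuit; the second (\emph{every fundamental flat has this shape}) will be handled by extracting strong structural constraints from the hypothesis via Lemma~\ref{lem:order}.

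Direction 1: I focus on type (i); type (ii) follows by reversing the path order (which, by the comment preceding Proposition~\ref{prop:uniqpath}, yields another path order of $M$). Fix $j$ with $p(a_{j+1}) > a_j$ and let $F = [e_1, p(a_{j+1})]$. Since $a_{j+1} > p(a_{j+1})$, only the intervals $J_1, \ldots, J_j$ meet $F$, restricting to $[a_i, \min(b_i, p(a_{j+1}))]$, an antichain interval presentation of $M|F$. The connectivity criterion (stated just before Proposition~\ref{prop:intervalcontract}) applies: $a_1 = e_1$ since $M$ is connected; $\min(b_j, p(a_{j+1})) = p(a_{j+1})$ because $b_j \geq a_{j+1}$; and the overlap conditions are inherited from those of $M$. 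Hence $M|F$ is connected of rank $j$. Since $a_1 < \cdots < a_j < p(a_{j+1})$ are $j+1$ distinct elements of $F$, $F$ is nontrivial; since $a_{j+1} \notin F$, $F$ is proper. As witnessing spanning circuit, take $C = \{a_1, \ldots, a_r, s(a_r)\}$, which is well-defined because $M$ connected with $|E(M)| \geq 2$ forces $a_r < e_n$. Using Lemma~\ref{lem:order} together with the connectivity $a_{i+1} \leq b_i$, I verify $C - x$ is a basis for every $x \in C$ (exhibiting explicit SDRs), so $C$ is a circuit that spans $M$. Because $a_i \in F$ exactly when $i \leq j$ and $s(a_r) > p(a_{j+1})$, $F \cap C = \{a_1, \ldots, a_j\}$ is a basis of $F$.

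Direction 2: Let $F$ be a fundamental flat of rank $k$, witnessed by a spanning circuit $C$; write $C = \{c_1 < c_2 < \cdots < c_{r+1}\}$ in path order. Applying Lemma~\ref{lem:order} to the bases $C - c_1$ and $C - c_{r+1}$ gives $c_i \in J_i$ and $c_{i+1} \in J_i$ for each $1 \leq i \leq r$, so consecutive elements of $C$ share an interval. Every $(k+1)$-subset of the circuit $C$ is independent (being a proper subset of $C$), so no $c_l$ outside $F \cap C$ lies in $F = \mathrm{cl}(F \cap C)$. The crux is to show $F \cap C$ is either the prefix $\{c_1, \ldots, c_k\}$ or the suffix $\{c_{r+2-k}, \ldots, c_{r+1}\}$ of $C$. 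I first argue that $F$ is an interval $[m_1, m_2]$ in the path order: $M|F$ is a connected LPM whose interval presentation is obtained from $\mathcal{A}$ by restriction, and this rules out path-order gaps in $F$. Consequently $F \cap C = \{c_m, c_{m+1}, \ldots, c_{m+k-1}\}$ is a consecutive block of $C$. I then rule out the middle case ($m \geq 2$ and $m + k - 1 \leq r$): there $c_{m-1}, c_m \in J_{m-1}$ with $c_{m-1} < m_1 \leq c_m$, and symmetrically $c_{m+k-1} \leq m_2 < c_{m+k}$ with both in $J_{m+k-1}$; chasing which intervals $J_i$ meet $F$ and which are absorbed into others after intersecting with $F$ forces the antichain interval presentation of $M|F$ to have at most $k-1$ intervals, contradicting $\mathrm{rank}(M|F) = k$.

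In the prefix case $F \cap C = \{c_1, \ldots, c_k\}$, the condition $c_{k+1} \notin F$ together with $c_{k+1} \in J_k \cap J_{k+1}$ identifies the upper endpoint of $F$ as $p(c_{k+1})$; applying Lemma~\ref{lem:order} to $\{c_1, \ldots, c_k\}$ viewed in the presentation of $M$ then pins down $c_{k+1} = a_{k+1}$, so $F = [e_1, p(a_{k+1})]$. The condition $p(a_{k+1}) > a_k$ required for type (i) is equivalent to $F$ being nontrivial (equivalently, $|F| > k$), which holds because $F$ is a pnc-flat. The suffix case gives type (ii) symmetrically. The main obstacle is the middle-case contradiction: it requires a delicate accounting of how $\mathcal{A}$ induces the antichain interval presentation of $M|F$, tracking the positions of the chains $(a_i)$ and $(b_i)$ relative to the endpoints $m_1$ and $m_2$ of $F$.
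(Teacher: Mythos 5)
The paper does not prove this result itself; it is a restatement of a theorem from the cited reference, so there is no in-paper proof to compare against. Judged on its own, your Direction 1 is essentially sound: the spanning circuit $C=\{a_1,\dots,a_r,s(a_r)\}$ does work, and $F\cap C=\{a_1,\dots,a_j\}$ is a basis of $F$. One repair is needed, though: you apply the connectivity criterion (the proposition preceding Proposition~\ref{prop:intervalcontract}) to the family $(J_i\cap F)_{i\le j}$, but that criterion is stated only for antichain interval presentations, and the restricted sets need not form an antichain. For instance, with $J_1=[1,6]$, $J_2=[2,7]$, $J_3=[5,8]$ and $j=2$ one gets $F=[1,4]$, $J_1\cap F=[1,4]\supsetneq[2,4]=J_2\cap F$. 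The fix is easy and more in the spirit of the rest of the argument: show directly that $\{a_1,\dots,a_j,p(a_{j+1})\}$ is a spanning circuit of $M|F$, which forces $M|F$ to be connected.

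Direction 2 has genuine gaps that your sketch does not close. First, the claim that $F$ is an interval in the path order is asserted with the justification that ``$M|F$ is a connected LPM whose interval presentation is obtained from $\mathcal A$ by restriction, and this rules out path-order gaps in $F$.'' It does not: the induced order on any subset of $E(M)$ is a path order of the corresponding restriction (the paper says so explicitly after Proposition~\ref{prop:uniqpath}), so the mere fact that $M|F$ is a connected LPM with restricted presentation is consistent with $F$ having gaps; you need a real argument specific to flats. Second, the middle-case contradiction (that $\{c_m,\dots,c_{m+k-1}\}$ cannot sit strictly inside $C$) is left as ``chasing which intervals $J_i$ meet $F$ \dots forces the antichain interval presentation of $M|F$ to have at most $k-1$ intervals'' --- you yourself flag this as the main obstacle, and no argument is supplied; having $\ge k+1$ of the sets $J_i\cap F$ nonempty while $r(M|F)=k$ is not by itself contradictory for a transversal matroid. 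Third, even in the prefix case, you assert without proof that $m_1=e_1$, that the upper endpoint is $p(c_{k+1})$, and that $c_{k+1}=a_{k+1}$; the last two require showing that no element $y$ with $c_k<y<a_{k+1}$ can lie outside $\cl(\{c_1,\dots,c_k\})$, and that no element strictly larger than $a_k$ and at most $p(a_{k+1})$ escapes that closure, which in turn needs the observation that any element $<a_{k+1}$ lies in $J_1,\dots,J_k$ only. So the architecture of the proof is reasonable, but the converse direction is, as written, an outline of claims rather than a proof.
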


\begin{cor}\label{cor:keepch}
  Assume $M\in \mathcal{L}$ is connected.  Let $e$ be a terminal
  element of $M$.  Let the fundamental flats of $M$ that contain $e$
  be $F_1\subsetneq F_2\subsetneq\cdots\subsetneq F_h$.  If
  $r(F_1)>1$, then $M/e$ is connected and $F_1-e, F_2-e,\ldots, F_h-e$
  are fundamental flats of $M/e$.
\end{cor}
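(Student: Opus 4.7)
The plan is to reduce to the case $e=e_1$, identify the fundamental flats containing $e_1$ via Proposition~\ref{prop:fundfl}, use $r(F_1)>1$ to force $a_2=e_2$, and then read off the interval presentation of $M/e_1$ from Proposition~\ref{prop:intervalcontract} to verify connectedness and identify the required fundamental flats. Since $e$ is terminal and reversing the path order gives another path order, we may assume $e=e_1$. In Proposition~\ref{prop:fundfl}, the type~(ii) fundamental flats $[s(b_k),e_n]$ all have $s(b_k)>e_1$, so none of them contains $e_1$. Hence $F_1,\ldots,F_h$ are all of type~(i): there exist indices $2\le j_1<j_2<\cdots<j_h\le r-1$ with $p(a_{j_\ell+1})>a_{j_\ell}$ and $F_\ell=[e_1,p(a_{j_\ell+1})]$, where the bound $j_1\ge 2$ is just the hypothesis $r(F_1)>1$.

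Next I would deduce $a_2=e_2$. Since the type~(i) condition fails at $j=1$, we have $p(a_2)\le a_1=e_1$; as $p(a_2)\in E(M)-e_1$, this forces $p(a_2)=e_1$, so $a_2=s(e_1)=e_2$. Because $a_1=e_1$ and the lower endpoints are strictly increasing, $e_1$ lies only in $J_1$. Applying Proposition~\ref{prop:intervalcontract} with $s=t=1$ gives the interval presentation $(J_2,J_3,\ldots,J_r)$ of $M/e_1$ for the induced path order $e_2<e_3<\cdots<e_n$. I then invoke the connectedness criterion for members of $\mathcal{L}$: the first lower endpoint of the new presentation equals $e_2$ (just verified), the last upper endpoint equals $e_n$ (inherited from $M$), and $a_{i+1}\le b_i$ for $2\le i\le r-1$ (inherited from $M$). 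Hence $M/e_1$ is connected.

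Finally, I apply Proposition~\ref{prop:fundfl} to $M/e_1$ with its presentation $(J_2,\ldots,J_r)$. For each $\ell$ we have $j_\ell\ge 2$, so $a_{j_\ell+1}\ge a_3>e_2$; consequently the predecessor function of $M/e_1$ agrees with $p$ at $a_{j_\ell+1}$, and the inequality $p(a_{j_\ell+1})>a_{j_\ell}$ that defined $F_\ell$ in $M$ is exactly the type~(i) condition at index $j_\ell$ for $M/e_1$. Therefore $[e_2,p(a_{j_\ell+1})]=F_\ell-e_1$ is a type~(i) fundamental flat of $M/e_1$. The crux of the argument is really just the observation that $r(F_1)>1$ is equivalent to $a_2=e_2$; everything else is bookkeeping with the two quoted propositions, so I do not expect a serious obstacle.
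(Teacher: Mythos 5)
Your proof is correct. The paper states this as a direct corollary of Proposition~\ref{prop:fundfl} without spelling out the argument, and your derivation — normalizing to $e=e_1$, using $r(F_1)>1$ to get $a_2=e_2$, reading off the presentation $(J_2,\ldots,J_r)$ of $M/e_1$ from Proposition~\ref{prop:intervalcontract}, and then checking the connectivity criterion and the type~(i) condition in $M/e_1$ — is exactly the intended computation.

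One small point worth flagging (it does not affect correctness): after passing to the $(r-1)$-tuple $(J_2,\ldots,J_r)$, the index at which Proposition~\ref{prop:fundfl}'s type~(i) condition must be checked in $M/e_1$ is $j_\ell-1$, not $j_\ell$, once the intervals are re-indexed from $1$ to $r-1$. Your phrase ``the type~(i) condition at index $j_\ell$ for $M/e_1$'' glosses over this shift, but you verify the correct inequality $p(a_{j_\ell+1})>a_{j_\ell}$ at the correct position, and the range $1\le j_\ell-1\le r-2$ is guaranteed by $j_\ell\ge 2$ and $j_\ell\le r-1$, so the conclusion stands.
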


The following result is~\cite[Corollary~5.8]{lpm2}.

\begin{prop}\label{prop:lpmauto}
  The automorphisms of a connected matroid in $\mathcal{L}$ are the
  permutations of the ground set that are rank-preserving bijections
  of the collection of fundamental flats.
\end{prop}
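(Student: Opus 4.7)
The forward direction is routine: a matroid automorphism preserves rank, flats, bases, circuits, connectedness of restrictions, and the property of intersecting a spanning circuit in a basis, so it sends fundamental flats to fundamental flats of the same rank.

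For the converse, let $\sigma$ be a rank-preserving bijection of the fundamental flats of a connected $M\in\mathcal{L}$. If $M$ has no fundamental flats at all, then Proposition~\ref{prop:fundfl} together with connectedness forces $a_1=\cdots=a_r=e_1$ and $b_1=\cdots=b_r=e_n$, so $M=U_{r,n}$ and every permutation of the ground set is an automorphism. Henceforth assume $M$ has at least one fundamental flat. Fix a path order of $M$. By Proposition~\ref{prop:fundfl} the fundamental flats form two chains, $\mathcal{F}_1$ of initial intervals (each containing $e_1$) and $\mathcal{F}_2$ of final intervals (each containing $e_n$); connectedness and properness of fundamental flats prevent any type-(i) flat from containing $e_n$ and vice versa. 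So $\sigma$ either preserves each chain or swaps them, and in either case $\sigma$ sends the pair $\{e_1,e_n\}$ (the common elements of $\mathcal{F}_1$ and $\mathcal{F}_2$) to itself.

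The central claim is that a connected $M\in\mathcal{L}$ is determined, as a labeled matroid on its ground set, by the data consisting of its fundamental flats with their ranks and its pair of terminal elements. Granting this, the push-forward matroid $\sigma(M)$ has the same fundamental flats as $M$ (as labeled subsets with the same ranks) and has the same pair of terminal elements (since the path order of $M$ pushes forward to a path order of $\sigma(M)$ with endpoints in $\{e_1,e_n\}$), so $\sigma(M)=M$, which is to say $\sigma$ is an automorphism. To establish the claim, I would invert Proposition~\ref{prop:fundfl}: fix one of the two orientations of the path order, say $e_1$ first. The type-(i) fundamental flat of rank $j$, when present, has maximum $p(a_{j+1})$ in the path order, so $a_{j+1}$ is the path-order successor of this maximum; ranks $j$ with no type-(i) flat force $a_{j+1}=s(a_j)$. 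Starting from $a_1=e_1$, this recovers the sequence $(a_i)$; symmetrically the type-(ii) chain recovers $(b_i)$. The interval presentation is then determined, and so $M$ itself is determined by Proposition~\ref{prop:uniqpath}.

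The main obstacle is the careful verification of the reconstruction: showing that the conditions of Proposition~\ref{prop:fundfl} admit only one pair of sequences $(a_i),(b_i)$ compatible with the given data, and that any residual flexibility in the path order within blocks not pinned down by fundamental flats is absorbed into local symmetries of the matroid. The degenerate cases (for example, a rank-one fundamental flat at a terminal element, which signals a parallel class there) also have to be treated carefully, as does the need to work with a possibly non-unique choice of path order compatible with the prescribed data.
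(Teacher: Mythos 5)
The paper itself does not prove this proposition; it cites it as Corollary 5.8 of the reference \cite{lpm2}, so there is no proof in this paper to compare against. Evaluating your argument on its own terms: the forward direction is fine, and the treatment of the no-fundamental-flat case reaches the right conclusion $M=U_{r,n}$ (though the intermediate claim that $a_1=\cdots=a_r=e_1$ is a slip; what Proposition~\ref{prop:fundfl} and connectedness actually force is that the $a_i$ are consecutive starting at $e_1$ and the $b_i$ are consecutive ending at $e_n$).

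The serious gap is the assertion that ``$\sigma$ sends the pair $\{e_1,e_n\}$ to itself.'' This is false. Take $M=P_3$ with path order $e_1<\cdots<e_6$, so the fundamental flats are $\{e_1,e_2,e_3\}$ and $\{e_4,e_5,e_6\}$, both of rank $2$. The transposition $\sigma=(e_1\,e_2)$ is a rank-preserving bijection of these two flats, yet $\sigma(\{e_1,e_6\})=\{e_2,e_6\}$. Your reasoning conflates ``$e_1$ lies in every flat of the chain $\mathcal{F}_1$'' with ``$\{e_1\}$ equals the intersection of $\mathcal{F}_1$''; the latter never holds, since pnc-flats are nontrivial and therefore have at least two elements. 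Because the terminal pair is not an invariant of the data you are given, your ``central claim'' (reconstruction of $M$ from fundamental flats with ranks \emph{plus} a terminal pair) cannot be applied to conclude $\sigma(M)=M$: the terminal pair of $\sigma(M)$ under the pushed-forward path order need not be $\{e_1,e_n\}$. The ``residual flexibility in the path order within blocks not pinned down by fundamental flats'' that you flag at the end is not a loose end to be tidied; absorbing it into ``local symmetries of the matroid'' is exactly the statement you are trying to prove, so the sketch becomes circular at precisely the hard point.

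A cleaner route avoids the path order and terminal elements altogether and uses only two tools already in the paper. By Proposition~\ref{prop:lpmchar}(iii)--(iv), the complete list of pnc-flats of a connected $M\in\mathcal{L}$, together with their ranks, is determined by the fundamental flats, their ranks, and cardinality data ($|E(M)|$, $r(M)$, $|F_i|$, $|G_j|$). Consequently, if $\sigma$ is a rank-preserving bijection of the collection of fundamental flats, then $\sigma(M)$ and $M$ are connected loopless matroids on the same ground set with the same nontrivial connected flats and the same rank function on those flats (including $E(M)$ itself). The first lemma of Section~\ref{sec:background}, asserting that a loopless matroid is determined by its nontrivial connected flats and their ranks, then gives $\sigma(M)=M$ directly, with no need to reconstruct an interval presentation or to track terminal elements.
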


The following result \cite[Theorem~5.10]{lpm2} characterizing
connected lattice path matroids plays a key role in our work.  We use
$\eta$ for the nullity function: $\eta(X)=|X|-r(X)$.

\begin{prop}\label{prop:lpmchar}
  A connected matroid $M$ is in $\mathcal{L}$ if and only if the
  properties below hold.
  \begin{itemize}
  \item[(i)] The fundamental flats of $M$ form at most two disjoint
    chains under inclusion, say $F_1\subsetneq
    F_2\subsetneq\cdots\subsetneq F_h$ and $G_1\subsetneq
    G_2\subsetneq\cdots\subsetneq G_k$.
  \item[(ii)] If $F_i\cap G_j\ne \emptyset$, then $F_i\cup G_j =
    E(M)$.
  \item[(iii)] The pnc-flats of $M$ other than $F_1, F_2,\ldots, F_h,
    G_1, G_2,\ldots, G_k$ are the intersections $F_i\cap G_j$ where
    $\eta(M)<\eta(F_i)+\eta(G_j)$.
  \item[(iv)] If $F_i\cap G_j$ is a pnc-flat, then $r(F_i\cap
    G_j)=r(F_i)+r(G_j)-r(M)$.
  \end{itemize}
\end{prop}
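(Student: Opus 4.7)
The plan is to prove both directions by exploiting Lemma~2.1: a loopless (hence connected) matroid is determined by its pnc-flats and their ranks. So in each direction it suffices to pin down the collection of pnc-flats and compare ranks.

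For the forward direction, I would start from a path order $e_1<e_2<\cdots<e_n$ and interval presentation $\mathcal{A}=(J_1,\ldots,J_r)$ of $M$, and use Proposition~\ref{prop:fundfl} to note that every fundamental flat is either an initial interval $[e_1,p(a_{j+1})]$ or a final interval $[s(b_k),e_n]$. Initial intervals form a chain $F_1\subsetneq\cdots\subsetneq F_h$ under inclusion, and similarly the final intervals form a chain $G_1\subsetneq\cdots\subsetneq G_k$. The two chains are disjoint because a fundamental flat is proper, so no single interval of $E(M)$ is simultaneously an initial and final interval; this gives (i). For (ii), an initial interval and a final interval intersect iff the final one starts no later than the initial one ends, which is equivalent to their union being $E(M)$.

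The heart of the forward direction is (iii) and (iv): I would show that every pnc-flat of $M$ is an interval in the path order by using Lemma~\ref{lem:samenum} together with the fact that connected flats are cyclic, and then observe that every such interval is either a fundamental flat or of the form $[e_1,p(a_{j+1})]\cap[s(b_k),e_n]=[s(b_k),p(a_{j+1})]$. A direct nullity count in the interval presentation, bounding how many $J_i$ each element of such an intersection lies in, yields the criterion $\eta(M)<\eta(F_i)+\eta(G_j)$ for connectedness of $F_i\cap G_j$, giving (iii), and a parallel rank count yields (iv).

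For the reverse direction, assume (i)--(iv) hold. The strategy is to construct a path order explicitly from the two chains. Declare an element $x$ to be of \emph{$F$-type} if $x$ lies in some $F_i$ but no $G_j$, of \emph{$G$-type} if $x$ lies in some $G_j$ but no $F_i$, and of \emph{mixed type} otherwise; by (ii), mixed elements are exactly those in some $F_i\cap G_j=E(M)$-intersection, which one analyzes using the rank formula in (iv). Assign each element its \emph{level} using the smallest $F_i$ (or $G_j$) containing it and order $E(M)$ by placing $F$-type elements first in increasing level, $G$-type elements last in decreasing level, and mixed elements in a middle block whose order is forced by (iv). From this order, define a candidate interval presentation $\mathcal{A}$ by reading off the fundamental flats of the candidate lattice path matroid $N$ predicted by Proposition~\ref{prop:fundfl} and filling in intermediate intervals. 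Then verify that the pnc-flats and ranks of $N$ match those of $M$ using (i)--(iv), and conclude $M=N\in\mathcal{L}$ by Lemma~2.1.

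The main obstacle is the reverse direction, specifically showing that the chain data alone determines a workable linear order. The forward step (iii) also requires care, since one must rule out pnc-flats that are neither fundamental nor intersections of fundamental flats, and this boils down to showing that intersections which fail the nullity inequality actually split as disjoint unions of connected pieces rather than forming a connected flat.
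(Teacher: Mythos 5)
The paper does not prove this proposition: it is quoted from \cite[Theorem~5.10]{lpm2} and used throughout as an external fact, so there is no in-paper argument for you to be compared against.

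Judged on its own, your outline has real gaps. In the forward direction, the load-bearing step is that every pnc-flat of $M$ is an interval in the path order, hence is an initial interval, a final interval, or an intersection of one of each. Lemma~\ref{lem:samenum} only counts how many presentation sets cover a point after restricting to a given interval; it does not identify which subsets are connected flats, and invoking ``connected flats of rank at least two are cyclic'' does not bridge the gap. Note also that the paper's own remark that pnc-flats are intervals (after Corollary~\ref{cor:spandel}) is derived \emph{from} this very proposition, so you cannot lean on it here. The nullity criterion in (iii) likewise needs a genuine two-way argument (why does $\eta(M)\ge\eta(F_i)+\eta(G_j)$ force $M|(F_i\cap G_j)$ to be disconnected, and why does strict inequality force connectivity?), not just the slogan of ``a direct nullity count.'' In the reverse direction, your trichotomy ($F$-type, $G$-type, mixed) omits the elements lying in no fundamental flat at all. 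Such elements occur even when fundamental flats exist: the interval presentation $[e_1,e_2],[e_2,e_4],[e_3,e_5]$ on $\{e_1,\ldots,e_5\}$ gives a connected lattice path matroid whose only fundamental flat is $[e_3,e_5]$, leaving $e_1,e_2$ in none, so your proposed path order is undefined in general. The phrase ``a middle block whose order is forced by (iv)'' also conceals the central difficulty of turning a rank identity into a consistent nesting of intervals when several $F_i\cap G_j$ are pnc-flats. Finally, to apply the opening lemma (a loopless matroid is determined by its nontrivial connected flats and their ranks) you must show the constructed $N$ has \emph{no} pnc-flats beyond those predicted by (i)--(iii); verifying that $M$'s pnc-flats reappear in $N$ is not enough, and the outline does not address ruling out extras.
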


Note that property (ii) precludes any inclusion among any fundamental
flats $F_i$ and $G_j$.

\begin{cor}\label{cor:lpmff=irr}
  The fundamental flats of a connected matroid in $\mathcal{L}$ are
  its irreducible pnc-flats.
\end{cor}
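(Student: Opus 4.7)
The plan is to leverage Proposition~\ref{prop:lpmchar} to enumerate the pnc-flats of a connected $M\in\mathcal{L}$ and then handle the two inclusions separately. Let the two chains of fundamental flats be $F_1\subsetneq\cdots\subsetneq F_h$ and $G_1\subsetneq\cdots\subsetneq G_k$ (one chain may be empty, in which case the statement becomes trivial since all pnc-flats are comparable).

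First I would dispatch the easy direction: if $P$ is a pnc-flat that is not fundamental, then by clause (iii) we have $P=F_i\cap G_j$ for some $i,j$, and by clause (ii) together with the note following the proposition, $F_i$ and $G_j$ are incomparable. Since $F_i$ and $G_j$ are themselves pnc-flats, $P$ is reducible.

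For the other direction, the key observation is the following consequence of clause (ii): no fundamental flat $F_i$ can be contained in any $G_m$, because $F_i\subseteq G_m$ would give $F_i\cap G_m=F_i\ne\emptyset$, forcing $F_i\cup G_m=E(M)$ and hence $G_m=E(M)$, contradicting that $G_m$ is proper; symmetrically no $G_j$ sits inside any $F_m$. Assuming now for contradiction that some $F_i$ is reducible, write $F_i=A\cap B$ with $A,B$ incomparable pnc-flats, and classify each of $A,B$ as one of three types: an $F_p$, a $G_p$, or a mixed pnc-flat $F_p\cap G_q$. Using that the $F$'s form a chain and the $G$'s form a chain, I would check that in every combination except ``both of type $F$'' the intersection $A\cap B$ is contained in some $G_m$ (for instance, if $A=F_a$ and $B=F_c\cap G_d$ then $A\cap B=F_{\min(a,c)}\cap G_d\subseteq G_d$, and analogous calculations handle the other combinations), violating the key observation; and if both $A$ and $B$ are of type $F$ they are comparable, contradicting our choice. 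By symmetry the same argument shows each $G_j$ is irreducible, completing the proof.

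The one place requiring care is the case analysis for the harder direction: I need to make sure I have not overlooked a combination where $A\cap B$ fails to sit inside some $G_m$ (or some $F_m$, when starting from a fundamental flat in the $G$-chain). Once the observation about ``$F_i\not\subseteq G_m$'' is in hand, however, each case is a one-line intersection computation in the two chains, so I expect no serious obstacle.
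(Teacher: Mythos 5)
Your proof is correct and is essentially the derivation the paper has in mind, since the corollary is stated as an immediate consequence of Proposition~\ref{prop:lpmchar} without an explicit argument. Both directions are handled soundly: the ``non-fundamental implies reducible'' direction follows directly from clauses~(ii) and~(iii), and for the converse your case analysis is exhaustive (the classification of a pnc-flat as $F$-type, $G$-type, or mixed is complete and unambiguous by clause~(iii) and the note after the proposition), with every incomparable pair $A,B$ other than two $F$'s yielding $A\cap B\subseteq G_m$ for some $m$, contradicting the observation that no $F_i$ lies in any $G_m$.
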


\begin{cor}\label{cor:notlpm}
  Let $F$ and $G$ be pnc-flats of $M\in\mathcal{L}$ that are not
  disjoint.  If $F\cup G$ spans $M$, then $F\cup G = E(M)$.
\end{cor}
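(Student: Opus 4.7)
The plan is to combine two preliminary reductions with the structural characterization in Proposition~\ref{prop:lpmchar}, then analyze cases on the types of the pnc-flats $F$ and $G$.

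First I would reduce to the case where $M$ is connected and loopless. Every flat contains all loops, but a restriction containing both a loop and a non-loop is disconnected; so a pnc-flat is either loopless or consists of a single loop, and the latter case is incompatible with $F \cup G$ spanning. Thus I may assume $M$ is loopless. Each pnc-flat, being connected, lies in one component of $M$; since $F$ and $G$ meet, they share a component $C$, and spanning $M$ forces $r(C) = r(M)$, so $M = C$ is connected. Second, I would observe that $F$ and $G$ are incomparable: if $F \subseteq G$, then $F \cup G = G$ is a proper flat and cannot span.

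I then invoke Proposition~\ref{prop:lpmchar} and Corollary~\ref{cor:lpmff=irr}: the fundamental flats form two chains $F_1 \subsetneq \cdots \subsetneq F_h$ and $G_1 \subsetneq \cdots \subsetneq G_k$, and every other pnc-flat is a middle intersection $F_i \cap G_j$ with $\eta(F_i) + \eta(G_j) > \eta(M)$. I split into three cases. If both $F$ and $G$ are fundamental, they cannot come from the same chain (else comparable), so they come from opposite chains and Proposition~\ref{prop:lpmchar}(ii) yields $F \cup G = E(M)$. If $F$ is fundamental and $G$ is middle, say $F = F_i$ and $G = F_{i'} \cap G_{j'}$ (the case $F = G_j$ is symmetric), I compare $F_i$ with $F_{i'}$ in their common chain: $F_{i'} \subseteq F_i$ gives $G \subseteq F$, contradicting incomparability, while $F_i \subsetneq F_{i'}$ gives $F \cup G \subseteq F_{i'}$, a proper flat, so $F \cup G$ cannot span.

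If both $F = F_i \cap G_j$ and $G = F_{i'} \cap G_{j'}$ are middle, the four subcases from comparing $F_i$ with $F_{i'}$ and $G_j$ with $G_{j'}$ split as follows. The two parallel-containment subcases give $F \subseteq G$ or $G \subseteq F$, ruled out by incomparability. In the two opposite-containment subcases, say $F_i \subseteq F_{i'}$ and $G_{j'} \subseteq G_j$, both $F$ and $G$ lie in $F_{i'} \cap G_j$. Monotonicity of $\eta$ (immediate from $r(Y) - r(X) \le |Y - X|$) upgrades the nullity condition $\eta(F_i) + \eta(G_j) > \eta(M)$ that makes $F$ a pnc-flat to $\eta(F_{i'}) + \eta(G_j) > \eta(M)$; since $F_{i'} \cap G_j \supseteq F \neq \emptyset$, Proposition~\ref{prop:lpmchar}(iii) makes $F_{i'} \cap G_j$ a pnc-flat, hence proper, so again $F \cup G$ cannot span.

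The main obstacle I anticipate is the last subcase, specifically verifying that $F_{i'} \cap G_j$ is a proper pnc-flat and not accidentally $E(M)$ or a fundamental flat. The note after Proposition~\ref{prop:lpmchar} precluding any inclusion between an $F_a$ and a $G_b$ handles this: $F_{i'} \cap G_j$ is strictly contained in each of $F_{i'}$ and $G_j$, so it cannot equal any fundamental flat, and as a pnc-flat it is proper.
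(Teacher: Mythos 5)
Your proof is correct, and it takes the approach the paper evidently intends: the result is stated as a corollary of Proposition~\ref{prop:lpmchar} with no separate proof, and a case analysis against that proposition's classification of pnc-flats is the natural way to establish it. Your reductions to the loopless, connected, incomparable case are sound, and your three main cases (both fundamental; one fundamental and one reducible; both reducible) together with the chain comparisons inside them do cover all possibilities.

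One small streamlining is worth noting: in your final subcase you go to some trouble to show $F_{i'}\cap G_j$ is itself a pnc-flat via monotonicity of $\eta$ and clause (iii), and you flag this as a potential obstacle. That work is unnecessary. You already have $F\cup G\subseteq F_{i'}\cap G_j\subseteq F_{i'}$, and $F_{i'}$ is a fundamental flat, hence a proper flat, so $r(F\cup G)\leq r(F_{i'})<r(M)$ and $F\cup G$ cannot span. The same observation trims Case 2: once $F\cup G\subseteq F_{i'}$ you are done immediately. More broadly, the whole case analysis collapses to the observation that every pnc-flat listed in Proposition~\ref{prop:lpmchar} is contained in $F_h$ or in $G_k$; if $F$ and $G$ both sit inside the same maximal fundamental flat they cannot jointly span, and in the remaining configuration each is forced to be a fundamental flat from its own chain, whereupon nonempty intersection and clause (ii) give $F\cup G=E(M)$. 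Your argument establishes exactly this, just in more explicit pieces.
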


The following result is a mild but useful extension
of~\cite[Theorem~3.3]{lpm2}.

\begin{prop}\label{prop:spanning}
  Let $M\in\mathcal{L}$ be connected and nontrivial.  Fix an interval
  presentation $\mathcal{A}$ of $M$ as above.  If $x$ is in at least
  two sets in $\mathcal{A}$ or $x\in\{a_1,b_r\}$, then $x$ is in a
  spanning circuit of $M$.
\end{prop}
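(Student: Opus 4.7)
The plan is to characterize the spanning circuits of $M$ in terms of the intervals $J_i$ and then exhibit one that contains the given $x$.

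By Lemma~\ref{lem:order}, a set $\{x_1<x_2<\cdots<x_r\}\subseteq E(M)$ is a basis of $M$ if and only if $x_i\in J_i$ for $1\le i\le r$; necessity is the lemma, and sufficiency holds because the $x_i$ are distinct and serve as representatives of the $J_i$.  A set $C$ of size $r+1$ is a spanning circuit exactly when every $r$-subset of $C$ is a basis, since then $C$ is dependent of rank $r$ while each $C-y$ is independent.  Writing $C=\{y_0<y_1<\cdots<y_r\}$ and reindexing the complement of each $y_j$, this unwinds to the conditions $y_0\in J_1$, $y_r\in J_r$, and $y_k\in J_k\cap J_{k+1}$ for $1\le k\le r-1$.

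Connectedness yields $a_1=e_1$, $b_r=e_n$, and $a_{k+1}\le b_k$, so $J_k\cap J_{k+1}=[a_{k+1},b_k]$ is nonempty.  Since $M$ is connected and nontrivial it has no coloops, so by Lemma~\ref{lem:order} no $J_i$ is a singleton; in particular $a_1<b_1$ and $a_r<b_r$.  If $x$ belongs to at least two sets of $\mathcal{A}$, those sets form a consecutive block $J_s,\ldots,J_t$ with $s<t$, so we choose $k\in\{1,\ldots,r-1\}$ with $x\in J_k\cap J_{k+1}$ and take
\[ C=\{a_1,a_2,\ldots,a_k,\,x,\,b_{k+1},b_{k+2},\ldots,b_r\}. \]
For $x=a_1$ take $C=\{a_1,a_2,\ldots,a_r,b_r\}$, and for $x=b_r$ take $C=\{a_1,b_1,\ldots,b_{r-1},b_r\}$.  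In each case, strict monotonicity of the $a_i$'s and $b_i$'s, together with $a_{k+1}\le b_k$, $a_1<b_1$, $a_r<b_r$, and (in the first case) $a_{k+1}\le x\le b_k$, shows that $C$ is listed in strictly increasing order and meets the conditions above, so $C$ is a spanning circuit containing $x$.

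The main obstacle is the spanning-circuit characterization in the first paragraph; once it is in hand the constructions are explicit and the verifications reduce to straightforward inequalities on the endpoints $a_i$ and $b_i$.
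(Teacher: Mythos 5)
Your proof is correct and follows essentially the same route as the paper's: in the case $x\in J_i\cap J_{i+1}$ you build the very same set $C=\{a_1,\ldots,a_i,x,b_{i+1},\ldots,b_r\}$ and verify each $r$-subset is a transversal of $\mathcal{A}$. You are in fact slightly more thorough, since you make the spanning-circuit characterization explicit and separately handle the cases $x=a_1$ and $x=b_r$, which the paper's proof leaves implicit.
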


\begin{proof}
  Since $M$ is connected, $a_h\in J_{h-1}\cap J_h$ if $h>1$; also,
  $b_k\in J_k\cap J_{k+1}$ if $k<r$.  Thus, if $x\in J_i\cap J_{i+1}$,
  then each $r$-subset of
  $C=\{a_1,a_2,\ldots,a_i,x,b_{i+1},\ldots,b_r\}$ is a transversal of
  $\mathcal{A}$ and hence a basis of $M$, so $C$ is a spanning
  circuit.
\end{proof}

Note that $M\del x$ is connected if some spanning circuit of $M$ does
not contain $x$.  This applies if $x\in F_1-\{a_1,a_2,\ldots,a_r\}$
where $F_1$ is the smallest fundamental flat that contains the least
element $e_1$.  If all pairs of incomparable fundamental flats of $M$
are disjoint, then, by Proposition~\ref{prop:lpmauto}, the
automorphism group of $M$ is transitive on $F_1$.  These observations
give the following result.

\begin{cor}\label{cor:safedel}
  Assume $M\in \mathcal{L}$ is connected and has at least one
  fundamental flat.  If all pairs of incomparable fundamental flats of
  $M$ are disjoint, then for any element $x$ in a smallest fundamental
  flat of $M$, the deletion $M\del x$ is connected.
\end{cor}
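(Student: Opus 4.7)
Fix a smallest fundamental flat $F$ of $M$ and an element $x\in F$; the strategy is to transport $x$ by an automorphism of $M$ to an element whose deletion is already known to be connected. First I would normalize the path order. By Proposition~\ref{prop:fundfl}, every fundamental flat is an interval of the form $[e_1,p(a_{j+1})]$ or $[s(b_k),e_n]$, so $F$ contains exactly one of the two terminal elements of $M$. Reversing the path order if necessary, I may assume $F$ contains the least element $e_1$; then $F$ is the smallest flat in its chain of fundamental flats, and in particular it is precisely the flat $F_1$ appearing in the paragraph just before the corollary.

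Next I would verify that $F_1-\{a_1,a_2,\ldots,a_r\}$ is nonempty and then invoke the observation already made there that $M\setminus x'$ is connected for every $x'$ in this set. Nonemptiness is a counting check: $F_1=[e_1,p(a_{j+1})]$ has rank $j$, so the only $a_i$'s that lie in $F_1$ are $a_1,\ldots,a_j$, while $|F_1|>j$ since $F_1$ is a nontrivial flat. Pick any $x'$ in this set; by Proposition~\ref{prop:spanning} together with the discussion preceding the corollary, some spanning circuit of $M$ avoids $x'$, so $M\setminus x'$ is connected.

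Finally I would exploit the hypothesis on incomparable fundamental flats to transfer connectivity from $M\setminus x'$ to $M\setminus x$. By Proposition~\ref{prop:lpmchar}(i) the fundamental flats form at most two chains $F_1\subsetneq\cdots\subsetneq F_h$ and $G_1\subsetneq\cdots\subsetneq G_k$, and under the disjointness hypothesis each $G_j$ (if present) is disjoint from $F_1$. Consequently every element of $F_1$ belongs to exactly the same collection of fundamental flats, namely $F_1,F_2,\ldots,F_h$. Hence any permutation of $F_1$ extended by the identity on $E(M)-F_1$ preserves every fundamental flat setwise and preserves ranks, so by Proposition~\ref{prop:lpmauto} it is an automorphism of $M$. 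Choosing one that sends $x$ to $x'$ gives $M\setminus x\cong M\setminus x'$, which is connected.

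The only step that takes any real care is the nonemptiness of $F_1-\{a_1,\ldots,a_r\}$ and the verification that the induced permutations of $F_1$ really do preserve the full collection of fundamental flats; both follow from the structural results already recorded, so I do not expect a substantive obstacle.
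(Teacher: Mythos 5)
Your proof is correct and follows essentially the same route as the paper: the paper's proof is the discussion immediately preceding the corollary, which combines the spanning-circuit observation for $x'\in F_1-\{a_1,\ldots,a_r\}$ with transitivity of the automorphism group on $F_1$ (via Proposition~\ref{prop:lpmauto}) under the disjointness hypothesis. You have merely filled in the details the paper leaves implicit, namely the nonemptiness of $F_1-\{a_1,\ldots,a_r\}$ and the verification that permutations of $F_1$ preserve the fundamental-flat collection.
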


We will often use the following observations along with
Proposition~\ref{prop:spanning}.  If $M$ is connected and $y$ is in
the spanning circuit $C$ of $M$, then $C-y$ is a spanning circuit of
$M/y$.  Thus, if, in addition, $\cl(y)=\{y\}$, then $M/y$ is
connected.

Lemma~\ref{lem:samenum} and Proposition~\ref{prop:spanning} have the
following corollary.

\begin{cor}\label{cor:spandel}
  For $M\in \mathcal{L}$, let $x$ be in an interval $I$ in a given
  path order of $M$ and let $\mathcal{A}$ be the corresponding
  interval presentation of $M$.  If $M|I$ is connected and $x$ is in
  at least two intervals in $\mathcal{A}$, then $x$ is in a spanning
  circuit of $M|I$.
\end{cor}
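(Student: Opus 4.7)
The plan is to combine Lemma~\ref{lem:samenum} and Proposition~\ref{prop:spanning}, applied to the matroid $M|I$ with its induced interval presentation. Let $\mathcal{A}'=(J'_1,\ldots,J'_{r'})$ be the interval presentation of $M|I$ obtained by restricting $\mathcal{A}$ to $I$, and write $J'_i=[a'_i,b'_i]$. Since $M|I$ is connected, we have $a'_{i+1}\le b'_i$ for all $i$, which is the feature that will let us convert endpoint information into multiplicity information.

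By Lemma~\ref{lem:samenum}, exactly one of two things happens for the given $x$: either $x$ appears in the same number of intervals of $\mathcal{A}'$ as of $\mathcal{A}$, or $x$ is an upper or lower endpoint of some interval in $\mathcal{A}'$. In the first case, the hypothesis that $x$ lies in at least two intervals of $\mathcal{A}$ transfers directly to $\mathcal{A}'$, and Proposition~\ref{prop:spanning} applied to the connected matroid $M|I$ immediately places $x$ in a spanning circuit of $M|I$.

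The second case is where I expect the small amount of real work: I have to translate ``endpoint'' into a hypothesis that Proposition~\ref{prop:spanning} accepts. Write $x=a'_j$ or $x=b'_k$. If $j=1$ or $k=r'$, then $x\in\{a'_1,b'_{r'}\}$ and Proposition~\ref{prop:spanning} applies directly. Otherwise, the connectedness inequalities $a'_j\le b'_{j-1}$ and $a'_{k+1}\le b'_k$ force $x\in J'_{j-1}\cap J'_j$ in the first subcase and $x\in J'_k\cap J'_{k+1}$ in the second, so $x$ lies in at least two intervals of $\mathcal{A}'$, and again Proposition~\ref{prop:spanning} finishes.

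The only mild obstacle is bookkeeping in this last subcase, namely observing that an interior endpoint of a connected interval presentation automatically lies in two consecutive intervals; everything else is a direct citation of the tools already established.
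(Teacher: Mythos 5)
Your proof is correct and takes precisely the approach the paper indicates: the paper gives no explicit argument, remarking only that the corollary follows from Lemma~\ref{lem:samenum} and Proposition~\ref{prop:spanning}, and you supply exactly that derivation, with the key observation being that an interior endpoint $a'_j$ (with $j>1$) or $b'_k$ (with $k<r'$) of the induced presentation must, by the connectivity inequalities $a'_{i+1}\le b'_i$, lie in two consecutive intervals $J'_{j-1}\cap J'_j$ or $J'_k\cap J'_{k+1}$.
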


This corollary applies, for instance, if $I$ is a pnc-flat since, by
Propositions~\ref{prop:fundfl} and~\ref{prop:lpmchar}, such flats are
intervals in any path order.

We will also use the following result~\cite[Corollary 5.5]{lpm2}.
 
\begin{prop}\label{prop:dualfund}
  If $M\in\mathcal{L}$, then $M^*\in\mathcal{L}$.  If $M$ is also
  connected, then the fundamental flats of $M^*$ are the set
  complements of the fundamental flats of $M$.
\end{prop}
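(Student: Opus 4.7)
The plan is to reduce to the connected case and then construct an explicit interval presentation of $M^*$ from one for $M$. Since $\mathcal{L}$ is closed under direct sums and duality commutes with direct sums, it suffices to prove both claims when $M$ is connected (the fundamental-flats statement then transfers to each connected component separately). So assume $M$ is connected with path order $e_1<e_2<\cdots<e_n$ and corresponding interval presentation $\mathcal{A}=(J_1,\ldots,J_r)$, where $J_i=[a_i,b_i]$ and the connectedness inequalities $a_1=e_1$, $b_r=e_n$, $a_{i+1}\le b_i$ hold.

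Next I would construct an interval presentation $\mathcal{A}^*=(K_1,\ldots,K_{n-r})$ of $M^*$ with respect to the same path order by specifying, for each $j$, the leftmost and rightmost positions at which the $j$-th ``East step'' can sit in any lattice path whose $i$-th ``North step'' position lies in $[a_i,b_i]$. These endpoints $a^*_j$ and $b^*_j$ are determined by the standard bijection between bases of $M$ and lattice paths in a planar region. I then verify that a subset $T=\{y_1<\cdots<y_{n-r}\}$ of $E(M)$ is a transversal of $\mathcal{A}^*$ if and only if $E(M)-T=\{x_1<\cdots<x_r\}$ is a transversal of $\mathcal{A}$: by Lemma~\ref{lem:order} applied to both presentations, this reduces to showing that the inequalities $a_i\le x_i\le b_i$ for all $i$ and the inequalities $a^*_j\le y_j\le b^*_j$ for all $j$ are equivalent for complementary sets of the required sizes. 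Since the bases of $M^*$ are the complements of the bases of $M$, this yields $M^*\in\mathcal{L}$.

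For the second claim I would apply Proposition~\ref{prop:fundfl} to $\mathcal{A}^*$ to list the fundamental flats of $M^*$ as the initial and terminal intervals of the path order determined by the $a^*_j$ and $b^*_j$, and then verify by a direct computation using the definitions of these endpoints that each such interval is the set complement in $E(M)$ of one of the fundamental flats of $M$ given by Proposition~\ref{prop:fundfl}, and conversely. The main obstacle is pinning down the dual presentation $\mathcal{A}^*$ with enough precision to drive both the transversal correspondence and the fundamental-flat matching; the combinatorial heart of the argument is the symmetry of the lattice path picture under reflecting North and East steps across the diagonal, but executing this rigorously requires careful bookkeeping of the $a^*_j$ and $b^*_j$ in terms of the $a_i$ and $b_i$.
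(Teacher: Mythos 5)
The paper does not prove Proposition~\ref{prop:dualfund} at all; it imports it verbatim from Bonin--de Mier~\cite[Corollary~5.5]{lpm2}, so there is no ``paper's own proof'' to compare against. Your strategy---reduce to the connected case, fix a path order, build a dual interval presentation $\mathcal{A}^*$ by reflecting North and East steps across the diagonal, and then read off the fundamental flats of $M^*$ from Proposition~\ref{prop:fundfl}---is the standard route and is conceptually correct. A small caveat: the direct-sum reduction is needed only for the claim $M^*\in\mathcal{L}$; the fundamental-flats claim is stated only for connected $M$ (fundamental flats require a spanning circuit), so there is nothing to ``transfer'' componentwise there.

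The proposal is, however, incomplete on exactly the point it flags as the ``main obstacle.'' You never actually define the endpoints $a^*_j$ and $b^*_j$, nor verify that the resulting $(K_1,\ldots,K_{n-r})$ is a nonempty antichain of intervals in the same path order, nor carry out the combinatorial identity asserting that $a_i\le x_i\le b_i$ for all $i$ is equivalent to $a^*_j\le y_j\le b^*_j$ for all $j$ when $\{x_1<\cdots<x_r\}$ and $\{y_1<\cdots<y_{n-r}\}$ are complementary. That equivalence is the entire content of the first claim and the input to the second. (For the record, the requisite translation is of the form: $x_i\le b_i$ for all $i$ iff $y_j\ge$ the least position where fewer than $j$ East steps are forced, and dually for $a_i\le x_i$; and one must also note that Lemma~\ref{lem:order} supplies only the forward implication, while the trivial identity matching gives the converse for interval presentations.) As it stands this is a correct plan rather than a proof; the missing steps are routine bookkeeping, not a conceptual gap, but they are precisely what a proof of this proposition must supply.
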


Matroids having interval presentations $\mathcal{A}$ as above where
either $\{a_1,a_2,\ldots,a_r\}$ or $\{b_1,b_2,\ldots,b_r\}$ is an
interval in the path order are \emph{nested matroids} (called
generalized Catalan matroids in~\cite{lpm2}).  Let $\mathcal{C}$ be
the class of these matroids.  A connected matroid in $\mathcal{L}$ is
nested if and only if its fundamental flats form a chain.  The
following related result is essentially Lemma 2 of~\cite{opr}.

\begin{prop}\label{prop:chainflats}
  A loopless matroid is in $\mathcal{C}$ if and only if its pnc-flats
  form a chain.
\end{prop}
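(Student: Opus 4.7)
The plan is to handle both directions through Proposition~\ref{prop:lpmchar}, after first reducing to the connected case. If $M$ has two connected components each of size at least two, those components are incomparable pnc-flats, so the chain condition fails; and in any linear order the elements of each such component occupy disjoint blocks, each forcing strict inequalities that leave gaps in both the $\{a_i\}$ and $\{b_i\}$ sequences of any interval presentation of $M$, so $M$ is not nested. Hence both sides of the biconditional force $M$ to be a connected loopless matroid $M_1$ together with some coloops; since coloops are trivial they contribute no pnc-flats, and they can be appended at either end of any path order of $M_1$ without disturbing whether $\{a_i\}$ or $\{b_i\}$ is an interval. So we may assume $M$ is connected.

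For the forward direction, if $M\in\mathcal{C}$ is connected then its fundamental flats form a single chain, by the characterization of nested matroids recalled just before the statement. Apply Proposition~\ref{prop:lpmchar} with this one chain $F_1\subsetneq\cdots\subsetneq F_h$ and empty second chain: condition~(iii) produces no further pnc-flats (there are no intersections $F_i\cap G_j$ to take), so the pnc-flats of $M$ are exactly $F_1,\ldots,F_h$, a chain.

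For the converse, assume $M$ is connected and loopless with pnc-flats $H_1\subsetneq\cdots\subsetneq H_m$. If $m=0$, one checks directly that $M=U_{r,n}$: absence of nontrivial rank-$1$ connected flats makes $M$ simple, and every proper flat of rank at least $2$ must then be disconnected hence free, which forces every set of size less than $r(M)$ to be independent, so $M$ is uniform and hence nested. For $m\geq 1$ the chain condition immediately gives that no pnc-flat is reducible, so in view of Proposition~\ref{prop:lpmchar} it suffices to show that every $H_i$ is a fundamental flat, i.e.\ that some spanning circuit $C$ of $M$ satisfies that $H_i\cap C$ is a basis of $H_i$; this renders conditions~(ii)--(iv) vacuous and (i) the single chain $H_1\subsetneq\cdots\subsetneq H_m$, giving $M\in\mathcal{L}$, and then $M\in\mathcal{C}$ by the cited characterization.

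The main obstacle is supplying these spanning circuits without already having an interval presentation in hand. I would attack this by induction on $|E(M)|$: pick a careful element $x$ (say in $H_1$ when $r(H_1)\geq 2$, or in $E(M)-H_m$ otherwise), use Lemma~\ref{lem:stayconn} and standard flat-tracking to verify that $M/x$ or $M\setminus x$ remains connected and loopless with pnc-flats still forming a chain, apply induction to obtain its interval presentation and hence spanning circuits, and lift the resulting circuits back to $M$ by reversing the contraction/deletion step of Proposition~\ref{prop:intervalcontract}. The subtle point is certifying that no new incomparable pnc-flats sprout in the minor under the chosen move.
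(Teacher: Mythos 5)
The paper does not actually prove this proposition; it simply attributes it to [opr] (``essentially Lemma~2 of \cite{opr}''), so you are attempting something the paper outsources. Your overall route---reduce to the connected case, then verify the four conditions of Proposition~\ref{prop:lpmchar} with a single chain and an empty second chain, and then invoke the sentence ``a connected matroid in $\mathcal{L}$ is nested if and only if its fundamental flats form a chain''---is a sensible internal derivation, and the forward direction and the $m=0$ case are handled correctly. You also correctly identify that the entire content of the converse, once condition (iii) is parsed with $k=0$, is the claim that \emph{every} $H_i$ is a fundamental flat.

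That claim, however, is precisely where your argument stops. What you offer is a plan (``induction on $|E(M)|$... pick a careful element... verify the minor remains connected and loopless with pnc-flats still forming a chain... lift the resulting circuits back''), and you yourself flag its weakest joint: ``certifying that no new incomparable pnc-flats sprout in the minor.'' That is not a loose end that ``standard flat-tracking'' disposes of---it is the heart of the matter. Two concrete problems: contracting an element of $H_1$ can disconnect the matroid (think of a parallel connection at that element), so the inductive hypothesis ``connected'' is not automatic; and in the deletion branch, if $E(M)-H_m$ is a single element $e$, then $M\backslash e = M|H_m$, whose spanning circuits are not spanning circuits of $M$, so the ``lift'' step has no content. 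The reduction to the connected case is also stated too loosely (coloops cannot in general be appended ``at either end''; they must go at the end compatible with whichever of $\{a_i\}$ or $\{b_i\}$ is the interval, and you have not justified that in \emph{every} path order of a disconnected LPM with two nontrivial components those components occupy disjoint blocks). In short: your framework is right, but the key sub-claim that chains of pnc-flats consist entirely of fundamental flats is asserted, not proved, and closing that gap is essentially the work of [opr]'s Lemma~2, which the paper cites instead of reproving.
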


Let $P_n$ be $T_n(U_{n-1,n}\oplus U_{n-1,n})$, the truncation to rank
$n$ of the direct sum of two $n$-circuits. Thus, $P_n$ is the rank-$n$
paving matroid whose only pnc-flats are two disjoint
circuit-hyperplanes whose union is the ground set.  The following
result is from~\cite{opr}.

\begin{prop}\label{prop:oxprow}
  A matroid is in $\mathcal{C}$ if and only if it has no $P_n$-minor
  for any $n\geq 2$.
\end{prop}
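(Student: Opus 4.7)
The plan is to prove each direction separately. For the forward implication, observe that $\mathcal{C}$ is minor-closed (as noted in the discussion preceding the proposition) and that the two circuit-hyperplanes of $P_n$ are disjoint pnc-flats of $P_n$: each is a connected, nontrivial flat (in fact a spanning circuit of the corresponding restriction) of rank $n-1$, and being disjoint they are incomparable. By Proposition~\ref{prop:chainflats}, $P_n \notin \mathcal{C}$, so no member of $\mathcal{C}$ can have a $P_n$-minor.

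For the converse I would argue the contrapositive: assume $M \notin \mathcal{C}$ and construct a $P_n$-minor. Since loops may be deleted without affecting either hypothesis, I may assume $M$ is loopless; by Proposition~\ref{prop:chainflats} there then exist incomparable pnc-flats $F$ and $G$ of $M$. I would proceed in three stages:
\begin{itemize}
\item[(i)] Replace $M$ by $M|(F\cup G)$; since $F,G$ are contained in $F\cup G$ and $M|F, M|G$ are unaffected by this restriction, they persist as incomparable pnc-flats, and now $E(M)=F\cup G$.
\item[(ii)] For each $z\in F\cap G$ in turn, remove $z$ by either contraction or deletion in such a way that the images of $F$ and $G$ remain incomparable pnc-flats, eventually arriving at $F\cap G=\emptyset$.
\item[(iii)] With $F,G$ disjoint and $E(M)=F\cup G$, contract within $F$ and within $G$ to reduce the rank of each to $1$, producing two disjoint parallel classes of sizes $\geq 2$; then restrict to two elements of each to obtain $U_{1,2}\oplus U_{1,2}=P_2$.
\end{itemize}

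The main obstacle is stage (ii). For $z\in F\cap G$, the flat condition on the image is automatic ($F-z$ is always a flat of $M/z$, and is a flat of $M\del z$ because no element of a connected matroid on $\geq 2$ elements is a coloop, so $\cl_M(F-z)=F$). What is not automatic is that $(M|F)\del z$ or $(M|F)/z$ (and similarly for $G$) remains connected. Using the standard facts that for a connected matroid $N$ with $|E(N)|\geq 2$, the deletion $N\del e$ is connected unless $e$ belongs to a $2$-cocircuit of $N$, while $N/e$ is connected unless $e$ has a parallel partner in $N$, one does a case analysis on whether $z$ has parallel or series partners in $M|F$ versus $M|G$. When both matroids have the same ``preferred'' operation at $z$ (both want deletion, or both want contraction), that operation succeeds; the hard case is when the preferred operations conflict, and I would handle it by adopting a minor-minimal counterexample framework and ruling out such asymmetric configurations structurally, thereby forcing $F\cap G=\emptyset$ (and, in fact, driving $M$ directly to the form $P_n$). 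Stages (i) and (iii) are routine once this central obstruction is cleared.
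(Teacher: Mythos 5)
The paper does not prove this proposition; it cites it directly from Oxley, Prendergast, and Row~\cite{opr}, so there is no in-paper proof to compare against. Your forward direction is fine: the two disjoint circuit-hyperplanes of $P_n$ are incomparable pnc-flats, so Proposition~\ref{prop:chainflats} gives $P_n\notin\mathcal{C}$, and $\mathcal{C}$ is minor-closed.

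The converse, however, contains genuine gaps. Stage~(ii) is not an argument: declaring that a ``minor-minimal counterexample framework'' will ``rule out such asymmetric configurations structurally'' names the difficulty without resolving it, and the local parallel/series case analysis you sketch does not explain why the preferred operations at $z$ must agree across $M|F$ and $M|G$, nor what to do when they conflict. Stage~(iii) is wrong as stated. If $F$ and $G$ are disjoint flats with $F\cup G=E(M)$ but $r(F)+r(G)>r(M)$, then contracting inside $F$ changes the rank of $G$, because the contracted set $X\subseteq F$ satisfies $r_{M/X}(G)=r_M(G\cup X)-|X|$, which drops even though $G\cap X=\emptyset$. Test this on $P_3$ itself with $F=\{f_1,f_2,f_3\}$ and $G=\{g_1,g_2,g_3\}$: contracting $f_1$ makes $\{f_2,f_3\}$ a parallel pair but also makes $G$ spanning of rank two, and contracting $g_1$ next collapses the matroid to $U_{1,4}$, not to $U_{1,2}\oplus U_{1,2}$. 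In fact $P_2$ is not a minor of $P_3$ at all (which is exactly why the whole family $\{P_n:n\geq 2\}$ is needed as excluded minors rather than just $P_2$), so aiming specifically for a $P_2$-minor is the wrong target. Your plan only works in the direct-sum case $r(F)+r(G)=r(M)$; in the case $r(F)+r(G)>r(M)$ you must produce some $P_n$ with $n>2$, and the contractions must be chosen so that \emph{both} $F$ and $G$ remain circuit-hyperplanes of the quotient, an argument the proposal does not supply.
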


\subsection{Parallel connections}
For our purposes, the next result~\cite[Proposition 7.1.13]{ox} can be
taken as the definition of the \emph{parallel connection}
$P_x(M_1,M_2)$ of matroids $M_1$ and $M_2$ using basepoint $x$.  The
special case $P_x(M,U_{1,2})$ is the \emph{parallel extension} of $M$
at $x$.

\begin{prop}\label{prop:pcbases}
  Assume that $M_1$ and $M_2$ are matroids with $E(M_1)\cap E(M_2)=
  \{x\}$ and $r_{M_1}(x)+r_{M_2}(x)>0$.
  \begin{enumerate}
  \item A set $B\subseteq E(M_1)\cup E(M_2)$ with $x\in B$ is a basis
    of $P_x(M_1,M_2)$ if and only if $B\cap E(M_1)$ is a basis of
    $M_1$ and $B\cap E(M_2)$ is a basis of $M_2$.
  \item A set $B\subseteq E(M_1)\cup E(M_2)$ with $x\not\in B$ is a
    basis of $P_x(M_1,M_2)$ if and only if, for either $(i,j)=(1,2)$
    or $(i,j)=(2,1)$, the set $B\cap E(M_i)$ is a basis of $M_i$ and
    $\bigl(B\cap E(M_j)\bigr)\cup x$ is a basis of $M_j$.
  \end{enumerate}
\end{prop}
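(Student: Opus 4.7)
The plan is to start from the standard definition of $P_x(M_1,M_2)$ via its circuits (as in Oxley's book): the circuits are (a) the circuits of $M_1$ avoiding $x$, (b) the circuits of $M_2$ avoiding $x$, and (c) sets of the form $(C_1-x)\cup(C_2-x)$ where $C_i$ is an $M_i$-circuit containing $x$. Under the hypothesis $r_{M_1}(x)+r_{M_2}(x)>0$, a short rank computation gives $r\bigl(P_x(M_1,M_2)\bigr)=r(M_1)+r(M_2)-1$ when $x$ is a non-loop in both matroids; the case in which $x$ is a loop in exactly one of them is easier and handled similarly. With this in hand, both parts reduce to translating ``maximal independent'' into the stated conditions on $B_i := B\cap E(M_i)$.

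For part (1), since $x\in B$ lies in both $B_1$ and $B_2$, we have $|B|=|B_1|+|B_2|-1$. I would argue both implications through the same circuit analysis. If each $B_i$ is a basis of $M_i$, then $B$ contains no circuit of type (a) or (b); moreover, any type-(c) circuit in $B$ would force some $M_i$-circuit through $x$ to lie in $B_i$, contradicting independence. Hence $B$ is independent in $P_x(M_1,M_2)$, and since $|B|=r(M_1)+r(M_2)-1=r\bigl(P_x(M_1,M_2)\bigr)$, it is a basis. Conversely, if $B$ is a basis, the same circuit-avoidance forces each $B_i$ to be independent in $M_i$, and then the rank identity forces each $B_i$ to be a basis.

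For part (2), with $x\notin B$, the union $B=B_1\sqcup B_2$ is disjoint. The key observation is that avoiding all type-(c) circuits in $B$ is equivalent to saying it is \emph{not} the case that $x$ lies in $\cl_{M_1}(B_1)\cap\cl_{M_2}(B_2)$; thus $B$ is independent in $P_x(M_1,M_2)$ if and only if both $B_i$ are independent in $M_i$ and, for at least one index $j\in\{1,2\}$, the augmented set $B_j\cup x$ is still independent in $M_j$. A cardinality bound $|B|=|B_1|+|B_2|\leq r(M_1)+r(M_2)-1$, obtained from $|B_j\cup x|\leq r(M_j)$ for that $j$ together with $|B_i|\leq r(M_i)$ for the other index $i$, then shows $|B|\leq r\bigl(P_x(M_1,M_2)\bigr)$, with equality precisely when the stated basis condition in the proposition holds.

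I expect the main obstacle to be the asymmetric case analysis in part (2): correctly tracking which side of the parallel connection absorbs $x$ into the ``basis extension'' requires careful bookkeeping of the closures on each side, and one must verify that the two cases $(i,j)=(1,2)$ and $(i,j)=(2,1)$ are not redundant but do cover every possibility. Once part (1) is in hand, however, part (2) is essentially a matter of counting after subtracting off the structure already classified by the type-(c) circuits.
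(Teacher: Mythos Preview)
The paper does not prove this proposition: it is quoted from \cite[Proposition~7.1.13]{ox} and, as the sentence introducing it says, is \emph{taken as the definition} of $P_x(M_1,M_2)$ for the purposes of the paper. So there is no proof in the paper to compare yours against.

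Your route via the circuit description is reasonable, but the circuit list you state is incomplete. The circuits of $P_x(M_1,M_2)$ are \emph{all} circuits of $M_1$, \emph{all} circuits of $M_2$, together with the sets $(C_1\cup C_2)-x$ where $x\in C_i\in\mathcal{C}(M_i)$; you have omitted the circuits of $M_i$ that pass through $x$. (What you wrote is essentially the circuit list of the $2$-sum, which lives on $E(M_1)\cup E(M_2)-x$.) This matters: with your list, $P_x(U_{1,2},U_{1,2})$ on $\{x,a,b\}$ would have only the circuit $\{a,b\}$ and hence rank~$2$, whereas the actual parallel connection is $U_{1,3}$ of rank~$1$. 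The omission is harmless in part~(2), since with $x\notin B$ those extra circuits cannot lie in $B$ anyway; but in the converse direction of part~(1) you need them to conclude that each $B_i$ is independent in $M_i$ (your list only rules out $M_i$-circuits that avoid $x$). Once the circuit description is corrected, the remainder of your outline---the rank formula, the cardinality count, and the closure criterion governing type-(c) circuits---goes through as written.
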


We will use the result below~\cite[Proposition 7.1.15]{ox} on minors
of parallel connections.

\begin{prop}\label{prop:pcminors}
  For $y\in E(M_1)- x$, we have $P_x(M_1,M_2)\del y = P_x(M_1\del
  y,M_2)$ and $P_x(M_1,M_2)/y = P_x(M_1/y,M_2)$.  Also,
  $P_x(M_1,M_2)/x = (M_1/x)\oplus (M_2/x)$.
\end{prop}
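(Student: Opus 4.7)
The plan is to prove each of the three identities by checking that both sides have the same bases, using the basis characterization of parallel connections in Proposition~\ref{prop:pcbases} on each side.

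For the deletion identity with $y\in E(M_1)-x$: since $y\ne x$, we have $r_{M_1\del y}(x)=r_{M_1}(x)$, so the rank hypothesis of Proposition~\ref{prop:pcbases} still applies to $(M_1\del y, M_2)$. A basis $B$ of $P_x(M_1,M_2)\del y$ is a basis of $P_x(M_1,M_2)$ avoiding $y$. Splitting on whether $x\in B$ and applying Proposition~\ref{prop:pcbases}, the defining conditions become conditions on $B\cap E(M_1)=B\cap E(M_1\del y)$ being a basis of $M_1\del y$ (with or without $x$ adjoined, depending on the case) and on $B\cap E(M_2)$ being a basis of $M_2$ (with or without $x$); these match the basis description of $P_x(M_1\del y, M_2)$.

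For the contraction identity with $y\in E(M_1)-x$: if $y$ is a loop of $M_1$, then $y$ is a loop of $P_x(M_1,M_2)$ and $M_1/y=M_1\del y$, so this case reduces to the deletion identity. Assume $y$ is not a loop. Bases of $P_x(M_1,M_2)/y$ are precisely the sets $B-y$ where $B$ is a basis of $P_x(M_1,M_2)$ containing $y$; note that $y\in B$ iff $y\in B\cap E(M_1)$. Splitting again on whether $x\in B$ and using Proposition~\ref{prop:pcbases}, the conditions on $B\cap E(M_1)$ (a basis of $M_1$, possibly with $x$ adjoined, containing $y$) translate to the condition that $(B\cap E(M_1))-y$ is a basis of $M_1/y$ (possibly with $x$ adjoined); combined with the unchanged condition on $B\cap E(M_2)$, this is the basis description of $P_x(M_1/y,M_2)$.

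For $P_x(M_1,M_2)/x$: the hypothesis $r_{M_1}(x)+r_{M_2}(x)>0$ ensures $x$ is not a loop of at least one $M_i$. In the generic case, $x$ is a non-loop in both, and $P_x(M_1,M_2)$ has rank $r(M_1)+r(M_2)-1$; Proposition~\ref{prop:pcbases}(1) gives bases $B$ containing $x$ as disjoint unions $B_1\cup B_2$ with $B_i$ a basis of $M_i$ containing $x$, so $B-x = (B_1-x)\sqcup(B_2-x)$ is a disjoint union of a basis of $M_1/x$ and a basis of $M_2/x$, which are exactly the bases of $(M_1/x)\oplus (M_2/x)$. If $x$ is a loop of one factor, say $M_2$, then Proposition~\ref{prop:pcbases}(1) yields no bases containing $x$, so $x$ is a loop of $P_x(M_1,M_2)$; bases then arise only from Proposition~\ref{prop:pcbases}(2) with $(i,j)=(2,1)$, giving the same direct-sum decomposition after using $M_2/x=M_2\del x$. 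The main obstacle is nothing conceptual but just this case bookkeeping: each identity splits along whether $x$ lies in the basis, and one must verify that degenerate loop situations still align correctly on both sides.
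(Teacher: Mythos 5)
The paper itself offers no proof of this proposition; it is quoted as Proposition 7.1.15 from Oxley's \emph{Matroid Theory}, so there is no internal argument to compare yours against. Your strategy of verifying all three identities by matching bases through Proposition~\ref{prop:pcbases} is a sound and natural route, and the contraction identity and the $P_x(M_1,M_2)/x$ identity are handled correctly, including the loop caveats you raise at the end.

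There is, however, a genuine gap in the deletion case. You assert that a basis of $P_x(M_1,M_2)\del y$ is a basis of $P_x(M_1,M_2)$ avoiding $y$. That characterization fails precisely when $y$ is a coloop of $P_x(M_1,M_2)$, which (for $y\in E(M_1)-x$) happens exactly when $y$ is a coloop of $M_1$: by Proposition~\ref{prop:pcbases}, every basis $B$ of the parallel connection has $B\cap E(M_1)$ or $\bigl(B\cap E(M_1)\bigr)\cup x$ equal to a basis of $M_1$, so a coloop of $M_1$ lies in every basis of $P_x(M_1,M_2)$. In that situation no basis of $P_x(M_1,M_2)$ avoids $y$, yet $P_x(M_1,M_2)\del y$ certainly has bases, and the rank drops by one on both sides of the claimed equality; the bases do still match, but via $B\mapsto B-y$ rather than the containment you describe, so a short separate check is needed. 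You flag ``degenerate loop situations'' at the close, but only for $x$ being a loop of one factor, not for $y$ being a coloop of $M_1$; this is the one case your argument as written does not cover.
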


Clearly $P_x(M_1,M_2) =P_x(M_2,M_1)$, so the analogous results hold
for $y\in E(M_2)-x$.

The next result~\cite[Theorem 7.1.16]{ox} gives an important link
between connectivity and parallel connection.

\begin{prop}\label{prop:discon} 
  Let $M$ be a connected matroid with $x\in E(M)$.  If $M/x=M_1\oplus
  M_2$, then $M = P_x\bigl(M\del E(M_2),M\del E(M_1)\bigr)$;
  furthermore, both $M\del E(M_2)$ and $M\del E(M_1)$ are connected.
\end{prop}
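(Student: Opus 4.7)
The plan is to verify the identity $M = P_x(N_1, N_2)$ by matching bases via Proposition~\ref{prop:pcbases} and then to deduce connectedness of $N_1$ and $N_2$ by a separator argument exploiting $M/x = M_1\oplus M_2$. Write $E_i = E(M_i)$, $N_1 = M\del E_2$, and $N_2 = M\del E_1$. Connectedness of $M$ (in the nontrivial case $|E(M)|\geq 2$) forces $x$ to be a non-loop, so $x$ is a non-loop in both $N_1$ and $N_2$ and $P_x(N_1,N_2)$ is defined; using $N_i/x = (M/x)|E_i = M_i$, one has $r(N_i) = r(M_i)+1$ and $r(M) = r(M_1)+r(M_2)+1$.

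For the identity $M = P_x(N_1, N_2)$, compare bases on the common ground set. If $x \in B$, both ``$B$ is a basis of $M$'' and item~(1) of Proposition~\ref{prop:pcbases} reduce to the condition ``$(B-x)\cap E_i$ is a basis of $M_i$ for $i=1,2$'' via $M/x = M_1 \oplus M_2$ and $N_i/x = M_i$. If $x \notin B$, set $B_i = B \cap E_i$; rank arithmetic in $M/x$ forces $r_{M_i}(B_i) = r(M_i)$ for both $i$, and $|B_1|+|B_2| = r(M)$ splits the sizes as $\{|B_1|, |B_2|\} = \{r(M_1), r(M_2)+1\}$. Say $|B_i| = r(M_i)$ and $|B_j| = r(M_j)+1$; then $B_i$ is a basis of $M_i$, so $B_i\cup x$ is a basis of $N_i$, while $B_j$ is independent in $M$ with $|B_j|=r(N_j)$ and $r_{N_j}(B_j)=r(N_j)$ and hence a basis of $N_j$. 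This is exactly item~(2) of Proposition~\ref{prop:pcbases}. The converse runs the same arithmetic in reverse: assuming the conditions of Proposition~\ref{prop:pcbases}, one gets $r_M(B\cup x) = r_{M/x}(B)+1 = r(M)$ and $x \in \cl_M(B)$ (because the basis of the relevant $N_\ell$ spans $x$), whence $r_M(B) = r(M) = |B|$, so $B$ is a basis of $M$.

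For connectedness of $N_1$, suppose it admits a nontrivial separator $A$. By swapping $A$ with its complement in $E(N_1)$, assume $x\notin A$, so $A\subseteq E_1$. Applying the direct-sum rank formula in $M/x$ to $(E_1-A)\cup E_2$ together with $r_M(Y\cup x) = r_{M/x}(Y)+1$ for $x\notin Y$ gives
\[
r_M(E(M)-A) = r_{N_1}(E(N_1)-A) + r(M_2).
\]
Substituting into the separator identity $r_{N_1}(A) + r_{N_1}(E(N_1)-A) = r(N_1)$ and using $r(M) = r(N_1)+r(M_2)$ then yields $r_M(A) + r_M(E(M)-A) = r(M)$, so $A$ separates $M$, contradicting the hypothesis; $N_2$ is symmetric.

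The main obstacle is the bookkeeping in the ``$x\notin B$'' case: both directions require aligning rank identities across $M$, $M/x$, $N_i$, and $M_i$, and the asymmetry between the two $B_i$ (one a basis of $M_i$, the other spanning $M_j$ with one extra element) must be tracked carefully; once these translations are in hand, the separator computation for connectedness is a single short application of the identity $r_M(Y\cup x) = r_{M/x}(Y)+1$.
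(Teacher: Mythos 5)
The paper does not prove Proposition~\ref{prop:discon}; it is quoted directly from Oxley's \emph{Matroid Theory} as~\cite[Theorem 7.1.16]{ox}, so there is no in-paper proof to compare against. Your argument is a correct, self-contained proof of the cited result.

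A few remarks on the substance. The base-matching half is handled cleanly: in the $x\in B$ case the reduction to ``$(B-x)\cap E_i$ is a basis of $M_i$'' via $N_i/x=M_i$ and $M/x=M_1\oplus M_2$ is exactly the right bookkeeping; in the $x\notin B$ case the size count $|B_1|+|B_2|=r(M_1)+r(M_2)+1$ together with $r_{M_i}(B_i)=r(M_i)$ (which follows since $B$ spans $M/x$) correctly forces the asymmetric split, and your observations that $B_i\cup x$ is a basis of $N_i$ while $B_j$ (being independent in $M$ of size $r(N_j)$) is a basis of $N_j$ close the forward direction; the converse rank chase using $x\in\cl_M(B)$ is also right. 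The connectedness argument is the standard rank-equality computation: the identity $r_M(E(M)-A)=r_{N_1}(E(N_1)-A)+r(M_2)$ follows from $r_M(Y\cup x)=r_{M/x}(Y)+1$ and the direct-sum rank formula, and substituting into $r_{N_1}(A)+r_{N_1}(E(N_1)-A)=r(N_1)$ with $r(M)=r(N_1)+r(M_2)$ yields a separator of $M$. One very small point: when you write $\{|B_1|,|B_2|\}=\{r(M_1),r(M_2)+1\}$ this implicitly fixes one of the two symmetric assignments, but you resolve it in the next sentence, so this is cosmetic rather than a gap.
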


\section{The Excluded Minors of Lattice Path
  Matroids}\label{sec:exlpm}

The excluded minors of $\mathcal{L}$ are given in
Theorem~\ref{thm:list} and illustrated in Figure~\ref{exminfig}.  (All
appeared in~\cite{lpm2}.)  That these matroids are not in
$\mathcal{L}$ follows readily from Proposition~\ref{prop:lpmchar};
checking that their proper minors are in $\mathcal{L}$ is not
difficult.  Thus, we focus on proving that these are the only excluded
minors.

We start with several points of notation.  The free extension and
coextension of $M$ by $e$ are denoted $M+e$ and $M\times e$,
respectively.  Besides the matroids $P_n$ in
Proposition~\ref{prop:oxprow}, a family of matroids that plays an
important role in this work is $P'_n= P_{n-1}\times e$ for $n\geq 3$.
Equivalently, $P'_n= T_n\bigl(P_e(U_{n-1,n},U_{n-1,n})\bigr)$, the
truncation to rank $n$ of the parallel connection of two $n$-circuits.

\begin{thm}\label{thm:list}
  A matroid is a lattice path matroid if and only if it has none of
  the following matroids as minors:
  \begin{enumerate}
  \item $A_n = P'_n+x$, for $n\geq 3$,
  \item $B_{n,k}=T_n(U_{n-1,n}\oplus U_{n-1,n}\oplus U_{k-1,k})$ and
    its dual $C_{n+k,k}$, for $n\geq k\geq 2$,
  \item $D_n = (P_{n-1}\oplus U_{1,1})+x$ and its dual $E_n$, for
    $n\geq 4$,
  \item the rank-$3$ wheel, $\mathcal{W}_3$, the rank-$3$ whirl,
    $\mathcal{W}^3$, and
  \item the matroid $R_3$ and its dual $R_4$ (see
    Figure~\ref{exminfig}).
  \end{enumerate}
\end{thm}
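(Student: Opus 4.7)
That the listed matroids lie outside $\mathcal{L}$ is asserted by the author to follow readily from Proposition~\ref{prop:lpmchar}: one checks for each that its collection of fundamental/pnc-flats violates one of (i)--(iv). That their proper minors lie in $\mathcal{L}$ is a finite check, and, together with minor-closure of $\mathcal{L}$, gives the forward implication. The rest of the plan targets the converse: if $M \notin \mathcal{L}$ is minor-minimal, then $M$ must appear on the list.

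First I would reduce to the case $M$ connected. Since $\mathcal{L}$ is closed under direct sums, a disconnected $M$ would have each component in $\mathcal{L}$ (by minor-minimality) and so would lie in $\mathcal{L}$ itself. I would also exploit Proposition~\ref{prop:dualfund} to dualize when convenient, halving the work: the pairs $(B_{n,k},C_{n+k,k})$, $(D_n,E_n)$, $(R_3,R_4)$ in the statement, and the self-duality of the families in (1), (4), reflect exactly this symmetry.

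Now apply Proposition~\ref{prop:lpmchar} to $M$: one of (i)--(iv) must fail. The main organizing principle is that every proper minor of $M$ does satisfy (i)--(iv), so the obstruction in $M$ is ``locally minimal.'' I would split into cases by which property fails. If (i) fails, $M$ has three pairwise incomparable irreducible pnc-flats (Corollary~\ref{cor:lpmff=irr}); using Proposition~\ref{prop:oxprow} on appropriate restrictions and contractions, plus Proposition~\ref{prop:fundfl} to track fundamental flats through minors, one drives $M$ down to a $P_n$-like core and then argues that minor-minimality forces $M$ itself into one of $A_n$, $B_{n,k}$, $C_{n+k,k}$, $\mathcal{W}_3$, or $\mathcal{W}^3$ according to whether the three flats are pairwise disjoint and the ranks involved. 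If (ii)--(iv) fails, two fundamental flats $F,G$ from different chains either meet without spanning, or their intersection has the wrong rank, or produce an extra pnc-flat; using Propositions~\ref{prop:intervalcontract} and~\ref{prop:discon} together with Corollaries~\ref{cor:safedel} and~\ref{cor:spandel} to prune $E(M) - (F \cup G)$ while preserving the defect, one is forced into $D_n$, $E_n$, $R_3$, or $R_4$, with the small paving cases accounting for the wheels/whirls.

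The main obstacle is the precise minor reduction: at each step one must delete or contract an element that (a) keeps the result connected (via Proposition~\ref{prop:spanning} and Corollary~\ref{cor:safedel}), (b) preserves the failing condition of Proposition~\ref{prop:lpmchar}, and yet (c) eventually bottoms out at a \emph{named} matroid on the list rather than some generic non-LPM. Balancing these three demands, and in particular isolating the small sporadic cases $R_3, R_4, \mathcal{W}_3, \mathcal{W}^3$ from the infinite families while using duality to avoid redoing the $B/C$ and $D/E$ sides, is where the bulk of the technical work will lie.
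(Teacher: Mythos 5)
Your outline captures the right goal but contains a conceptual gap: you invoke Corollary~\ref{cor:lpmff=irr}, Proposition~\ref{prop:fundfl}, and Proposition~\ref{prop:oxprow} as though they describe $M$ directly, but those govern matroids in $\mathcal{L}$ (or in $\mathcal{C}$), while $M\notin\mathcal{L}$. A large part of the actual proof is spent establishing analogues of these facts for excluded minors themselves: that fundamental flats coincide with irreducible pnc-flats (Lemma~\ref{lem:f=ir}), that complements of fundamental flats are fundamental in $M^*$ (Lemma~\ref{lem:compfun}), and that fundamental flats pass correctly to single-element deletions and contractions (Corollaries~\ref{cor:ff2}, \ref{cor:confun1}, \ref{cor:confun2}). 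None of these come for free, and your sketch does not account for proving them.

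The second omission is the parallel-connection pipeline, which is the engine of the whole argument. You mention Proposition~\ref{prop:discon} only as a pruning tool, but the paper pairs it with a detailed analysis (Lemma~\ref{lem:lpmpc2}) showing that a parallel connection of two LPMs at a badly placed basepoint always has one of $B_{n,2}$, $C_{4,2}$, $E_n$, $R_3$, $R_4$ as a minor. This is what first proves that $M\del x$, $M/x$, and $M\del x/y$ are all connected for $M\in\mathcal{E_L}-\mathcal{E}$ (Lemmas~\ref{lem:minorcon}, \ref{lem:minorcon2}), connectivity being exactly what makes the fundamental-flat lemmas above work. The paper's organization is also opposite to yours: rather than splitting by which of (i)--(iv) of Proposition~\ref{prop:lpmchar} fails, it builds enough structure on $M$ to verify all four and derive the contradiction $M\in\mathcal{L}$, with the named minors surfacing at specific checkpoints (e.g., $\mathcal{W}_3$, $\mathcal{W}^3$, $B_{n,k}$, $C_{n+k,k}$ in Lemma~\ref{lemma:no3} on three incomparable fundamental flats, and $A_n$ in the proof of property (ii), not property (i) as your grouping suggests).
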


\begin{figure}
\begin{center}
\includegraphics[width = 4.75truein]{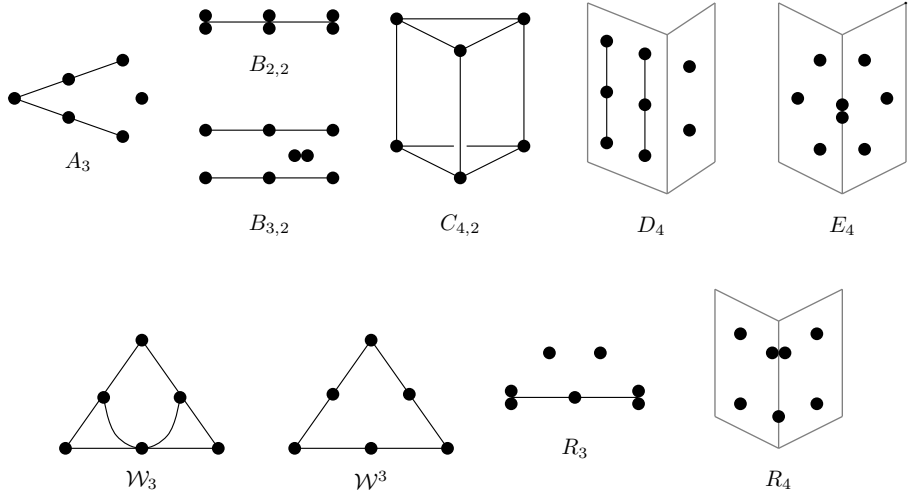}
\end{center}
\caption{The excluded minors of lattice path matroids.  Those in the
  top row are in infinite families of excluded
  minors.}\label{exminfig}
\end{figure}

Note that $A_n$ is self-dual.  The matroid $C_{n+k,k}$ is a paving
matroid of rank $n+k$; its ground set can be partitioned into sets
$X,Y,Z$ with $|X| = |Y| = n$ and $|Z| = k$ so that the only nontrivial
hyperplanes are $X\cup Y$, $X\cup Z$, and $Y\cup Z$, two (or all, if
$n=k$) of which are circuits.  In $E_n$, the element $x$ is in a
$2$-circuit and $E_n\del x=P'_n$

Let $\mathcal{E_L}$ be the set of excluded minors of $\mathcal{L}$ and
let $\mathcal{E}$ be the set of those in items (1)--(5) of
Theorem~\ref{thm:list}.  Since $\mathcal{L}$ is closed under direct
sums, matroids in $\mathcal{E_L}$ are connected.  Since $\mathcal{L}$
is closed under duality, $M\in \mathcal{E_L}$ if and only if $M^*\in
\mathcal{E_L}$.  We prove $\mathcal{E_L}-\mathcal{E} = \emptyset$ in
the subsections below.  We first describe these subsections, thereby
outlining the proof.
\begin{itemize}
\item[(3.1)] For $M_1,M_2\in\mathcal{L}$ with $E(M_1)\cap
  E(M_2)=\{x\}$, we determine whether $P_x(M_1,M_2)$ is in
  $\mathcal{L}$; we show that unless some simple sufficient conditions
  are met, $P_x(M_1,M_2)$ has one of $B_{n,2}$, $C_{4,2}$, $E_n$,
  $R_3$, or $R_4$ as a minor.
\item[(3.2)] Using these results and duality, we show that for any
  $M\in\mathcal{E_L}-\mathcal{E}$, the following minors are connected:
  $M\del x$, $M/x$, and $M\del x/y$ for all $x,y\in E(M)$.  Through
  duality, $C_{n+2,2}$ and $D_n$ enter; the matroids $A_3$,
  $\mathcal{W}_3$, and $\mathcal{W}^3$ also arise.
\item[(3.3)] We prove counterparts of Corollary~\ref{cor:lpmff=irr}
  and the second part of Proposition~\ref{prop:dualfund} for any
  $M\in\mathcal{E_L}-\mathcal{E}$.  We show how the fundamental flats
  of $M$ correspond to those of its single-element deletions and
  contractions.  We also show that $\mathcal{W}_3$, $\mathcal{W}^3$,
  $B_{n,k}$, and $C_{n+k,k}$ are the only excluded minors having three
  or more mutually incomparable fundamental flats.
\item[(3.4)] Using the results proven in the first three subsections,
  we show that the properties in Proposition~\ref{prop:lpmchar} hold
  for any $M\in\mathcal{E_L}-\mathcal{E}$, which gives the
  contradiction $M\in \mathcal{L}$, so $\mathcal{E_L}-\mathcal{E} =
  \emptyset$.  (The matroid $A_n$ appears at this stage.)
\end{itemize}

\subsection{Parallel connections}

\begin{lemma}\label{lem:lpmpc1}
  Let $M_1,M_2\in \mathcal{L}$ be nontrivial and connected matroids of
  positive rank with $E(M_1)\cap E(M_2)= \{x\}$.
\begin{itemize}
\item[(i)] If $x$ is a terminal element of both $M_1$ and $M_2$, then
  $P_x(M_1,M_2) \in \mathcal{L}$.
\item[(ii)] If $M_1$ is a parallel connection using basepoint $x$,
  then $P_x(M_1,U_{1,2})\in \mathcal{L}$.
\end{itemize}
\end{lemma}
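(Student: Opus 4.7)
The plan for part (i) is to concatenate the path orders of $M_1$ and $M_2$ so that $x$ becomes a single interior point, then merge the two intervals meeting at $x$. Since $x$ is a terminal element of both matroids and path orders can be reversed, I will choose the path orders so that $x$ is the greatest element of $M_1$ and the least element of $M_2$. Writing the corresponding interval presentations as $\mathcal{A}_1=(J_1,J_2,\ldots,J_r)$ and $\mathcal{A}_2=(K_1,K_2,\ldots,K_s)$, connectivity forces $x=b_r$ and $x$ to be the lower endpoint of $K_1$; moreover, since the upper endpoints of $\mathcal{A}_1$ are strictly increasing with $b_r=x$, the element $x$ lies only in $J_r$ in $\mathcal{A}_1$, and symmetrically only in $K_1$ in $\mathcal{A}_2$. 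I will form the combined order by placing $E(M_1)-x$ below $x$ and $E(M_2)-x$ above $x$, so that $J_r\cup K_1$ becomes a single interval. The plan is then to show that $(J_1,\ldots,J_{r-1},\,J_r\cup K_1,\,K_2,\ldots,K_s)$ is an antichain of intervals (for $i<r$ the upper endpoint of $J_i$ is below $x$; for $j>1$ the lower endpoint of $K_j$ is above $x$; and $J_r\cup K_1$ spans across $x$), and to check via Proposition~\ref{prop:pcbases} that its transversals are exactly the bases of $P_x(M_1,M_2)$, organizing the case analysis by whether $J_r\cup K_1$ is represented by $x$, by an element of $J_r-x$, or by an element of $K_1-x$. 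The count $r+s-1$ of intervals matches the rank of $P_x(M_1,M_2)$.

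For part (ii), the key enabling observation will be that $x$ lies in exactly one interval of any interval presentation $\mathcal{A}=(J_1,J_2,\ldots,J_r)$ of $M_1$ and that $x\notin\{a_1,b_r\}$. To establish this, I will use that $M_1$ being a parallel connection at $x$ with nontrivial factors means $M_1/x$ is disconnected. If $x$ were in two sets of $\mathcal{A}$ or in $\{a_1,b_r\}$, Proposition~\ref{prop:spanning} would produce a spanning circuit $C$ of $M_1$ containing $x$; then $C-x$ would be a spanning circuit of $M_1/x$, forcing $M_1/x$ to be connected, a contradiction. Hence $x$ lies only in some $J_s$. The plan is then to insert a new element $y$ immediately after $x$ in the path order and form the presentation $(J_1,\ldots,J_{s-1},\,J_s\cup\{y\},\,J_{s+1},\ldots,J_r)$. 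Since $x$ lies in no other interval and $y$ is adjacent to $x$, no $J_i$ with $i\ne s$ picks up $y$ in the extended order, so this is an antichain of intervals. Because $x$ and $y$ both lie only in $J_s\cup\{y\}$, the transversal matroid makes $y$ parallel to $x$; matching its bases to those of $P_x(M_1,U_{1,2})$ via Proposition~\ref{prop:pcbases} finishes (ii), with the cases organized by whether the representative of $J_s\cup\{y\}$ is $x$, $y$, or an element of $J_s-x$.

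The main technical obstacle in both parts is verifying that the proposed antichain of intervals actually presents the intended parallel connection. For (i) this reduces to a direct case analysis against Proposition~\ref{prop:pcbases}. For (ii), the deeper step is the unique-interval observation for $x$, which rests essentially on the equivalence between $x$ lying in a spanning circuit of $M_1$ and $M_1/x$ being connected; this is where Proposition~\ref{prop:spanning}, rather than any more intricate lattice-path machinery, does the real work.
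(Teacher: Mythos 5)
Your proof of part (i) follows the same route as the paper's: concatenate the two path orders with $x$ as the hinge, merge $J_r$ and $K_1$ into a single interval, and verify the resulting antichain presents $P_x(M_1,M_2)$ using Proposition~\ref{prop:pcbases}. That part is fine.

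For part (ii) there is a genuine gap in your argument that $x$ lies in only one interval. You argue that $M_1/x$ is disconnected, that if $x$ were in two intervals or in $\{a_1,b_r\}$ then Proposition~\ref{prop:spanning} would give a spanning circuit $C$ of $M_1$ through $x$, and hence that $C-x$ would be a spanning circuit of $M_1/x$, ``forcing $M_1/x$ to be connected.'' That last step fails precisely when $x$ lies in a $2$-circuit of $M_1$: in that case $M_1/x$ acquires a loop, and a matroid can be disconnected while possessing a spanning circuit (the loop is an isolated rank-$0$ component, as in $U_{r,r+1}\oplus U_{0,1}$). This case is not excluded by the hypotheses of (ii); for instance $M_1 = P_x(U_{1,2},M_b)$ is a parallel connection at $x$ in which $x$ has a parallel mate. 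The paper handles this explicitly by splitting into cases: when $x$ has a parallel mate, the fact that $x$ lies in a unique set of the presentation follows from a direct transversal-matroid argument (a parallel pair cannot be placed in distinct sets without producing a partial transversal that would certify their independence); only when $\cl_{M_1}(x)=\{x\}$ does the Proposition~\ref{prop:spanning}/connectivity argument apply, since then every component of $M_1/x$ has positive rank. Your proposal needs this missing case.

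A smaller point: your auxiliary claim that $x\notin\{a_1,b_r\}$ is actually false in the $2$-circuit case (take $M_1 = U_{1,3} = P_x(U_{1,2},U_{1,2})$, where $x$ is a terminal element). Fortunately that claim is not needed: inserting $y$ immediately after $x$ and adjoining $y$ to the unique interval containing $x$ still yields an antichain even when $x$ is an endpoint of the path order, so once the single-interval fact is established the construction goes through without it.
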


\begin{proof}
  For assertion (i), fix path orders $e_1<\cdots<e_m<x$ and
  $x<f_1<\cdots<f_n$ of $M_1$ and $M_2$, respectively, with the
  corresponding interval presentations $\mathcal{A}_1 =
  (J_1,\ldots,J_{r_1})$ and $\mathcal{A}_2 = (J'_1,\ldots,J'_{r_2})$.
  Thus, $x$ is the upper endpoint of $J_{r_1}$ and the lower endpoint
  of $J'_1$.  It follows from Proposition~\ref{prop:pcbases} that
  $(J_1,\ldots,J_{r_1-1},J_{r_1}\cup J'_1,J'_2,\ldots,J'_{r_2})$ is
  the interval presentation of $P_x(M_1,M_2)$ for the path order
  $e_1<\cdots<e_m<x<f_1<\cdots<f_n$.

  To prove assertion (ii), we first claim that $x$ is in only one set
  in any interval presentation $\mathcal{A}$ of $M_1$.  If $x$ is in a
  $2$-circuit, then the claim follows from general considerations
  about transversal matroids; otherwise, all components of the
  disconnected matroid $M_1/x$ have positive rank, so the claim
  follows from Proposition~\ref{prop:spanning}.  To get an interval
  presentation of the parallel extension of $M_1$ by $y$, insert $y$
  immediately after $x$ in the path order of $M_1$ and adjoin $y$ to
  the only interval in $\mathcal{A}$ that contains $x$.
\end{proof}

The following corollary of the proof above can also be shown using
Proposition~\ref{prop:intervalcontract}.

\begin{cor}\label{cor:charpc}
  Assume $M\in \mathcal{L}$ is connected and, in a given path order,
  $x$ is neither the first nor the last element.  There are
  $M_1,M_2\in\mathcal{L}$ with $M = P_x(M_1,M_2)$ if and only if $x$
  is in just one set of the interval presentation.
\end{cor}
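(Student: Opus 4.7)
The plan is to prove both implications directly from the interval presentation; I read the statement as asking for a non-degenerate parallel connection, so that $|E(M_1)|,|E(M_2)|\ge 2$.

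For the forward direction, suppose $\mathcal{A}=(J_1,\ldots,J_r)$ and $x$ lies in the single interval $J_k=[a_k,b_k]$. I would split $\mathcal{A}$ at $x$: let $M_1$ be the lattice path matroid on $\{e\in E(M):e\le x\}$ with presentation $(J_1,\ldots,J_{k-1},[a_k,x])$ and $M_2$ the one on $\{e\in E(M):e\ge x\}$ with presentation $([x,b_k],J_{k+1},\ldots,J_r)$. A first check yields $a_k<x<b_k$ strictly, since $x$ is non-terminal in $M$, lies only in $J_k$, and $M$ is connected. The antichain condition at the splice reduces to the incomparability of $J_{k-1}$ and $[a_k,x]$, which follows from $a_{k-1}<a_k$ (inherited) and $b_{k-1}<x$ (because $x\notin J_{k-1}$); the mirror check works on the right. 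The connectedness criterion $a_1=e_1$, $b_r=e_n$, $a_{i+1}\le b_i$ descends from $M$ to each sub-presentation. Since $x$ is terminal in both $M_1$ and $M_2$, Lemma~\ref{lem:lpmpc1}(i) yields the interval presentation $(J_1,\ldots,J_{k-1},[a_k,x]\cup[x,b_k],J_{k+1},\ldots,J_r)=\mathcal{A}$ for $P_x(M_1,M_2)$, so $P_x(M_1,M_2)=M$.

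For the converse, assume $M=P_x(M_1,M_2)$ with each $|E(M_i)|\ge 2$. Proposition~\ref{prop:pcminors} gives $M/x=(M_1/x)\oplus(M_2/x)$, a disconnected matroid. Toward a contradiction, suppose $x$ lies in $J_s,J_{s+1},\ldots,J_t$ with $s<t$; I would show that $M/x$ is in fact connected. The crux is $\cl_M(x)=\{x\}$: since $M$ is loopless, any $y\ne x$ belongs to some $J_j$, and $\{s,s+1,\ldots,t\}\setminus\{j\}$ is nonempty, so picking $i$ there yields distinct indices $i,j$ with $x\in J_i$ and $y\in J_j$; hence $\{x,y\}$ is a partial transversal of $\mathcal{A}$, so $y$ is not parallel to $x$. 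Proposition~\ref{prop:spanning} then places $x$ in a spanning circuit of $M$, and the observation recorded just after Corollary~\ref{cor:safedel} gives that $M/x$ is connected---the desired contradiction.

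The main obstacle is recognizing that membership of $x$ in at least two sets of $\mathcal{A}$ already forces $\cl_M(x)=\{x\}$; once that is in hand, the converse follows immediately from the spanning-circuit machinery already in place. The forward direction is largely bookkeeping once the splice at $x$ is identified as the natural construction.
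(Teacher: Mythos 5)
Your proof is correct, and the two directions line up with the two hints the paper gives. The forward direction (one interval implies a parallel connection) is exactly the ``corollary of the proof above'': you reverse the splicing in the proof of Lemma~\ref{lem:lpmpc1}(i), and your checks that each half is an antichain of intervals satisfying the connectivity criterion, with $a_k<x<b_k$ forced by $x$ being non-terminal and in only $J_k$, are the right ones. For the converse the paper points to Proposition~\ref{prop:intervalcontract} (from which one reads off directly that $M/x$ has a presentation that is still ``connected'' when $x$ lies in two or more intervals), whereas you instead observe that $x$ in two or more intervals forces $\cl_M(x)=\{x\}$, invoke Proposition~\ref{prop:spanning} to put $x$ in a spanning circuit, and then use the observation after Corollary~\ref{cor:safedel}. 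Both routes are really proving the same intermediate fact, namely that $x$ being in one interval is equivalent to $M/x$ being disconnected, and then finishing via Propositions~\ref{prop:pcminors} and~\ref{prop:discon}; your spanning-circuit route is a touch more self-contained, while the paper's suggestion via the contraction formula is more mechanical. Your decision to read the parallel connection as non-degenerate (both $|E(M_i)|\ge 2$) is the right convention, since otherwise $P_x(M,U_{1,1})=M$ makes the statement vacuous; it might be worth noting explicitly that this guarantees both summands of $M/x=(M_1/x)\oplus(M_2/x)$ are nonempty, which is what makes $M/x$ genuinely disconnected.
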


\begin{lemma}\label{lem:ter}
  Let $M \in \mathcal{L}$ be connected.  Let $S_1$ and $S_2$ be proper
  subsets of $E(M)$ with $S_1\cap S_2 = \{x\}$ and
  $M=P_x(M|S_1,M|S_2)$.  If $x$ is in no $2$-circuit of $M$, then $x$
  is a terminal element of both $M|S_1$ and $M|S_2$.
\end{lemma}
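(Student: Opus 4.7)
The plan is to case-split on whether $x$ is itself a terminal element of $M$. If $x$ is terminal in $M$, then by the observation stated just before Proposition~\ref{prop:intervalcontract} (terminal elements are preserved in every minor that still contains them), $x$ is immediately a terminal element of $M|S_1$ and of $M|S_2$. So assume $x$ is not terminal in $M$; fix a path order $e_1<e_2<\cdots<e_n$ of $M$ with interval presentation $\mathcal{A}=(J_1,\ldots,J_r)$, and write $x=e_i$ with $1<i<n$. Because $M=P_x(M|S_1,M|S_2)$ exhibits $M$ as a parallel connection at $x$ (with both pieces in $\mathcal{L}$), Corollary~\ref{cor:charpc} shows that $x$ lies in a unique interval $J_s$ of $\mathcal{A}$; Proposition~\ref{prop:intervalcontract} then describes the interval presentation of $M/x$ as $(J_1,\ldots,J_{s-1},J_{s+1},\ldots,J_r)$ on $E(M)-x$.

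The technical heart of the argument is to convert ``$x$ is in no $2$-circuit'' (equivalently, $M/x$ is loopless) into the structural identities $b_{s-1}=e_{i-1}$ and $a_{s+1}=e_{i+1}$, which incidentally force $1<s<r$. For $b_{s-1}$: the non-loop $e_{i-1}$ of $M/x$ must lie in some $J_j$ with $j\ne s$. An interval $J_j$ with $j>s$ has $a_j>x$ (since $x\notin J_j$, combined with $M$'s connectedness pushing the lower endpoints up through the chain), so $J_j\subseteq U=\{e_{i+1},\ldots,e_n\}$ and cannot contain $e_{i-1}$; thus $j<s$. For such $j$, the constraint $x\notin J_j$ gives $b_j<x$, hence $b_j\le e_{i-1}$, and the strict monotonicity of the $b_j$'s then forces $b_{s-1}=e_{i-1}$ (in particular $s>1$). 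The symmetric argument gives $s<r$ and $a_{s+1}=e_{i+1}$. I expect this element-by-element pinning down of the interval endpoints from the no-loop condition to be the main obstacle.

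These identities split the presentation of $M/x$ cleanly: $J_1,\ldots,J_{s-1}$ all lie in $L=\{e_1,\ldots,e_{i-1}\}$ while $J_{s+1},\ldots,J_r$ all lie in $U$, so $M/x=N_L\oplus N_U$ for transversal matroids $N_L$ on $L$ and $N_U$ on $U$. Applying the unlabeled connectedness criterion for lattice path matroids stated in Section~\ref{sec:lpms} to each subpresentation (using $a_1=e_1$, $b_{s-1}=e_{i-1}$, and the inherited chain condition, and symmetrically for $N_U$) shows that $N_L$ and $N_U$ are connected, so $M/x$ has exactly the two connected components $L$ and $U$. Proposition~\ref{prop:pcminors} yields $M/x=(M|S_1)/x\oplus(M|S_2)/x$, and each $S_j-x$ is a nonempty union of components of $M/x$, so $\{S_1-x,S_2-x\}=\{L,U\}$. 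In the induced path orders, $x$ is thus the greatest element of $L\cup\{x\}$ and the least element of $U\cup\{x\}$, i.e., a terminal element of both $M|S_1$ and $M|S_2$.
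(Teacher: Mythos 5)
Your proof is correct, and it takes a genuinely different route from the paper's. The paper's argument is structural and short: from $\cl(x)=\{x\}$ it deduces that $S_1$ and $S_2$ are pnc-flats of $M$; by Propositions~\ref{prop:fundfl} and~\ref{prop:lpmchar}, pnc-flats of a connected matroid in $\mathcal{L}$ are intervals in any path order, and since $S_1\cup S_2=E(M)$ with $S_1\cap S_2=\{x\}$, the only possibility is $\{S_1,S_2\}=\{[e_1,x],[x,e_n]\}$, whence $x$ is terminal in both restrictions. You instead work directly with the interval presentation: via Corollary~\ref{cor:charpc} you locate $x$ in a unique $J_s$, and by using looplessness of $M/x$ you pin down $b_{s-1}=e_{i-1}$ and $a_{s+1}=e_{i+1}$, which splits the presentation of $M/x$ cleanly into two connected pieces $L$ and $U$; matching these against the components $S_1-x$ and $S_2-x$ from Proposition~\ref{prop:pcminors} finishes the job. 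Your computation is more elementary and self-contained (it only needs the presentation machinery of Section~\ref{sec:lpms}, not the classification of pnc-flats), at the cost of length and some endpoint bookkeeping; the paper's route is faster but leans on the structural Propositions~\ref{prop:fundfl} and~\ref{prop:lpmchar}. One remark: your first case ($x$ terminal in $M$) is actually vacuous under the hypotheses, since if $x$ is terminal and in no $2$-circuit then $M/x$ is connected, contradicting $M=P_x(M|S_1,M|S_2)$ with both parts of rank at least two; your treatment of the case is still harmless.
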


\begin{proof}
  Fix a path order $e_1<e_2<\cdots<e_n$ of $M$.  Since $\cl(x) =
  \{x\}$, both $S_1$ and $S_2$ are pnc-flats of $M$.  Since $S_1\cup
  S_2 = E(M)$ and $S_1\cap S_2 = \{x\}$, the description of pnc-flats
  given in Propositions~\ref{prop:fundfl} and~\ref{prop:lpmchar}
  implies that $S_1$ and $S_2$ are, in some order, $[e_1,x]$ and
  $[x,e_n]$, so $x$ is terminal in $M|S_1$ and $M|S_2$.
\end{proof}

\begin{lemma}\label{lem:lpmpc2}
  Assume $E(M_1)\cap E(M_2)= \{x\}$ for nontrivial connected matroids
  $M_1,M_2$ in $\mathcal{L}$ of positive rank.  Assume $x$ is
  nonterminal in $M_1$; if $M_1/x$ is disconnected, then also assume
  $r(M_2)>1$.  At least one of $B_{n,2}$, $C_{4,2}$, $E_n$, $R_3$,
  $R_4$ is a minor of $P_x(M_1,M_2)$.
\end{lemma}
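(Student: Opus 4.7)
The strategy is to split on whether $M_1/x$ is connected, and in each case to use Proposition~\ref{prop:pcminors} to push deletions and contractions through the parallel connection, reducing $M_1$ and $M_2$ to explicit small matroids whose parallel connection contains one of the five listed excluded minors.

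First, consider the case in which $M_1/x$ is connected. Since $x$ is nonterminal, $M_1$ cannot be a nontrivial parallel connection at $x$: if $M_1 = P_x(N_1, N_2)$ with both $N_i$ having elements other than $x$, then by Proposition~\ref{prop:pcminors} the contraction $M_1/x = N_1/x \oplus N_2/x$ would be disconnected. Hence, by Corollary~\ref{cor:charpc}, $x$ lies in at least two intervals of any interval presentation of $M_1$, and Proposition~\ref{prop:spanning} produces a spanning circuit $C_1$ of $M_1$ containing $x$. Using nonterminality I choose $u, v \in C_1 - x$ on opposite sides of $x$ in a path order; contracting $C_1 - \{u, x, v\}$ and deleting $E(M_1) - C_1$ reduces $M_1$ (via Proposition~\ref{prop:pcminors}) to a $3$-circuit $\{u, x, v\}$ with $x$ in the middle. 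I then analyze $M_2$ by cases: if $x$ lies on a spanning circuit of $M_2$ (in particular, whenever $x$ is terminal in $M_2$, by Proposition~\ref{prop:spanning}), a parallel reduction gives a circuit-minor of $M_2$ through $x$, and $P_x$ of the two circuits --- possibly after a further reduction --- is recognized as $R_3$, $R_4$, or $E_n$; if $r(M_2) = 1$ the analogue is a parallel extension, which produces $R_3$ as well; and if $M_2$ is itself a parallel connection at $x$, I use associativity to reassociate and fall back on the cases already handled.

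In the remaining case, $M_1/x$ is disconnected, so Proposition~\ref{prop:discon} writes $M_1 = P_x(N_1, N_2)$ for connected $N_1, N_2 \in \mathcal{L}$ of positive rank. Associativity of parallel connection yields $P_x(M_1, M_2) = P_x\bigl(N_1, P_x(N_2, M_2)\bigr)$, a triple parallel connection at $x$ with three pieces of positive rank (using the extra hypothesis $r(M_2) > 1$). Reducing each of the three pieces to a small circuit-minor through Proposition~\ref{prop:pcminors} produces a triple parallel connection of circuits which --- possibly after a truncation arising from the reduction of $M_2$ --- matches $B_{n,2}$, or $C_{4,2}$ in the smallest subcase.

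The main obstacle will be the bookkeeping in the first case: ensuring each reduction preserves the nonterminality of $x$, handling each subcase for the position of $x$ in $M_2$ (terminal vs.\ in several intervals vs.\ in a single interior interval, and rank $1$ vs.\ rank $\geq 2$), and verifying that each subcase really produces one of the matroids on the named list rather than some other minor. Associativity of parallel connection and the contraction formula $P_x(M_1,M_2)/x = (M_1/x)\oplus (M_2/x)$ are the structural tools that keep the case analysis tractable.
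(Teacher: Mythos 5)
The proposal has a fundamental gap in the $M_1/x$-connected case: reducing $M_1$ to a $3$-circuit $\{u,x,v\}$ destroys exactly the hypothesis the lemma is built on. A $3$-circuit is $U_{2,3}$, a uniform matroid, and every element of a uniform matroid is a terminal element for some path order. So after your reduction $x$ is terminal in the piece coming from $M_1$, and once $x$ is also terminal (or made terminal by reduction) in the piece from $M_2$, Lemma~\ref{lem:lpmpc1}(i) tells you $P_x(\cdot,\cdot)$ is in $\mathcal{L}$. Concretely, the ``$P_x$ of two circuits'' you propose to recognize as $R_3$, $R_4$, or $E_n$ is just $P'_n$, which is a lattice path matroid; since $\mathcal{L}$ is minor-closed, no minor of it can be one of the listed excluded minors. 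The phrase ``with $x$ in the middle'' does not rescue this, because a circuit has no intrinsic middle --- the nonterminality of $x$ is a property of the interval presentation of the full $M_1$, and it is not inherited by a circuit minor.

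The paper's proof avoids this by locating $x$ relative to the fundamental flats of $M_1$ and taking a \emph{minimal} minor $N$ of $M_1$ that still has $N/x$ connected \emph{and still exhibits the obstruction to terminality} (namely, $x\in F'_s-F'_{s-1}$ with $s>1$, or $x$ in all fundamental flats with not all comparable, or the dual). Those minimal $N$ turn out to be the rank-$3$ matroids $N_1,N_2$ of Figure~\ref{r3etc2}, or $P'_n$, or the simplification of $R_4$, or their duals --- all considerably larger than a $3$-circuit, precisely because a smaller minor would let $x$ slip to the boundary. Only then does parallel-connecting with a small minor of $M_2$ (often just a $3$-circuit or a two-point line, which \emph{is} allowed to have $x$ terminal) produce $R_3$, $R_4$, $E_n$, $B_{n,2}$, or $C_{4,2}$. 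Your outline for the disconnected case is closer in spirit, but still omits the necessary deletion of $x$ to arrive at $C_{4,2}$. To repair the argument you would need to identify, for each way $x$ fails to be terminal in $M_1$, the smallest minor of $M_1$ preserving that failure --- which is essentially what the paper does.
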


\begin{proof}
  If $\{x,y\}$ is a circuit of $M_1$, then since $x$ is nonterminal in
  $M_1$, it is nonterminal in $M_1\del y$.  Thus, it suffices to prove
  the result when $\cl_{M_1}(x)=\{x\}$.
   
  Assume $M_1/x$ is disconnected.  Thus, $M_1$ is the parallel
  connection, at $x$, of two connected matroids, each of rank at least
  two since $\cl_{M_1}(x)=\{x\}$, so, by
  Proposition~\ref{prop:pcminors}, $M_1$ has a $P'_3$-minor with $x$
  in both $3$-circuits.  Now $r(M_2)>1$, so $P_x(M_1,M_2)$ has, as a
  minor, the parallel connection of three $3$-circuits with the
  basepoint $x$; deleting $x$ from this minor yields $C_{4,2}$.

  Now assume $M_1/x$ is connected.  Fix a path order of $M_1$.  If
  $M_1\in \mathcal{C}$ and $F_1\subsetneq \cdots\subsetneq F_h$ are
  its fundamental flats, then $F_1\cup \bigl(E(M_1)-F_h\bigr)$ is its
  set of terminal elements.  If $M_1\not\in\mathcal{C}$ and
  $F_1\subsetneq \cdots\subsetneq F_h$ and $G_1\subsetneq
  \cdots\subsetneq G_k$ are its fundamental flats, then, by
  Proposition~\ref{prop:lpmauto}, its set of terminal elements is
  $(F_1-G_k)\cup (G_1-F_h)$.  Thus, by symmetry, we may assume one of
  the following options holds: (a) $x\in F_i-F_{i-1}$ for some $i$
  with $1<i\leq h$, (b) $x\in F_1\cap G_1$, or (c) $x\not\in F_h\cup
  G_k$.

  \begin{figure}
    \begin{center}
      \includegraphics[width =2.0truein]{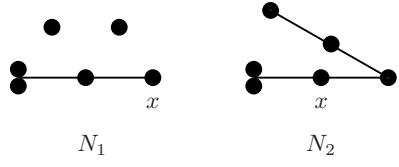}
    \end{center}
    \caption{Two minors that arise in the proof of
      Lemma~\ref{lem:lpmpc2}.}\label{r3etc2}
  \end{figure}

  Assume $x\in F_i-F_{i-1}$.  Among all minors of $M_1$ that meet the
  following conditions, let $N$ be one for which $|E(N)|$ is minimal:
  (a) $x\in E(N)$, (b) $N$ and $N/x$ are connected, and (c) for at
  least one of the chains $F'_1\subsetneq \cdots\subsetneq F'_t$ of
  fundamental flats of $N$, we have $x \in F'_s-F'_{s-1}$ for some $s$
  with $1<s\leq t$.  We claim that $N$ is one of the matroids in
  Figure~\ref{r3etc2}.  To see this, first note that, by
  Corollary~\ref{cor:charpc}, since $N/x$ is connected, $x$ is in at
  least two intervals in the induced interval presentation
  $\mathcal{A}'$ of $N$.  We may assume $F'_1$ contains the least
  element, $a$, of $E(N)$.  If $r(F'_1)>1$, then, by
  Corollary~\ref{cor:keepch}, $N/a$ would contradict the minimality of
  $|E(N)|$ (note that $x$ would be in at least two intervals in the
  interval presentation of $N/a$, so $N/a,x$ would be connected);
  thus, $r(F'_1)=1$.  If either $|F'_1|>2$ or $s>2$, then $N\del a$
  would contradict the minimality of $|E(N)|$, so $F'_1$ is a
  $2$-circuit and $s=2$.  We claim $r(F'_2) = 2$.  Assume $r(F'_2) >
  2$ and consider the intervals $J_2=[a_2,b_2]$ and $J_3=[a_3,b_3]$ of
  $\mathcal{A}'$.  If $a_2\ne x$, then, since $N$ is connected, either
  $J_1\cap\{a_2,x\}=\{a_2\}$ or $x$ is in at least three intervals;
  thus, by Proposition~\ref{prop:intervalcontract}, $x$ is in at least
  two intervals in the presentation of $N/a_2$; spanning circuits show
  that $N/a_2$ is connected; these conclusions contradict the
  minimality of $|E(N)|$, so $a_2=x$.  Thus $x\not\in J_3$, so $N/a_3$
  is connected and has $x$ in at least two presentation intervals,
  which contradicts the minimality of $|E(N)|$.  Thus, $F'_2$ is a
  line.  The minimality of $|E(N)|$ also gives $|F'_2|=4$. If
  $r(N)>3$, and $b$ is the greatest element of $E(N)$, then $N\del
  (\cl_N(b)-b)/b$ would contradict the minimality of $|E(N)|$, so
  $r(N) = 3$.  Similar arguments show that $E(N)-F'_2$ is an
  independent set of size two.  Since $N/x$ is connected,
  $(E(N)-F'_2)\cup x$ is not a line.  Thus, $N$ is either $N_1$ or
  $N_2$ of Figure~\ref{r3etc2}.  If $N=N_1$, then, by
  Lemma~\ref{prop:pcminors}, any parallel connection with $M_1$ at $x$
  has an $R_3$-minor; if $N=N_2$, then any such parallel connection
  has a $B_{2,2}$-minor.

  Now assume $x\in F_1\cap G_1$.  Among all minors of $M_1$ that meet
  the following conditions, let $N$ be one for which $|E(N)|$ is
  minimal: (a) $x\in E(N)$, (b) $N$ and $N/x$ are connected, (c) not
  all fundamental flats of $N$ are comparable, and (d) $x$ is in all
  fundamental flats of $N$.  We claim that $N$ is either $P'_n$, for
  some $n\geq 4$, or the simplification of $R_4$.  Let $F$ (resp.,
  $G$) be the smallest fundamental flat that contains the least
  (resp., greatest) element of $E(N)$. Property (d) implies that $x$
  is in all pnc-flats, so, by property (b), $N$ has no $2$-circuits.
  If $r(F)\leq r(N)-2$, then $N/b$, where $b$ is the greatest element
  of $E(N)$, would contradict the minimality of $|E(N)|$, so $F$ (and
  likewise $G$) is a hyperplane of $N$.  If $r(F)=r(G)=2$, then $N/x$
  would be disconnected (note that $F\cup G = E(N)$), so $r(N)\geq 4$.
  Since $F$ and $G$ are the only fundamental flats of $N$, by
  Proposition~\ref{prop:lpmchar}, the only possible pnc-flat of $N$
  besides $F$ and $G$ is $F\cap G$.  By Corollary~\ref{cor:charpc} and
  Proposition~\ref{prop:spanning}, there is a spanning circuit $C$ of
  $N$ with $x\in C$.  If $F\cap G$ were a pnc-flat, then $F\cap
  G\not\subseteq C$ and both $N|F$ and $N|G$ would be non-uniform
  nested matroids; it follows that $N\del y$, for any $y\in (F\cap
  G)-C$, would contradict the minimality of $|E(N)|$.  Thus, $F$ and
  $G$ are the only pnc-flats of $N$.  Since $N|F$ and $N|G$ are
  uniform, by the minimality of $|E(N)|$, both $F$ and $G$ are
  circuits.  Assume first $|F\cap G|=r(N)-2$, so $|F-G| = 2 = |G-F|$.
  If $r(N)>4$, then $N/y$, for any $y\in (F\cap G)-x$, would
  contradict the minimality of $|E(N)|$.  Thus, $r(N)=4$ and $N$ is
  the simplification of $R_4$, with $x$ in both $4$-circuits;
  therefore any parallel connection using $M_1$ with $x$ as the
  basepoint has an $R_4$-minor.  (To prepare for the next paragraph,
  note that the dual of this minor $N$ is a line with four points, two
  of which are $2$-circuits, and $x$ is not in a $2$-circuit.)  Now
  assume $|F\cap G|<r(N)-2$, so $|F-G|\geq 3$ and $|G-F|\geq 3$.  The
  minimality of $|E(N)|$ forces $F\cap G = \{x\}$, so $N = P'_n$ for
  some $n\geq 4$, with $x$ being common to the two nonspanning
  circuits.  In this case, any parallel connection using $M_1$ with
  $x$ as the basepoint has an $E_n$-minor for some $n\geq 4$.

  Finally, assume $x\not\in F_h\cup G_k$.  Using
  Proposition~\ref{prop:dualfund}, it follows that $x$ is in all
  fundamental flats of $M_1^*$.  Therefore, by the results in the last
  paragraph, $M_1$ has, as a minor, either (a) a $4$-point line with
  two $2$-circuits, neither of which contains $x$ or (b) the dual of
  $P'_n$ for some $n\geq 4$, with $x$ in neither circuit-hyperplane.
  It follows that any parallel connection using $M_1$ with $x$ as the
  basepoint has, in the first case, a $B_{2,2}$-minor and, in the
  second case, a $B_{n,2}$-minor with $n\geq 3$.
\end{proof}

\subsection{Connectivity}

Recall that $M\in \mathcal{E_L}-\mathcal{E}$ if and only if $M$ is an
excluded minor of $\mathcal{L}$ that is not in items (1)--(5) of
Theorem~\ref{thm:list}.

\begin{lemma}\label{lem:minorcon}
  For $M\in\mathcal{E_L}-\mathcal{E}$, both $M\del x$ and $M/x$ are
  connected for all $x\in E(M)$.
\end{lemma}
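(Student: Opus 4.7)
The plan is to argue by contradiction. By duality (the classes $\mathcal{L}$, $\mathcal{E_L}$, and $\mathcal{E}$ are all closed under taking duals, and $(M\del x)^* = M^*/x$), it suffices to show that $M\del x$ is connected for every $x\in E(M)$; the statement for $M/x$ then follows upon passing to $M^*\in\mathcal{E_L}-\mathcal{E}$. So suppose, to the contrary, that $M\del x$ is disconnected. Then $M^*/x$ is disconnected while $M^*\in\mathcal{E_L}-\mathcal{E}$ is itself connected.

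By Proposition~\ref{prop:discon} applied to $M^*$, we may write $M^* = P_x(N_1,N_2)$ with $N_1$ and $N_2$ connected proper restrictions of $M^*$; since $M^*$ is an excluded minor for $\mathcal{L}$, both $N_1,N_2\in\mathcal{L}$. The element $x$ is not a loop of $M^*$ (otherwise $M^*$ would be disconnected), so both $N_i$ have positive rank; and each has at least two elements because both direct summands of $M^*/x$ are nonempty. Hence $N_1$ and $N_2$ are nontrivial connected matroids of positive rank in $\mathcal{L}$, intersecting exactly in $\{x\}$, so the hypotheses of Lemma~\ref{lem:lpmpc1} and Lemma~\ref{lem:lpmpc2} are available to us.

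The argument now splits into cases. If $x$ is a terminal element in both $N_1$ and $N_2$, then Lemma~\ref{lem:lpmpc1}(i) gives $M^* = P_x(N_1,N_2)\in\mathcal{L}$, a contradiction. Swapping $N_1$ with $N_2$ if needed, assume $x$ is nonterminal in $N_1$. If in addition either $r(N_2)\geq 2$, or $r(N_2)=1$ with $N_1/x$ connected, then Lemma~\ref{lem:lpmpc2} applies and shows that $M^*$ has a minor in $\{B_{n,2},C_{4,2},E_n,R_3,R_4\}\subseteq\mathcal{E}$; but $M^*\notin\mathcal{E}$ while every proper minor of the excluded minor $M^*$ lies in $\mathcal{L}$, so no minor of $M^*$ can lie in $\mathcal{E}$, again a contradiction. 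The only remaining case has $r(N_2)=1$ and $N_1/x$ disconnected, so $N_2=U_{1,k}$ for some $k\geq 2$ and, by Proposition~\ref{prop:discon} applied to $N_1$, $N_1$ is itself a parallel connection at $x$. Lemma~\ref{lem:lpmpc1}(ii) then gives $P_x(N_1,U_{1,2})\in\mathcal{L}$; since this matroid is again a parallel connection at $x$, iterating the parallel-extension step $k-1$ times puts $M^* = P_x(N_1,U_{1,k})$ in $\mathcal{L}$, a final contradiction.

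The main obstacle is the residual case in which $N_1/x$ is disconnected and $r(N_2)=1$, which lies outside the hypotheses of Lemma~\ref{lem:lpmpc2}: it is settled by using Proposition~\ref{prop:discon} to exhibit $N_1$ itself as a parallel connection at $x$, so that the parallel-extension clause of Lemma~\ref{lem:lpmpc1}(ii) absorbs the trivial rank-one factor $N_2$ into $\mathcal{L}$.
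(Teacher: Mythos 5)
Your proof is correct and follows essentially the same path as the paper's: apply Proposition~\ref{prop:discon} to exhibit the matroid (you use $M^*$ where the paper uses $M$, an immaterial choice by duality) as a parallel connection of two connected matroids in $\mathcal{L}$, then dispose of the cases via Lemma~\ref{lem:lpmpc1} and Lemma~\ref{lem:lpmpc2}. The only difference is that you spell out the residual case $r(N_2)=1$ with $N_1/x$ disconnected by iterating the parallel-extension clause of Lemma~\ref{lem:lpmpc1}(ii), a step the paper leaves implicit in the phrase ``we may assume $r(S_2)>1$.''
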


\begin{proof}
  We argue by contradiction.  Assume $M/x$ is disconnected.  By
  Proposition~\ref{prop:discon}, there are subsets $S_1, S_2\subsetneq
  E(M)$ with $M=P_x(M|S_1,M|S_2)$ where $M|S_1$ and $M|S_2$ are
  connected.  By Lemma~\ref{lem:lpmpc1}, since
  $M|S_1,M|S_2\in\mathcal{L}$ yet $M\not\in\mathcal{L}$, we may assume
  $x$ is not terminal in $M|S_1$; also, if $M|S_1/x$ is disconnected,
  then we may assume $r(S_2)>1$.  From Lemma~\ref{lem:lpmpc2}, some
  minor of $M$ is in $\mathcal{E}$, contrary to $M\in
  \mathcal{E_L}-\mathcal{E}$.  Thus, $M/x$ is connected.  That $M\del
  x$ is connected follows since $M\del x=(M^*/x)^*$ and
  $M^*\in\mathcal{E_L}-\mathcal{E}$.
\end{proof}

\begin{cor}\label{cor:no2}
  Matroids in $\mathcal{E_L}-\mathcal{E}$ have no $2$-circuits and no
  $2$-cocircuits.
\end{cor}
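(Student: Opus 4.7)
The plan is to apply Lemma~\ref{lem:minorcon} directly and derive an immediate contradiction. Suppose for contradiction that some $M\in\mathcal{E_L}-\mathcal{E}$ has a 2-circuit $\{x,y\}$. First I would observe that $|E(M)|\geq 3$: every matroid on at most two elements is uniform and hence belongs to $\mathcal{L}$, so cannot be an excluded minor. Now in $M/x$ the element $y$ becomes a loop, and since $|E(M/x)|=|E(M)|-1\geq 2$, this loop forms a component by itself, so $M/x$ is disconnected. This contradicts Lemma~\ref{lem:minorcon}, so $M$ has no 2-circuit.

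For the 2-cocircuit statement, the cleanest route is through duality. The text opening Section~\ref{sec:exlpm} already records that $M\in\mathcal{E_L}$ if and only if $M^*\in\mathcal{E_L}$; inspection of Theorem~\ref{thm:list} shows that the list $\mathcal{E}$ is itself closed under duality (each $A_n$, $\mathcal{W}_3$, and $\mathcal{W}^3$ is self-dual, while $B_{n,k}$, $D_n$, and $R_3$ are paired with $C_{n+k,k}$, $E_n$, and $R_4$ already in the list). Hence $\mathcal{E_L}-\mathcal{E}$ is closed under duality, so $M^*\in\mathcal{E_L}-\mathcal{E}$, and by the previous paragraph $M^*$ has no 2-circuit; since a 2-cocircuit of $M$ is exactly a 2-circuit of $M^*$, $M$ has no 2-cocircuit. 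One can also argue this directly: a 2-cocircuit $\{x,y\}$ of $M$ would make $y$ a coloop in $M\del x$, again contradicting the connectivity of $M\del x$ given by Lemma~\ref{lem:minorcon}.

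I do not anticipate any substantive obstacle here. The corollary is essentially the immediate consequence of Lemma~\ref{lem:minorcon} together with the elementary fact that a matroid on at least two elements having a loop or coloop is disconnected; the only bookkeeping needed is the duality-closure of $\mathcal{E_L}-\mathcal{E}$, which is visible from the statement of Theorem~\ref{thm:list}.
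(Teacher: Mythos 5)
Your proof is correct and is the intended one: the paper states this as an immediate corollary of Lemma~\ref{lem:minorcon} with no written proof, and the argument you give (a 2-circuit would produce a loop in a single-element contraction, a 2-cocircuit a coloop in a single-element deletion, either of which would disconnect that minor) is exactly what is meant. Your brief checks---that $|E(M)|\geq 3$ and that $\mathcal{E_L}-\mathcal{E}$ is closed under duality---are appropriate and accurate.
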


\begin{lemma}\label{lem:minorcon2}
  If $M\in\mathcal{E_L}-\mathcal{E}$, then $M\del x/y$ is connected
  for all $x,y\in E(M)$.
\end{lemma}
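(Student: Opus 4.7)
The plan is to argue by contradiction, mirroring the proof of Lemma~\ref{lem:minorcon}. Suppose $M\del x/y$ is disconnected. By Lemma~\ref{lem:minorcon}, $M\del x$ is connected, so Proposition~\ref{prop:discon} gives $M\del x=P_y(M_1,M_2)$ with $M_1,M_2$ connected of positive rank, both in $\mathcal{L}$ since $M$ is an excluded minor. A path order of $M\del x\in\mathcal{L}$ splits into a left and a right block around $y$; choosing $M_1,M_2$ to correspond to those two blocks (together with $y$) makes $y$ terminal in both sides. Were this impossible, Lemmas~\ref{lem:lpmpc1} and~\ref{lem:lpmpc2} applied to $P_y(M_1,M_2)\in\mathcal{L}$ would place one of $B_{n,2},C_{4,2},E_n,R_3,R_4$ inside $M\del x$, hence inside $M$, contradicting $M\in\mathcal{E_L}-\mathcal{E}$; the reduction uses that every element of a connected rank-$1$ matroid is terminal and that the ``all rank one'' degenerate case would already force $M\in\mathcal{L}$.

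Next, by duality $(M\del x/y)^*=M^*\del y/x$ is disconnected, and $M^*\del y$ is connected by Lemma~\ref{lem:minorcon} applied to $M^*\in\mathcal{E_L}-\mathcal{E}$. The same argument yields $M^*\del y=P_x(N_1,N_2)$ with $x$ terminal in both $N_i$. Matching the components of $M^*\del y/x$ with those of $M\del x/y=M_1/y\oplus M_2/y$ lets one arrange $E(N_1)=(E(M_1)-y)\cup\{x\}$ and $E(N_2)=(E(M_2)-y)\cup\{x\}$, so we have two orthogonal parallel-connection decompositions: $M\del x$ at $y$ and $M^*\del y$ at $x$.

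The last step is the main obstacle: combine these two decompositions to exhibit a minor of $M$ in $\mathcal{E}$. Since $M/y$ is connected by Lemma~\ref{lem:minorcon}, $M$ itself is not a parallel connection at $y$, so $x$ must lie in some circuit of $M$ meeting both $E(M_1)-y$ and $E(M_2)-y$ while avoiding $y$. Picking such a minimal ``bridging'' circuit and contracting or deleting elements outside it within each of those two sides reduces $M$ to a small connected minor whose structure is controlled by the ranks and interval presentations of the surviving pieces of $M_1,M_2,N_1,N_2$ (and of where $x$ sits in $N_1,N_2$). A case analysis---depending on whether these surviving pieces are circuits, thicker LPMs, or rank-one matroids---should then exhibit one of $A_3$, $\mathcal{W}_3$, $\mathcal{W}^3$, $C_{n+2,2}$, $D_n$, $R_3$, or $R_4$ as a minor of $M$, contradicting $M\in\mathcal{E_L}-\mathcal{E}$. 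The combinatorial enumeration in this last step is expected to be the bulk of the proof; the preceding reductions provide the rigid framework---terminal basepoints on each side of both parallel connections---within which the enumeration can be carried out.
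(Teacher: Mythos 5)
Your opening steps are sound and parallel the paper's: you correctly apply Proposition~\ref{prop:discon} to $M\del x$, and although you phrase the terminality of $y$ as something you "choose" (it is in fact forced by where $M\del x/y$ splits), your fallback argument via Lemma~\ref{lem:lpmpc2} does yield that $y$ is terminal in both blocks. The paper reaches this more directly via Lemma~\ref{lem:ter} together with Corollary~\ref{cor:no2}, but either route is fine. The dualization and the identification of the ground sets of $N_1,N_2$ with $(E(M_i)-y)\cup\{x\}$ are also correct.

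The genuine gap is the final paragraph: you never carry out the case analysis you invoke, and you explicitly acknowledge that it "is expected to be the bulk of the proof." That is not a proof; it is a framework plus a promissory note, and there is real reason to doubt the framework leads anywhere clean. Your predicted list of outcomes ($A_3$, $\mathcal{W}_3$, $\mathcal{W}^3$, $C_{n+2,2}$, $D_n$, $R_3$, $R_4$) does not match what the situation actually forces: the paper shows that $M$ must be exactly one of $A_3$, $\mathcal{W}_3$, $\mathcal{W}^3$, and the engine that pins this down is quite different from your bridging-circuit idea. After fixing spanning circuits $C_1,C_2$ of $M|S_1,M|S_2$ through $y$ (possible because $y$ is terminal in each block), the paper observes $x\notin\cl_M(S_1)\cup\cl_M(S_2)$ (else $M/y$ would be disconnected), applies Corollary~\ref{cor:notlpm} inside suitable deletions of $M$ to conclude $E(M)=C_1\cup C_2\cup x$, and then combines semimodularity with a second application of Corollary~\ref{cor:notlpm} to show $|C_1|=|C_2|=3$, hence $M\del x=P'_3$. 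None of this requires the dual decomposition $M^*\del y=P_x(N_1,N_2)$ at all. You should replace the speculative "bridging circuit plus enumeration" step with this concrete chain of reductions, or actually carry out the enumeration you are gesturing at; as written, the core of the argument is missing.
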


\begin{proof}
  Assume $M\del x/y$ is disconnected; we will get the contradiction
  that $M$ is $A_3$, $\mathcal{W}_3$, or $\mathcal{W}^3$.  Since
  $M\del x$ is connected, $M\del x=P_y(M|S_1,M|S_2)$ for some proper
  subsets $S_1$, $S_2$ of $E(M)-x$ where $M|S_1$ and $M|S_2$ are
  connected.  Lemma~\ref{lem:ter} and Corollary~\ref{cor:no2} imply
  that $y$ is terminal in $M|S_1$ and $M|S_2$.  Now $M/y\del x =
  (M|S_1/y)\oplus (M|S_2/y)$ yet $M/y$ is connected, so $x\not\in
  \cl_M(S_1)\cup \cl_M(S_2)$.  Since $M|S_1$ and $M|S_2$ are connected
  and in $\mathcal{L}$, and since $y$ is terminal in both, some
  spanning circuits $C_1$ of $M|S_1$ and $C_2$ of $M|S_2$ contain $y$.
  Now $|C_1|\geq 3$ and $|C_2|\geq 3$ by Corollary~\ref{cor:no2}.
  Since $C_1\cup C_2$ spans $M$ but $x\not\in \cl_M(C_1)\cup
  \cl_M(C_2)$, Corollary~\ref{cor:notlpm} gives $E(M)=C_1\cup C_2\cup
  x$, so $M$ is a single-element extension of $P_y(M|C_1,M|C_2)$.
  Semimodularity applied to $C_1$ and $\cl_M(C_2\cup x)$ gives
  $r\bigl(C_1\cap \cl_M(C_2\cup x)\bigr)\leq 2$.  If $|C_1|>3$, then
  $M/z$, for $z \in (C_1-y)-\cl_M(C_2\cup x)$, would be a
  single-element extension, by $x$, of $P_y(M|C_1/z,M|C_2)$; both
  $C_1-z$ and $C_2$ would be pnc-flats of $M/z$ yet $x\not\in
  (C_1-z)\cup C_2$, contrary to Corollary~\ref{cor:notlpm}.  Thus,
  $M\del x=P'_3$, so, as claimed, $M$ is $A_3$, $\mathcal{W}_3$, or
  $\mathcal{W}^3$.
\end{proof}

\subsection{Fundamental flats}
The following four lemmas enter into the proof of
Lemma~\ref{lem:f=ir}, which is a counterpart of
Corollary~\ref{cor:lpmff=irr}.

\begin{lemma}\label{lem:ff1}
  For a connected matroid $M$ and connected deletion $M\del x$, if $F$
  is a fundamental flat of $M\del x$, then $\cl_M(F)$, which is $F$ or
  $F\cup x$, is a fundamental flat of $M$.
\end{lemma}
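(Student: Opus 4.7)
My plan is to verify, in order: (i) the dichotomy $\cl_M(F)\in\{F,F\cup x\}$; (ii) that $\cl_M(F)$ is a pnc-flat of $M$; and (iii) that a spanning circuit witnessing $F$ as a fundamental flat of $M\del x$ also witnesses $\cl_M(F)$ as a fundamental flat of $M$. For (i), I would note that for any $y\in E(M\del x)-F$ the rank functions of $M$ and $M\del x$ agree on $F\cup y$, and since $F$ is a flat of $M\del x$ the rank strictly increases; thus the only candidate in $E(M)$ for lying in $\cl_M(F)\setminus F$ is $x$. This observation also gives properness of $\cl_M(F)$ in $M$: $F$ being proper in $M\del x$ ensures $E(M)-(F\cup x)\ne\emptyset$.

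For (ii), $\cl_M(F)$ is a flat by construction, and nontriviality transfers because any circuit of $M\del x$ contained in $F$ remains a circuit of $M$. The substantive point is connectedness of $M|\cl_M(F)$. When $\cl_M(F)=F$ it coincides with $M|F=(M\del x)|F$, which is connected by hypothesis. When $\cl_M(F)=F\cup x$ I would fix a basis $B$ of $F$ and consider the fundamental circuit of $x$ with respect to $B$ in $M$: it is contained in $F\cup x$, contains $x$, and meets $F$, so $x$ shares a circuit of $M$ with an element of $F$; combined with connectedness of $M|F$ this yields connectedness of $M|(F\cup x)$.

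For (iii), let $C$ be a spanning circuit of $M\del x$ with $F\cap C$ a basis of $F$. Since $M$ is connected, $x$ is not a coloop of $M$, so $r(M)=r(M\del x)$; thus $C$ remains a circuit of $M$ of size $r(M)+1$ and hence is a spanning circuit of $M$. Because $x\notin C$, we get $\cl_M(F)\cap C=F\cap C$, a basis of $F$ whose cardinality equals $r(\cl_M(F))$, hence a basis of $\cl_M(F)$. The step I expect to demand the most care is the connectedness of $M|(F\cup x)$, where one must produce a circuit through $x$ that bridges it to the already-connected $M|F$; the rest is bookkeeping.
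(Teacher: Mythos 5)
Your proof is correct and follows the same core idea as the paper's one-line proof: the spanning circuit $C$ of $M\del x$ witnessing $F$ as fundamental, since $x$ is not a coloop, remains a spanning circuit of $M$ and witnesses $\cl_M(F)$. You carefully fill in the verification that $\cl_M(F)$ is a pnc-flat of $M$ -- in particular the connectedness of $M|(F\cup x)$ via a fundamental circuit of $x$ with respect to a basis of $F$ -- which the paper leaves implicit.
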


\begin{proof}
  The spanning circuit $C$ of $M\del x$ that shows that $F$ is a
  fundamental flat of $M\del x$ also shows that $\cl_M(F)$ is a
  fundamental flat of $M$.
\end{proof}

\begin{lemma}\label{lem:prepnc}
  Assume $x$ is not a loop of $M$.  If $F$ is a pnc-flat of $M\del x$,
  then exactly one of $F$ and $F\cup x$ is a pnc-flat of $M$.  The
  same conclusion holds if $F$ is a pnc-flat of $M/x$.
\end{lemma}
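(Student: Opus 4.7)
The plan is to split on whether $F$ is a flat of $M$, and in each case verify which of $F$ and $F\cup x$ carries all four pnc-flat properties (proper flat, nontrivial, connected).

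Deletion case: Because $\cl_{M\del x}(F)=F$, we have $\cl_M(F)\cap(E(M)-x)=F$, so $\cl_M(F)$ is either $F$ or $F\cup x$. If $\cl_M(F)=F$, then $F$ is a flat of $M$; dependence transfers from $M\del x$, and $M|F=(M\del x)|F$ is connected, so $F$ is a pnc-flat. Its counterpart $F\cup x$ then fails connectivity: any circuit of $M$ inside $F\cup x$ containing $x$ would force $x\in\cl_M(F)=F$, so $x$ is a coloop (hence its own component) of $M|(F\cup x)$. If instead $\cl_M(F)=F\cup x$, then $F$ is not a flat of $M$, while $F\cup x=\cl_M(F)$ is a flat; it is dependent (it contains the dependent set $F$), and it is connected since $M|F$ is connected and the witness circuit from $x\in\cl_M(F)$ attaches $x$ to $F$ via the common-circuit equivalence relation.

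Contraction case: Here $\cl_{M/x}(F)=F$ translates to $\cl_M(F\cup x)=F\cup x$, so $F\cup x$ is automatically a flat of $M$, and $\cl_M(F)\in\{F,F\cup x\}$ as before. If $\cl_M(F)=F$, then Lemma~\ref{lem:stayconn} yields $M|F=(M/x)|F$, a connected flat of $M$; rank bookkeeping ($r_{M/x}(F)=r_M(F)$) shows $F$ is dependent in $M$ as well, so $F$ is a pnc-flat, while $F\cup x$ again has $x$ as a coloop of the restriction. If $\cl_M(F)=F\cup x$, then $F$ is not a flat; for $F\cup x$, I would exploit the identity $(M|(F\cup x))/x=(M/x)|F$. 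Since the right side is connected and $x\in\cl_M(F)$ prevents $x$ from being a coloop of $M|(F\cup x)$, the restriction $M|(F\cup x)$ is connected; dependence follows from $r_M(F\cup x)=r_M(F)\leq|F|<|F\cup x|$.

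The main obstacle is organizational rather than conceptual: one must track nontriviality and connectivity carefully through the closure dichotomy, relying on the recurring equivalence between `$x\in\cl_M(F)$' and `some circuit of $M$ in $F\cup x$ contains $x$,' which governs whether $x$ merges with $M|F$ into a single connected restriction.
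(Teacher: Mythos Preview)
Your proof is correct and follows essentially the same approach as the paper's. The paper dispatches the deletion case in a single sentence (``evident since $\cl_M(F)$ is either $F$ or $F\cup x$ and $x$ is not a loop''), while for the contraction case it gives exactly your argument: $F\cup x$ is automatically a flat of $M$, and one then splits on whether $x\in\cl_M(F)$, invoking $M/x|F=M|F$ when $x\notin\cl_M(F)$ and the identity $(M|(F\cup x))/x=(M/x)|F$ together with $x$ not being a component of $M|(F\cup x)$ when $x\in\cl_M(F)$.
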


\begin{proof}
  The first assertion is evident since $\cl_M(F)$ is either $F$ or
  $F\cup x$ and $x$ is not a loop.  For the second, note that $F\cup
  x$ is a flat of $M$ since $F$ is a flat of $M/x$.  If $\cl_M(F)=F$,
  then $x$ is an isthmus of $M|F\cup x$; thus, $M|F=M/x|F$, so $F$ is
  a pnc-flat of $M$.  Assume $\cl_M(F)=F\cup x$.  Now $x$ is not a
  component of $M|F\cup x$ but $M|(F\cup x)/x = M/x|F$, which we
  assumed is connected.  Thus, $M|F\cup x$ is connected, so $F\cup x$
  is a pnc-flat.
\end{proof}

\begin{lemma}\label{lem:staycon}
  For $M\in\mathcal{E_L}-\mathcal{E}$, if $F$ is a pnc-flat of $M$ and
  $y\in F$, then $F-y$ is a pnc-flat of $M/y$.  Furthermore, $F-y$ is
  reducible in $M/y$ if and only if $F$ is reducible in $M$.
\end{lemma}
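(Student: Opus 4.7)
The plan is to prove the two statements of the lemma separately. For the first part (that $F - y$ is a pnc-flat of $M/y$), the properties of being a flat, a proper subset, and nontrivial are routine: $\cl_{M/y}(F-y) = \cl_M(F) - y = F - y$ because $F$ is a flat of $M$ containing $y$; properness is inherited from $F \subsetneq E(M)$; and $\eta_{M/y}(F-y) = \eta_M(F) \geq 1$ because $y$ is not a coloop of the connected matroid $M|F$. The real work is to show that $(M/y)|(F-y) = (M|F)/y$ is connected.

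Since $M|F$ is a proper minor of the excluded minor $M$, it lies in $\mathcal{L}$; it is also connected because $F$ is a pnc-flat. If $y$ lies in some spanning circuit $C$ of $M|F$, then $C - y$ is a spanning circuit of $(M|F)/y$, so the latter is connected; thus it suffices to show that $y$ lies in a spanning circuit of $M|F$. By Proposition~\ref{prop:spanning} and Corollary~\ref{cor:charpc}, the only way this can fail is if $M|F = P_y(N_1, N_2)$ for connected nontrivial $N_1, N_2 \in \mathcal{L}$ with $y$ nonterminal in $M|F$; by Corollary~\ref{cor:no2} and Lemma~\ref{lem:ter}, $y$ would then be a terminal element of each $N_i$. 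Ruling out this case is the main obstacle. My approach is to combine the internal parallel-connection structure of $F$ with the fact that $M$ is connected and $M/y$ is connected (Lemma~\ref{lem:minorcon}) while $(M|F)/y$ is disconnected, in order to exhibit a minor of $M$ in $\mathcal{E}$: since $F$ is a proper flat and $M$ is connected, there is a bridging element $z \in E(M) - F$, and by contracting well-chosen elements of $E(M) - F - z$ one can pull $z$ into $\cl(F)$, obtaining a single-element extension of $P_y(N_1, N_2)$ that, via Lemma~\ref{lem:lpmpc2} applied to an appropriate parallel-connection minor, contains a member of $\mathcal{E}$.

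For the ``furthermore'' part, the forward direction is immediate from the first part: if $F = F' \cap F''$ for incomparable pnc-flats $F', F''$ of $M$, then both $F'$ and $F''$ contain $y$, so the first part gives that $F' - y$ and $F'' - y$ are incomparable pnc-flats of $M/y$ with intersection $F - y$. For the reverse, suppose $F - y = G' \cap G''$ for incomparable pnc-flats $G', G''$ of $M/y$. Because $M|F$ is connected, $y \in \cl_M(F - y)$, and since $F - y \subseteq G'$ and $F - y \subseteq G''$ we obtain $y \in \cl_M(G')$ and $y \in \cl_M(G'')$; by Lemma~\ref{lem:prepnc} and its proof, this forces $G' \cup y$ and $G'' \cup y$ (rather than $G'$ and $G''$) to be the pnc-flats of $M$ corresponding to $G'$ and $G''$. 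Their intersection is $(G' \cap G'') \cup y = F$, and they remain incomparable since they differ by $y$ only jointly, so $F$ is reducible in $M$.
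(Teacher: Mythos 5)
The second part of your proof (the ``furthermore'' claim) is correct and matches the paper's argument: both directions hinge on Lemma~\ref{lem:prepnc} and the fact that $y\in\cl_M(F-y)\subseteq\cl_M(G')$ forces the $M$-lift of a pnc-flat $G'$ of $M/y$ to be $G'\cup y$.

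The first part has a genuine gap. You correctly reduce the problem to showing that $(M|F)/y$ is connected and identify that the only obstruction would be $M|F=P_y(N_1,N_2)$ with $y$ nonterminal in $M|F$. But the argument you sketch to rule this out is not carried through, and as stated it does not work. The minor you propose to build is a \emph{single-element extension} of $P_y(N_1,N_2)$ by a bridging element $z$, and you want to apply Lemma~\ref{lem:lpmpc2}. That lemma, however, applies to a matroid that \emph{is} a parallel connection $P_x(M_1,M_2)$; a single-element extension of a parallel connection is generally not one, and no ``appropriate parallel-connection minor'' is identified. It is also not established that the one-element extension you construct fails to be in $\mathcal{L}$ at all, so no contradiction is reached. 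The missing ingredient is Lemma~\ref{lem:minorcon2}, which you never invoke: for $M\in\mathcal{E_L}-\mathcal{E}$, the double minor $M\del x/y$ is connected. The paper's proof fixes $x\in E(M)-F$, takes a path order of $M\del x$ with interval presentation $\mathcal{A}$, and notes that connectivity of $M\del x/y$ forces $y$ to be terminal or to lie in at least two intervals of $\mathcal{A}$ (Corollary~\ref{cor:charpc}). Since $F$ is a pnc-flat of $M\del x$, it is an interval in this path order, so Corollary~\ref{cor:spandel} (or Proposition~\ref{prop:spanning} applied to $M|F$ in the terminal case) yields a spanning circuit of $M|F$ through $y$, giving connectivity of $(M|F)/y$ directly. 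This sidesteps entirely the parallel-connection case analysis you were attempting.

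Your observations that $M|F\in\mathcal{L}$, that $\cl_M(y)=\{y\}$ (via Corollary~\ref{cor:no2}), and that $y$ would be terminal in both sides of any parallel connection (via Lemma~\ref{lem:ter}) are all correct; the proof just needs the stronger connectivity input from Lemma~\ref{lem:minorcon2} rather than an ad hoc excluded-minor construction.
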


\begin{proof}
  For the first part, we need to show that $M|F/y$ is connected.  Fix
  $x \in E(M)-F$.  Take a path order of $M\del x$ and the
  corresponding interval presentation $\mathcal{A}$.  By
  Lemma~\ref{lem:minorcon2}, $M\del x/y$ is connected, so $y$ is
  either a terminal element or in at least two sets in $\mathcal{A}$.
  Thus, $y$ is in a spanning circuit of $M|F$ by
  Corollary~\ref{cor:spandel}, so $M|F/y$ is connected.

  For the second assertion, first assume $F$ is reducible in $M$, so
  $F=G\cap H$ for some incomparable pnc-flats $G$ and $H$ of $M$.  As
  just shown, $G-y$ and $H-y$ are pnc-flats of $M/y$, so their
  intersection, $F-y$, is reducible in $M/y$.  Now assume $F-y$ is
  reducible in $M/y$, so $F-y=G\cap H$ for some incomparable pnc-flats
  $G$ and $H$ of $M/y$.  Since $y\in \cl_M(F-y)$, by
  Lemma~\ref{lem:prepnc} both $G\cup y$ and $H\cup y$ are pnc-flats of
  $M$, so their intersection, $F$, is reducible.
\end{proof}

The same argument proves the next lemma.

\begin{lemma}\label{lem:stayconlp}
  Fix $y\in E(M)$ where $M\in \mathcal{L}$ and both $M$ and $M/y$ are
  connected.  If $F$ is a reducible pnc-flat of $M$ with $y\in F$,
  then $F-y$ is a reducible pnc-flat of $M/y$.
\end{lemma}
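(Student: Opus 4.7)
\emph{Plan.} The proof follows the pattern of Lemma~\ref{lem:staycon}. The main step is to prove the analog of its first assertion: if $F$ is a pnc-flat of $M$ with $y\in F$, then $F-y$ is a pnc-flat of $M/y$. Once this is done, the reducibility claim follows exactly as in the second paragraph of the proof of Lemma~\ref{lem:staycon}: writing $F=G\cap H$ for incomparable pnc-flats $G$ and $H$ of $M$, we have $y\in G$ and $y\in H$, and the analog gives pnc-flats $G-y$ and $H-y$ of $M/y$ that remain incomparable (restoring $y$ would give back the original incomparable $G$ and $H$) and whose intersection is $F-y$.

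To prove the key analog, fix a path order of $M$ with interval presentation $\mathcal{A}$; since $F$ is a pnc-flat it is an interval in this order. It suffices to exhibit a spanning circuit of $M|F$ that contains $y$: that forces $M|F/y$ to be connected, whence $F-y$ is a flat of $M/y$ with the same nullity as $F$ in $M$ (so nontrivial) and a proper subset of $E(M)-y$ (since $F\subsetneq E(M)$), hence a pnc-flat.

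If $y$ lies in at least two intervals of $\mathcal{A}$, Corollary~\ref{cor:spandel} with $I=F$ delivers the desired spanning circuit. Otherwise $y$ lies in exactly one interval; were $y$ nonterminal in $M$, Corollary~\ref{cor:charpc} would give $M=P_y(M_1,M_2)$ with both $E(M_i)-y$ nonempty (since $y$ has elements of $E(M)$ on each side of it in the path order), so Proposition~\ref{prop:pcminors} would make $M/y=(M_1/y)\oplus(M_2/y)$ disconnected, contradicting the hypothesis. Hence $y$ is terminal in $M$, say $y=e_1$ (the case $y=e_n$ is symmetric). Then $y$ is also the smallest element of $F$, so it is a terminal element of the connected matroid $M|F$; in the induced interval presentation of $M|F$ we have $y=a_1$, and Proposition~\ref{prop:spanning} applied to $M|F$ supplies the required spanning circuit.

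The only slightly delicate point is this last case: the connectedness of $M/y$ is used precisely to rule out a nontrivial parallel-connection decomposition of $M$ at $y$, leaving the terminal case in which Proposition~\ref{prop:spanning} finishes the argument.
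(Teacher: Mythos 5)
Your proof is correct and is essentially the argument the paper intends: the paper's proof of this lemma is literally ``The same argument proves the next lemma,'' pointing back to Lemma~\ref{lem:staycon}, and you have correctly adapted that argument by working with a path order of $M$ directly (rather than of $M\del x$, which was needed there because $M\not\in\mathcal{L}$) and by replacing the appeal to Lemma~\ref{lem:minorcon2} with the present hypothesis that $M/y$ is connected to obtain the dichotomy that $y$ is terminal or lies in at least two intervals. Your explicit treatment of the terminal case via Proposition~\ref{prop:spanning}, rather than folding it into Corollary~\ref{cor:spandel}, is a welcome clarification but does not change the substance.
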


\begin{lemma}\label{lem:f=ir}
  A pnc-flat $F$ of $M\in\mathcal{E_L}-\mathcal{E}$ is fundamental if
  and only if it is irreducible.
\end{lemma}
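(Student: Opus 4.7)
For the forward direction, I would argue by contradiction. Suppose $F$ is fundamental with witnessing spanning circuit $C$, and $F = G\cap H$ for incomparable pnc-flats $G, H$. Choose $y \in F \cap C$, which is nonempty since $|F\cap C| = r(F) \geq 2$ by Corollary~\ref{cor:no2}. In $M/y$, the set $C - y$ is a spanning circuit and $(F-y) \cap (C-y) = (F\cap C) - y$ is a basis of $F-y$; combined with Lemma~\ref{lem:staycon}, which shows $F-y$ is a pnc-flat of $M/y$, this makes $F-y$ fundamental in $M/y$. At the same time, $y \in G\cap H$, so Lemma~\ref{lem:staycon} also gives incomparable pnc-flats $G-y, H-y$ of $M/y$ with intersection $F-y$, making $F-y$ reducible in $M/y$. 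This contradicts Corollary~\ref{cor:lpmff=irr} applied to $M/y$, a connected lattice path matroid by Lemma~\ref{lem:minorcon}.

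For the reverse direction, the plan is to show that some single-element deletion preserves irreducibility of $F$. Concretely, I would find $x \in E(M)-F$ with $F$ irreducible in $M\del x$; then Corollary~\ref{cor:lpmff=irr} applied in the connected lattice path matroid $M\del x$ yields a spanning circuit of $M\del x$ witnessing $F$ fundamental there, and since this circuit avoids $x$, it is a spanning circuit of $M$ witnessing $F$ fundamental in $M$ (its intersection with $F$ is unchanged since $x\notin F$). To find such $x$, I would argue by contradiction: assuming $F$ is reducible in $M\del x$ for every $x \in E(M)-F$, write $F = G'\cap H'$ in $M\del x$ for incomparable pnc-flats $G', H'$ and apply Lemma~\ref{lem:prepnc} to lift $G', H'$ to pnc-flats of $M$. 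If either lift does not adjoin $x$, then the two lifts intersect in $F$ in $M$, witnessing reducibility of $F$ in $M$, contrary to assumption; so both must adjoin $x$, producing incomparable pnc-flats $\tilde G_x, \tilde H_x$ of $M$ with $\tilde G_x \cap \tilde H_x = F\cup x$, each strictly containing $F\cup x$ (since $F\cup x$ is disconnected: $x$ is a coloop of $M|(F\cup x)$, while $\tilde G_x$ and $\tilde H_x$ are pnc-flats, hence connected).

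The main obstacle is deriving a contradiction from this structural conclusion. The plan is to pick $x_0 \in E(M)-F$ and $x_1 \in \tilde G_{x_0} - (F\cup x_0)$, noting $x_1 \notin \tilde H_{x_0}$ since $\tilde G_{x_0} \cap \tilde H_{x_0} = F\cup x_0$. Applying the structural conclusion to $x_1$ yields incomparable pnc-flats $\tilde G_{x_1}, \tilde H_{x_1}$ of $M$ with intersection $F\cup x_1$; irreducibility of $F$ forces each of $\tilde H_{x_0} \cap \tilde G_{x_1}$ and $\tilde H_{x_0} \cap \tilde H_{x_1}$ to strictly contain $F$ (else the offending incomparable pair would witness reducibility of $F$). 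Iterating this construction and tracking the shared elements, while using Proposition~\ref{prop:lpmchar} applied in each $M\del x$ to control the two-chain structure from which the lifts $\tilde G_x, \tilde H_x$ arise, I expect to exhibit two incomparable pnc-flats of $M$ whose intersection is exactly $F$, the desired contradiction.
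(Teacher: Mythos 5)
Your forward direction is essentially the paper's argument, just phrased as a direct contradiction rather than via the biconditional in Lemma~\ref{lem:staycon}: you contract an element $y \in F\cap C$, observe that $F-y$ would simultaneously be fundamental and reducible in $M/y \in \mathcal{L}$, and invoke Corollary~\ref{cor:lpmff=irr}. This part is fine.

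The reverse direction has a genuine gap. Your endgame is the same as the paper's — exhibit some $x \notin F$ with $F$ irreducible in $M\del x$, apply Corollary~\ref{cor:lpmff=irr} there, then lift back to $M$ — but you do not succeed in producing such an $x$. Your setup (assume $F$ reducible in every $M\del x$, lift the two witnesses via Lemma~\ref{lem:prepnc}, observe both lifts must pick up $x$ lest $F$ be reducible in $M$, conclude $\tilde G_x \cap \tilde H_x = F\cup x$ with each lift strictly larger) is correct as far as it goes, but the ``iterate and track shared elements'' step is left entirely to hope: you say you ``expect to exhibit'' the contradiction without giving the argument, and it is not at all clear that the bookkeeping closes up, since the pairs $(\tilde G_x, \tilde H_x)$ are not canonical, can overlap in complicated ways, and nothing forces the chain of pairwise intersections to stabilize at exactly $F$.

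The paper avoids this difficulty by taking a detour through a contraction rather than attacking the deletions directly. It first passes to $M/y$ for some $y\in F$; there $F-y$ is an irreducible pnc-flat (Lemma~\ref{lem:staycon}), hence a fundamental flat (Corollary~\ref{cor:lpmff=irr}), and by Proposition~\ref{prop:fundfl} one can choose a path order of $M/y$ in which $F-y$ contains the first element $e_1$. Now for \emph{any} $x\notin F$, the set $F-y$ is still a pnc-flat of $M/y\del x$ containing $e_1$; since every pnc-flat of a connected LPM is an interval in the path order, and only fundamental flats contain a terminal element, $F-y$ is automatically fundamental (hence irreducible) in $M/y\del x = M\del x/y$. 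Lemma~\ref{lem:stayconlp} then pushes irreducibility up to $F$ in $M\del x$, and Corollary~\ref{cor:lpmff=irr} and Lemma~\ref{lem:ff1} finish. The crucial structural input you are missing is exactly this: the path-order geometry of $M/y$ pins $F-y$ to a terminal end, which makes irreducibility persist under \emph{every} deletion of an element outside $F$, sidestepping any need to search for or construct a good $x$.
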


\begin{proof}
  Assume $F$ is fundamental in $M$.  Thus, $M$ has a spanning circuit
  $C$ so that $F\cap C$ is a basis of $F$.  Fix $y\in F\cap C$.  By
  Lemma~\ref{lem:staycon}, $F-y$ is a pnc-flat of $M/y$.  Now $C-y$ is
  a spanning circuit of $M/y$ and $(C-y)\cap (F-y)$ is a basis of
  $F-y$ in $M/y$, so $F-y$ is a fundamental flat of $M/y$.  Since
  $M/y\in \mathcal{L}$, it follows that $F-y$ is irreducible in $M/y$.
  Therefore, by Lemma~\ref{lem:staycon}, $F$ is irreducible in $M$.

  Now assume $F$ is irreducible in $M$.  Fix $y\in F$.  The
  irreducible pnc-flat $F-y$ of $M/y$ is fundamental by
  Corollary~\ref{cor:lpmff=irr}.  By Proposition~\ref{prop:fundfl}, we
  may assume the first element, $e_1$, in a given path order of $M/y$
  is in $F-y$. Fix $x\not\in F$.  Since the pnc-flat $F-y$ of $M/y\del
  x$ contains $e_1$, it is fundamental in $M/y\del x$ by
  Proposition~\ref{prop:lpmchar}.  Thus, $F-y$ is irreducible in
  $M\del x/y$, so by Lemma~\ref{lem:stayconlp}, the pnc-flat $F$ of
  $M\del x$ is irreducible and so fundamental.  Thus, by
  Lemma~\ref{lem:ff1}, $F$ is fundamental in $M$.
\end{proof}

\begin{cor}\label{cor:ff2}
  For $M\in\mathcal{E_L}-\mathcal{E}$, if $F$ is a fundamental flat of
  $M$, then, for all $y\in F$, the set $F-y$ is a fundamental flat of
  $M/y$.
\end{cor}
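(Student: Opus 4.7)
The plan is to observe that this corollary is essentially a direct repackaging of the ingredients already developed. The key point is that for $M\in\mathcal{E_L}-\mathcal{E}$ and any $y\in E(M)$, the contraction $M/y$ is a proper minor of an excluded minor of $\mathcal{L}$, so $M/y\in\mathcal{L}$. Moreover, by Lemma~\ref{lem:minorcon}, $M/y$ is connected. Thus $M/y$ satisfies the hypothesis of Corollary~\ref{cor:lpmff=irr}, which tells us that its fundamental flats are exactly its irreducible pnc-flats.

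With that in hand, I would argue as follows. Fix $y\in F$. By Lemma~\ref{lem:f=ir}, the fundamental flat $F$ of $M$ is irreducible. By Lemma~\ref{lem:staycon}, the set $F-y$ is a pnc-flat of $M/y$, and the reducibility equivalence in that lemma (applied contrapositively) forces $F-y$ to be irreducible in $M/y$. Since $M/y\in\mathcal{L}$ is connected, Corollary~\ref{cor:lpmff=irr} implies that $F-y$, being an irreducible pnc-flat of $M/y$, is a fundamental flat of $M/y$.

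There is no real obstacle here; the work was already done when setting up Lemma~\ref{lem:staycon} and Lemma~\ref{lem:f=ir}. The one thing worth double-checking is that $F-y$ indeed qualifies as a pnc-flat of $M/y$ (not merely a connected set), which is exactly the content of the first sentence of Lemma~\ref{lem:staycon}; in particular $F-y$ is properly contained in $E(M/y)$ because $F\subsetneq E(M)$, and it is nontrivial because $M|F/y$ being connected on a set of rank $r(F)-1\geq 0$ combined with the exclusion of $2$-circuits from $M$ (Corollary~\ref{cor:no2}) ensures $|F-y|>1$ whenever $r(F)\geq 2$, while the case $r(F)=1$ cannot arise for a fundamental flat containing $y$ since $F$ is nontrivial and $M$ has no $2$-circuits. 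Once these sanity checks are noted, the corollary follows in a line or two.
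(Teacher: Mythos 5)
Your proposal is correct and follows the route the paper intends: translate "fundamental" into "irreducible" via Lemma~\ref{lem:f=ir}, push irreducibility through contraction via Lemma~\ref{lem:staycon}, and translate back via Corollary~\ref{cor:lpmff=irr} using that $M/y\in\mathcal{L}$ is connected (Lemma~\ref{lem:minorcon}). The sanity checks you add (that $F-y$ is a proper nontrivial flat, using Corollary~\ref{cor:no2} to rule out $r(F)=1$) are harmless but already packaged in the statement of Lemma~\ref{lem:staycon}.
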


\begin{lemma}\label{lem:compfun}
  For $M\in\mathcal{E_L}-\mathcal{E}$, a proper nonempty subset $F$ of
  $E(M)$ is a fundamental flat of $M$ if and only if $E(M)-F$ is a
  fundamental flat of $M^*$.
\end{lemma}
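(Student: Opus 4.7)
The plan is to reduce the claim to Proposition~\ref{prop:dualfund} by passing through a single-element contraction and lifting back to $M^*$ via Lemma~\ref{lem:ff1}. Assume $F$ is a fundamental flat of $M$. By Corollary~\ref{cor:no2}, $M$ has no $2$-circuits, so every pnc-flat of $M$ has rank at least two; pick any $y\in F$. Corollary~\ref{cor:ff2} then says that $F-y$ is a fundamental flat of $M/y$, and Lemma~\ref{lem:minorcon} gives that $M/y$ is connected. Since $M/y\in\mathcal{L}$, Proposition~\ref{prop:dualfund} shows that $E(M)-F=E(M/y)-(F-y)$ is a fundamental flat of $(M/y)^* = M^*\del y$.

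To promote this to a fundamental flat of $M^*$, I would apply Lemma~\ref{lem:ff1}: both $M^*$ and $M^*\del y = (M/y)^*$ are connected, being duals of connected matroids, so $\cl_{M^*}(E(M)-F)$ is a fundamental flat of $M^*$. The crux is to show that this closure equals $E(M)-F$ itself, so that no element (in particular $y$) is added. For this, I would observe that $F$ is a cyclic flat of $M$: since $F$ is a connected flat of rank at least two, $M|F$ is connected, so every element of $F$ lies in a circuit contained in $F$. By the cyclic-flat duality lemma in Section~\ref{sec:background}, $E(M)-F$ is then a cyclic flat of $M^*$, and in particular a flat of $M^*$; so $\cl_{M^*}(E(M)-F)=E(M)-F$, and hence $E(M)-F$ is a fundamental flat of $M^*$.

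For the converse, I would use that $\mathcal{L}$ and the list $\mathcal{E}$ of excluded minors in Theorem~\ref{thm:list} are each closed under duality, so $M^*\in\mathcal{E_L}-\mathcal{E}$. Applying the forward direction to $M^*$ and its fundamental flat $E(M)-F$ recovers $F = E(M^*)-(E(M)-F)$ as a fundamental flat of $(M^*)^* = M$. The main obstacle is the closure step above: without the observation that $F$ is cyclic in $M$, Lemma~\ref{lem:ff1} would only guarantee that $E(M)-F$ \emph{or} $(E(M)-F)\cup\{y\}$ is a fundamental flat of $M^*$, and the precise complementary description would fail.
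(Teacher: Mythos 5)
Your proof takes essentially the same route as the paper: contract $y\in F$, use Corollary~\ref{cor:ff2} and Proposition~\ref{prop:dualfund} to see that $E(M)-F$ is fundamental in $M^*\backslash y$, lift via Lemma~\ref{lem:ff1}, and resolve the closure ambiguity by noting that $E(M)-F$ is a cyclic (hence closed) set of $M^*$ because $F$ is a cyclic flat of $M$; the converse follows by duality. The only minor cosmetic difference is that you invoke Corollary~\ref{cor:no2} to argue cyclicity, whereas the paper treats it as immediate from the standing fact that pnc-flats are cyclic.
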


\begin{proof}
  Let $F$ be a fundamental flat of $M$.  Thus, $F$ is a cyclic flat of
  $M$, so $E(M)-F$ is a cyclic flat of $M^*$.  Fix $y\in F$.  By
  Corollary~\ref{cor:ff2}, $F-y$ is a fundamental flat of $M/y$.
  Since $M/y\in \mathcal{L}$, using Proposition~\ref{prop:dualfund},
  $E(M)-F$ is a fundamental flat of $(M/y)^*$, that is, $M^*\del y$.
  By Lemma~\ref{lem:ff1}, $\cl_{M^*}\bigl(E(M)-F\bigr)$, which is
  $E(M)-F$, is a fundamental flat of $M^*$.  The other implication
  follows by duality.
\end{proof}

\begin{cor}\label{cor:confun1}
  For $M\in\mathcal{E_L}-\mathcal{E}$, if $F$ is a fundamental flat of
  $M/x$, then exactly one of $F$ and $F\cup x$ is a fundamental flat
  of $M$.
\end{cor}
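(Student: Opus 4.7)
The plan is to proceed by duality, which cleanly avoids an asymmetric case that would complicate a direct argument. Since $M/x \in \mathcal{L}$ is connected and $F$ is fundamental in $M/x$, Proposition~\ref{prop:dualfund} gives that $E(M)-F-x = E(M/x)-F$ is a fundamental flat of $(M/x)^* = M^* \del x$. Since $\mathcal{E_L}-\mathcal{E}$ is closed under duality, $M^* \in \mathcal{E_L}-\mathcal{E}$, and by Lemma~\ref{lem:minorcon} the deletion $M^* \del x$ is connected. Applying Lemma~\ref{lem:ff1} to $M^*$ and $M^* \del x$, the closure $\cl_{M^*}(E(M)-F-x)$, which is either $E(M)-F-x$ or $E(M)-F$, is a fundamental flat of $M^*$.

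Now Lemma~\ref{lem:compfun}, applied to $M^*$, gives that the complement in $E(M)$ of any fundamental flat of $M^*$ is a fundamental flat of $M^{**}=M$. In the two cases above the complement is $F \cup x$ or $F$, respectively, so at least one of $F$ and $F \cup x$ is fundamental in $M$. For uniqueness, fundamental flats are pnc-flats, and Lemma~\ref{lem:prepnc}---applicable since $F$, being fundamental in $M/x$, is a pnc-flat of $M/x$---ensures that exactly one of $F$ and $F \cup x$ is a pnc-flat of $M$. Hence exactly one is fundamental in $M$.

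The main obstacle I would expect in a direct approach---showing that the pnc-flat of $M$ among $F$ and $F \cup x$ supplied by Lemma~\ref{lem:prepnc} is irreducible in $M$, then invoking Lemma~\ref{lem:f=ir}---is the asymmetry between the two cases. When the pnc-flat is $F \cup x$, a decomposition $F \cup x = G \cap H$ into incomparable pnc-flats of $M$ descends to $M/x$ via Lemma~\ref{lem:staycon} and contradicts irreducibility of $F$ in $M/x$. But when the pnc-flat is $F$ (i.e., $x \notin \cl_M(F)$), a hypothetical decomposition $F = G \cap H$ does not automatically descend, since a pnc-flat $G$ of $M$ with $x \notin G$ can have $\cl_{M/x}(G) = E(M/x)$ and thus fail to yield a pnc-flat of $M/x$. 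The duality detour collapses both cases into a single uniform invocation of Lemma~\ref{lem:ff1}.
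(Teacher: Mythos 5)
Your proof is correct and follows what is evidently the paper's intended route: dualize, push $F$ through Proposition~\ref{prop:dualfund} to a fundamental flat of $M^*\del x$, lift it to $M^*$ via Lemma~\ref{lem:ff1}, return to $M$ via Lemma~\ref{lem:compfun}, and settle the ``exactly one'' with Lemma~\ref{lem:prepnc}. Your closing remark about why the direct irreducibility argument is asymmetric (the case $x\notin\cl_M(F)$) is also apt and correctly identifies why the dual detour is cleaner.
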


\begin{cor}\label{cor:confun2}
  For $M\in\mathcal{E_L}-\mathcal{E}$, if $F$ is a fundamental flat of
  $M$, then, for all $z\not\in F$, the set $F$ is a fundamental flat
  of $M\del z$.
\end{cor}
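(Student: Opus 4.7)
The plan is to combine duality with Corollary~\ref{cor:ff2} in a short chain, exploiting the fact that $\mathcal{E_L}-\mathcal{E}$ is closed under duality (as noted at the start of Section~\ref{sec:exlpm}, since both $\mathcal{L}$ and the list $\mathcal{E}$ are stable under taking duals). The whole argument is essentially: translate across duality, use the already-proved contraction statement, and translate back.

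First, I would apply Lemma~\ref{lem:compfun}: since $F$ is a fundamental flat of $M$, the complement $E(M)-F$ is a fundamental flat of $M^*$, and the hypothesis $z\notin F$ puts $z$ into this complement. Applying Corollary~\ref{cor:ff2} to $M^*\in\mathcal{E_L}-\mathcal{E}$ with the element $z$ then yields that $(E(M)-F)-z = E(M\del z)-F$ is a fundamental flat of $M^*/z = (M\del z)^*$. To close the loop, I would invoke Proposition~\ref{prop:dualfund} on $M\del z$ itself: since $M$ is an excluded minor of $\mathcal{L}$, the proper minor $M\del z$ lies in $\mathcal{L}$, and by Lemma~\ref{lem:minorcon} it is connected. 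Proposition~\ref{prop:dualfund} then identifies the fundamental flats of $(M\del z)^*$ with the set complements in $E(M\del z)$ of the fundamental flats of $M\del z$. Taking complements, $F$ is a fundamental flat of $M\del z$, as required.

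I do not expect a real obstacle: the proof is a three-step duality shuffle riding on the heavy lifting already done in Lemmas~\ref{lem:minorcon}, \ref{lem:f=ir} and~\ref{lem:compfun} and in Corollary~\ref{cor:ff2}. The only point to verify in passing is that $F$ remains a proper nonempty subset of $E(M\del z)$, which is immediate from $z\notin F$ together with $F$ being a pnc-flat of $M$.
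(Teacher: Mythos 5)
Your proof is correct and is precisely the duality argument the paper intends: the chain Lemma~\ref{lem:compfun} (pass to $M^*$), then Corollary~\ref{cor:ff2} (contract $z$ in $M^*$), then Proposition~\ref{prop:dualfund} applied to the connected matroid $M\del z\in\mathcal{L}$, returns $F$ as a fundamental flat of $M\del z$. You also correctly note the two side conditions (connectedness of $M\del z$ via Lemma~\ref{lem:minorcon}, and that $F$ remains a proper nonempty subset of $E(M\del z)$), so nothing is missing.
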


These results also yield a near-counterpart of property (ii) of
Proposition~\ref{prop:lpmchar}.

\begin{lemma}\label{lem:fewpoints}
  For $M\in\mathcal{E_L}-\mathcal{E}$, if $F$ and $G$ are incomparable
  fundamental flats of $M$ with $F\cap G\ne\emptyset$, then
  $|E(M)-(F\cup G)|\leq 1$; also, if $|F\cap G|\geq 2$, then
  $E(M)=F\cup G$.
\end{lemma}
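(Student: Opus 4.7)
The plan is to reduce each assertion to property (ii) of Proposition~\ref{prop:lpmchar} applied inside a suitable proper minor of $M$. Since $M$ is an excluded minor, every proper minor of $M$ lies in $\mathcal{L}$, and by Lemma~\ref{lem:minorcon} the relevant single-element deletions and contractions of $M$ are connected, so Proposition~\ref{prop:lpmchar} is available in them. The transfer of fundamental flats between $M$ and these minors is supplied by Corollaries~\ref{cor:ff2} and~\ref{cor:confun2}. One only needs to note that the incomparability of $F$ and $G$ in $M$ is inherited by their images in the minor, so they cannot lie in the same chain of fundamental flats of that minor, and hence property (ii) applies.

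First I would treat the stronger assertion, when $|F\cap G|\geq 2$. Pick $y\in F\cap G$ and pass to $M/y$. By Corollary~\ref{cor:ff2}, $F-y$ and $G-y$ are fundamental flats of $M/y$; they are nonempty and incomparable (if $F-y\subsetneq G-y$, then, since $y\in G$, $F\subseteq G$, contradicting incomparability), and $(F-y)\cap(G-y)=(F\cap G)-y$ is nonempty because $|F\cap G|\geq 2$. Since $M/y\in\mathcal{L}$ is connected, Proposition~\ref{prop:lpmchar}(ii) forces $(F-y)\cup(G-y)=E(M/y)$, i.e.\ $F\cup G=E(M)$, as required.

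Next I would handle the weaker assertion when $|F\cap G|=1$, arguing by contradiction. Suppose there are distinct $x,z\in E(M)-(F\cup G)$. By Corollary~\ref{cor:confun2}, both $F$ and $G$ are fundamental flats of the connected lattice path matroid $M\del x$; they remain incomparable, and $F\cap G\ne\emptyset$. Proposition~\ref{prop:lpmchar}(ii) then yields $F\cup G=E(M\del x)=E(M)-x$, contradicting the existence of $z\ne x$ in $E(M)-(F\cup G)$. This gives $|E(M)-(F\cup G)|\leq 1$, completing the lemma.

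I do not anticipate a genuine obstacle: the structural and minor-theoretic work has already been done in the earlier lemmas of the subsection. The only point that requires a moment of care is verifying that $F-y,G-y$ (respectively $F,G$) remain incomparable in the minor, so that Proposition~\ref{prop:lpmchar}(ii)—which speaks of flats from two different chains—is legitimately applicable; the short set-theoretic observation above handles this.
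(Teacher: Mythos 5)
Your proof is correct and takes essentially the same route as the paper: delete an element outside $F\cup G$ and apply property (ii) of Proposition~\ref{prop:lpmchar} via Corollary~\ref{cor:confun2} to get the bound, and contract an element of $F\cap G$ and apply property (ii) via Corollary~\ref{cor:ff2} to get the second assertion. The paper states this more tersely (and omits the explicit check that incomparability persists in the minor, which is a routine verification you rightly supply).
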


\begin{proof}
  The inequality holds since if $y,z\in E(M)-(F\cup G)$, then $M\del
  z$ and its fundamental flats $F$ and $G$ would contradict property
  (ii) of Proposition~\ref{prop:lpmchar}.  Similarly, property (ii)
  applied to $M/x$, for $x\in F\cap G$, gives the second assertion.
\end{proof}

We the next lemma follows easily from the perspective of
irreducibility.

\begin{lemma}\label{lem:tocir}
  Let $F$ be a fundamental flat of $M\in\mathcal{E_L}-\mathcal{E}$.
  If $C$ is a spanning circuit of $M|F$ and $u\in F-C$, then $F-u$ is
  a fundamental flat of $M\del u$.
\end{lemma}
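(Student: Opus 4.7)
The plan is to invoke the equivalence of \emph{fundamental} and \emph{irreducible pnc-flat}: it holds in $M$ by Lemma~\ref{lem:f=ir} and in $M\del u$ by Corollary~\ref{cor:lpmff=irr}, because $M\del u\in\mathcal{L}$ is connected by Lemma~\ref{lem:minorcon}. Thus it suffices to show that $F-u$ is an irreducible pnc-flat of $M\del u$.

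First, I would check that $F-u$ is a pnc-flat of $M\del u$. Since $u\not\in C$, the circuit $C$ lies in $F-u$ and still spans $F$, so $r_M(F-u)=r_M(F)$ and $C$ is a spanning circuit of $(M\del u)|(F-u)$; hence this restriction is connected and nontrivial. As $F$ is a flat of $M$, $F-u$ is a flat of $M\del u$, and it is proper because $F$ is. A key observation I will record for later use: since $C\subseteq F-u$ spans $F$, we have $u\in\cl_M(F-u)$, so $F-u$ is \emph{not} a flat of $M$.

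For irreducibility, suppose for contradiction that $F-u=G\cap H$ for incomparable pnc-flats $G,H$ of $M\del u$. By Lemma~\ref{lem:prepnc}, for each of $G,H$ exactly one of it or its union with $u$ is a pnc-flat of $M$; by the proof of that lemma, the case is determined by whether $u$ lies in the $M$-closure. I claim the lift must add $u$ in both cases. Indeed, suppose $G$ itself were a flat of $M$, so $u\not\in G$. Regardless of whether $H$ or $H\cup u$ is the pnc-flat of $M$ extending $H$, the intersection with $G$ coincides set-theoretically with $G\cap H = F-u$ (since $u\not\in G$); this exhibits $F-u$ as an intersection of two flats of $M$, hence a flat of $M$, contradicting the observation above. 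By symmetry, both $G\cup u$ and $H\cup u$ must be pnc-flats of $M$. Their intersection is $(G\cap H)\cup u = F$, and they are incomparable: if $G\cup u\subseteq H\cup u$, then $G\subseteq H\cup u$, and since $u\not\in G$ we would get $G\subseteq H$, contrary to the choice of $G,H$. Hence $F$ is reducible in $M$, contradicting Lemma~\ref{lem:f=ir}.

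The main obstacle is simply the case analysis that forces the $M$-lift of each of $G,H$ to contain $u$; this becomes routine once one exploits that $F-u$ fails to be a flat of $M$. Concluding: $F-u$ is an irreducible pnc-flat of $M\del u$, hence a fundamental flat of $M\del u$ by Corollary~\ref{cor:lpmff=irr}.
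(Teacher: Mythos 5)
Your proof is correct, and it uses precisely the route the paper indicates: the paper states (with a typo in the lead-in sentence) that the lemma ``follows easily from the perspective of irreducibility'' and gives no further argument, so you have simply filled in the intended details. Transferring the problem via Lemma~\ref{lem:f=ir} and Corollary~\ref{cor:lpmff=irr} to irreducibility, checking $F-u$ is a pnc-flat of $M\del u$ via the spanning circuit $C$, observing that $F-u$ is not a flat of $M$, and then lifting a putative reduction $G\cap H$ of $F-u$ through Lemma~\ref{lem:prepnc} to a reduction $(G\cup u)\cap(H\cup u)$ of $F$ is exactly the ``perspective of irreducibility'' the author had in mind.
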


\begin{lemma}\label{lemma:no3}
  No three fundamental flats of $M\in\mathcal{E_L}-\mathcal{E}$ are
  mutually incomparable.
\end{lemma}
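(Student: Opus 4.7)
Suppose for contradiction that $M \in \mathcal{E_L} - \mathcal{E}$ admits three mutually incomparable fundamental flats $F_1, F_2, F_3$. The plan is to produce a proper minor $N$ of $M$ that also has three mutually incomparable fundamental flats; since $N$ is a proper minor of an excluded minor it lies in $\mathcal{L}$, and by Lemma~\ref{lem:minorcon} $N$ is connected, so Proposition~\ref{prop:lpmchar}(i) is violated, giving the desired contradiction.

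Two preliminary reductions restrict the configuration of $\{F_1, F_2, F_3\}$. If $z \in E(M) \setminus (F_1 \cup F_2 \cup F_3)$, then by Corollary~\ref{cor:confun2} each $F_i$ remains a fundamental flat of $M \del z$, which is the forbidden minor. If $y \in F_1 \cap F_2 \cap F_3$, then by Corollary~\ref{cor:ff2} each $F_i - y$ is a fundamental flat of $M/y$, and an inclusion $F_i - y \subseteq F_j - y$ together with $y \in F_j$ would force $F_i \subseteq F_j$; incomparability survives, and $M/y$ is the forbidden minor. Hence $E(M) = F_1 \cup F_2 \cup F_3$ and $F_1 \cap F_2 \cap F_3 = \emptyset$.

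Introducing the atoms $A_i = F_i \setminus (F_j \cup F_k)$ and $A_{ij} = F_i \cap F_j$, one has $F_i = A_i \cup A_{ij} \cup A_{ik}$, with $E(M)$ the disjoint union of these six sets. By Corollary~\ref{cor:no2}, $M$ is simple and $|F_i| \geq 3$; Lemma~\ref{lem:fewpoints} forces $|A_k| \leq 1$ whenever $A_{ij} \neq \emptyset$, and $A_k = \emptyset$ whenever $|A_{ij}| \geq 2$. Using incomparability of the $F_i$, a short case check narrows the atom configuration to two essentials: (I) the three $F_i$ are pairwise disjoint, or (II) every pair of them meets. In configuration (I), Lemma~\ref{lem:tocir} applied to $F_1$, with $u \in F_1 \setminus C$ for some spanning circuit $C$ of $M|F_1$ (which exists by Proposition~\ref{prop:spanning}, and $u$ exists since $F_1$ is dependent), makes $F_1 - u$ a fundamental flat of $M \del u$; Corollary~\ref{cor:confun2} keeps $F_2$ and $F_3$ fundamental; pairwise disjointness preserves mutual incomparability, so $M \del u$ is the forbidden minor.

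The subcase of configuration (II) with all $A_i = \emptyset$ is handled via Lemma~\ref{lem:compfun}: the dual atoms in $M^* \in \mathcal{E_L} - \mathcal{E}$ satisfy $A_i^{M^*} = A_{jk}^M \neq \emptyset$ and $A_{ij}^{M^*} = A_k^M = \emptyset$, so the three dual fundamental flats of $M^*$ are pairwise disjoint, placing $M^*$ in configuration (I); the argument just given applied to $M^*$ supplies a forbidden minor of $M^*$ and hence, via the duality between $M^* \del u$ and $(M/u)^*$, a forbidden minor $M/u$ of $M$. The remaining subcase, configuration (II) with some $A_i \neq \emptyset$, is delicate: Lemma~\ref{lem:tocir} can fail to produce a fundamental flat of the required form (for instance when some $F_i$ is a circuit, as in $\mathcal{W}_3$). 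Here the plan is to pin $M$ down by combining the atom bounds from Lemma~\ref{lem:fewpoints} and Corollary~\ref{cor:no2} with rank computations in the single-element minors $M/x$ for $x \in A_{ij}$ and $M \del z$ for $z \in A_i$, each of which lies in $\mathcal{L}$, using Proposition~\ref{prop:lpmchar}(iii)-(iv) to determine the ranks of the $F_i$ and the $A_{ij}$. This forces $M$ to match one of $\mathcal{W}_3, \mathcal{W}^3, B_{n,k}, C_{n+k,k}$, contradicting $M \notin \mathcal{E}$. The main obstacle is this classification step: one must systematically rule out every configuration of atom sizes and ranks that does not fall in $\mathcal{E}$, which requires careful bookkeeping across the various single-element minors.
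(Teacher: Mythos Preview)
Your reduction to $E(M)=F_1\cup F_2\cup F_3$ and $F_1\cap F_2\cap F_3=\emptyset$, and the dichotomy (I)/(II), match the paper's opening moves. However, the handling of configuration (I) contains a genuine error, and this error propagates.

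In configuration (I) you write that ``$u$ exists since $F_1$ is dependent''. This is false: a pnc-flat can itself be a circuit, in which case its unique spanning circuit is $F_1$ and $F_1\setminus C=\emptyset$. (More generally, $|F_1\setminus C|=\eta(F_1)-1$, which vanishes exactly when $F_1$ is a circuit.) So your deletion argument via Lemma~\ref{lem:tocir} only works when at least one $F_i$ is not a circuit, and you give no argument for that. The gap is not cosmetic: the matroids $B_{n,k}$ live precisely in configuration (I) with all three $F_i$ circuits, so no ``smaller minor with three incomparable fundamental flats'' can exist there, and the shrinking strategy must fail.

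This gap then infects your treatment of configuration (II) with all $A_i=\emptyset$, since you reduce it by duality to configuration (I). Under that duality, $A_{jk}=E(M)-F_i$ is a circuit of $M^*$ exactly when $F_i$ is a hyperplane of $M$; the paper in fact proves that in configuration (II) with all $|A_{ij}|\geq 2$ every $F_i$ \emph{is} a hyperplane, so after dualizing you are always in the bad case of (I) and your argument never applies. Again this is forced: the matroids $C_{n+k,k}$ live here.

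The upshot is that the ``find a smaller minor'' strategy cannot finish the proof on its own; at the bottom you must identify $M$ with a member of $\mathcal{E}$. The paper does exactly this: after the same preliminary reductions it splits on whether some $|F_i\cap F_j|=1$ (forcing $M\in\{\mathcal{W}_3,\mathcal{W}^3\}$) or all $|F_i\cap F_j|\geq 2$ (forcing each $F_i$ to be a hyperplane, then showing $F_1,F_2,F_3$ are the only pnc-flats and at least two are circuits, whence $M=C_{n+k,k}$). Your final paragraph gestures at this classification for the sub-case of (II) with some $A_i\neq\emptyset$, but you will need it equally for the cases you thought were already done.
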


\begin{proof}
  To the contrary, assume $F_1,F_2,F_3$ are mutually incomparable
  fundamental flats of $M$.  We will derive the contradiction $M\in
  \mathcal{E}$.

  If $F_1$, $F_2$, and $F_3$ are mutually disjoint, we could work
  instead with $M^*$, in which, by Lemma~\ref{lem:compfun}, the
  complements of these sets are (non-disjoint) fundamental flats.
  Thus, we may assume $F_1\cap F_2\ne \emptyset$.  By
  Lemma~\ref{lem:fewpoints}, $|E(M)-(F_1\cup F_2)|\leq 1$, so $F_3\cap
  F_1\ne \emptyset$ and $F_3\cap F_2\ne \emptyset$.
  Corollary~\ref{cor:confun2} gives $F_1\cup F_2\cup F_3 = E(M)$, for
  otherwise deleting an element not in $F_1\cup F_2\cup F_3$ would
  give a matroid in $\mathcal{L}$ with three incomparable fundamental
  flats, which is impossible.  Similarly, $F_1\cap F_2\cap
  F_3=\emptyset$ by Corollary~\ref{cor:ff2}.
   
  Assume $|F_1\cap F_2|=1$.  The connected flat $F_1$ is not the union
  of the flat $F_1\cap F_3$ and the singleton $F_1\cap F_2$, so $|F_1
  - (F_2\cup F_3)|=1$ by Lemma~\ref{lem:fewpoints}.  Similarly, $|F_2
  - (F_1\cup F_3)|=1$.  These conclusions and
  Lemma~\ref{lem:fewpoints} give $|F_1\cap F_3|=|F_2\cap F_3|=1$, so
  $F_1$, $F_2$, and $F_3$ are $3$-circuits.  It follows that $M$ is
  either $\mathcal{W}_3$ or $\mathcal{W}^3$, contrary to
  $M\not\in\mathcal{E}$.

  Assume $|F_i\cap F_j|\geq 2$ whenever $\{i,j,k\} = \{1,2,3\}$, so
  $E(M)=F_i\cup F_j$ and $$F_i = (F_i\cap F_j)\cup (F_i\cap F_k) =
  E(M)-(F_j\cap F_k).$$ We claim that none of $F_1$, $F_2$, $F_3$ is
  properly contained in a fundamental flat, so none of them is
  properly contained in any pnc-flat.  To see this, assume, for
  instance, $F_1\subseteq F'_1$ where $F'_1$ is a fundamental flat.
  Since $F_1\cup F_i=E(M)$ for $i\in \{2,3\}$, any inclusion between
  $F'_1$ and either $F_2$ or $F_3$ would give the contradiction that
  the larger is $E(M)$.  Thus, $F'_1, F_2, F_3$ are mutually
  incomparable, so the arguments above apply to $F'_1, F_2, F_3$;
  however, this gives $F'_1=E(M)-(F_2\cap F_3)=F_1$.

  We claim that $F_1$ is a hyperplane.  To see this, fix $x\in F_2\cap
  F_3$.  Both $F_2-x$ and $F_3-x$ are fundamental flats of $M/x$ by
  Corollary~\ref{cor:ff2}.  If $F_1$ were not a hyperplane, then
  $\cl_{M/x}(F_1)$ would be a pnc-flat of $M/x$; furthermore,
  $\cl_{M/x}(F_1)$ is not properly contained in any pnc-flat of $M/x$,
  so it would be a fundamental flat of $M/x$.  However, $M/x \in
  \mathcal{L}$ cannot have three incomparable fundamental flats, so we
  may assume $F_2-x\subseteq \cl_{M/x}(F_1)$.  Since $E(M)=F_1\cup
  F_2$, we get $\cl_M(F_1\cup x)=E(M)$, so $F_1$ actually is a
  hyperplane of $M$.  By symmetry, $F_2$ and $F_3$ are also
  hyperplanes.

  We claim that $F_1$, $F_2$, $F_3$ are the only pnc-flats of $M$.  If
  such exists, consider a fundamental flat $F\not\in\{ F_1, F_2,
  F_3\}$.  If $F$ were incomparable to two of $F_1$, $F_2$, $F_3$, say
  to $F_2$ and $F_3$, then applying the arguments above to the triple
  $F,F_2,F_3$ would give the contradiction $F=E(M)-(F_2\cap F_3)=F_1$.
  Thus, any fundamental flat (and so any pnc-flat) of $M$ other than
  $F_1,F_2,F_3$ is a subset of two of these, so assume $F\subsetneq
  F_1\cap F_2$.  If such exists, let $F'\not\in\{ F_1, F_2, F_3\}$ be
  a fundamental flat of $M$ with $F'$ incomparable to $F$.  If
  $F'\not\subseteq F_1\cap F_2$, then $F\cap F'=\emptyset$; if
  $F'\subsetneq F_1\cap F_2$, then again $F\cap F'=\emptyset$ since
  $F$ and $F'$ must be fundamental flats of $M|F_1$ yet $F\cup F\ne
  F_1$.  Therefore, by Corollary~\ref{cor:safedel}, for any $x$ in a
  smallest fundamental flat in $F_1\cap F_2$, both $M|F_1\del x$ and
  $M|F_2\del x$ are connected, so $M\del x$ would have three
  incomparable fundamental flats ($F_1-x$, $F_2-x$, and $F_3$), which
  is impossible since $M\del x \in \mathcal{L}$.  Thus, $F_1$, $F_2$,
  $F_3$ are the only fundamental flats of $M$.  To see that they are
  the only pnc-flats of $M$, note that if, say, $F_1\cap F_2$ were
  connected, then, by Corollary~\ref{cor:safedel}, for any $x \in
  F_1\cap F_2$, both $M|F_1\del x$ and $M|F_2\del x$ would be
  connected, leading to the same contradiction.

  Thus, $M|F_1$, $M|F_2$, and $M|F_3$ are uniform matroids.  Note that
  at least two of $F_1$, $F_2$, $F_3$ are circuits; indeed, if, say,
  $F_1$ and $F_2$ were not circuits, then, for any $x\in F_1\cap F_2$,
  the sets $F_1-x$, $F_2-x$, and $F_3$ would be fundamental flats of
  $M\del x$, which is impossible.  It follows that $M=C_{n,k}$ where
  $n$ and $k$ are, respectively, the largest and smallest of $|F_1\cap
  F_2|$, $|F_1\cap F_3|$, $|F_2\cap F_3|$, contrary to
  $M\not\in\mathcal{E}$.
\end{proof}

\subsection{The last step}

\begin{lemma}
  All excluded minors of $\mathcal{L}$ are in $\mathcal{E}$.
\end{lemma}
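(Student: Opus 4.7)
The plan is to derive the contradiction $M\in\mathcal{L}$ by verifying that every $M\in\mathcal{E_L}-\mathcal{E}$ satisfies the four properties of Proposition~\ref{prop:lpmchar}; this then forces $\mathcal{E_L}-\mathcal{E}=\emptyset$. Property~(i) is almost immediate from Lemma~\ref{lemma:no3}: the poset of fundamental flats has width at most two, so by Dilworth's theorem it decomposes as at most two disjoint chains $F_1\subsetneq\cdots\subsetneq F_h$ and $G_1\subsetneq\cdots\subsetneq G_k$.

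For property~(ii), let $F=F_i$ and $G=G_j$ be incomparable fundamental flats with $F\cap G\ne\emptyset$. By Lemma~\ref{lem:fewpoints}, $|E(M)-(F\cup G)|\leq 1$ and $E(M)=F\cup G$ is automatic whenever $|F\cap G|\geq 2$. The remaining case to exclude is $|F\cap G|=1$ and $E(M)=F\cup G\cup\{x\}$ with $x\notin F\cup G$; this is where $A_n$ enters. I plan to argue that this configuration forces $M=A_n$ for some $n\geq 3$. After using Corollary~\ref{cor:confun2} to strip away elements outside $F\cup G\cup\{x\}$ and Corollary~\ref{cor:ff2} to contract within any other fundamental flats, we may assume $F$ and $G$ are the only fundamental flats. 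Then $M\del x\in\mathcal{L}$ is, by Lemma~\ref{lem:minorcon}, a connected matroid with just two incomparable fundamental flats meeting in one point and covering $E(M\del x)$, so Proposition~\ref{prop:lpmchar} pins its structure down to $P'_n$. Since $\cl_M(x)=\{x\}$ by Corollary~\ref{cor:no2} and $x$ lies in no pnc-flat of $M$, the element $x$ is freely placed on $M\del x$, giving $M=(M\del x)+x=P'_n+x=A_n$, contradicting $M\notin\mathcal{E}$.

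Properties~(iv) and~(iii) I would reduce to the corresponding properties for $M/y\in\mathcal{L}$. For~(iv), given a pnc-flat $F_i\cap G_j$, pick $y\in F_i\cap G_j$: by Corollary~\ref{cor:ff2} and Lemma~\ref{lem:staycon}, the sets $F_i-y$, $G_j-y$, and $(F_i\cap G_j)-y$ are respectively fundamental, fundamental, and pnc in $M/y$, so applying Proposition~\ref{prop:lpmchar}(iv) there and using $r_{M/y}(A)+1=r_M(A\cup y)$ yields $r(F_i\cap G_j)=r(F_i)+r(G_j)-r(M)$. For~(iii), Lemma~\ref{lem:f=ir} identifies non-fundamental pnc-flats with reducible pnc-flats $H$; by enlarging the witnessing incomparable pnc-flats to incomparable fundamental ones along the two chains of~(i), $H$ must have the form $F_i\cap G_j$. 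Inclusion-exclusion together with~(ii) and~(iv) gives the identity $\eta(F_i\cap G_j)=\eta(F_i)+\eta(G_j)-\eta(M)$, so the nullity inequality in~(iii) is equivalent to $\eta(F_i\cap G_j)\geq 1$, i.e.\ to the intersection being nontrivial. The converse direction, that every such intersection is indeed a pnc-flat, follows by passing to $M/y$ for $y\in F_i\cap G_j$ and invoking Proposition~\ref{prop:lpmchar}(iii) there together with Lemma~\ref{lem:prepnc}.

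The main obstacle is property~(ii). Identifying $M\del x$ as $P'_n$ is the easier half; the delicate point is showing that $x$ is \emph{freely} placed on it, i.e., lies in no circuit of $M$ of size less than $r(M)+1$. This must be extracted from Corollary~\ref{cor:no2} (no $2$-circuits), Lemma~\ref{lem:minorcon} (connectedness of all single-element minors), and the fact that after reduction $F$ and $G$ exhaust the fundamental flats, so any smaller circuit through $x$ would either force $x$ into some pnc-flat of $M$ or alter the fundamental-flat structure of $M\del x$, in either case contradicting the reduced configuration.
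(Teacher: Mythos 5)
Your overall plan is the same as the paper's: verify properties (i)--(iv) of Proposition~\ref{prop:lpmchar}, with $A_n$ entering at (ii) and contraction to $M/y$ handling (iii) and (iv). At the top level the outline is right, and your treatment of (iv) essentially matches the paper's. But there are real gaps in (i) and (ii).

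For (i), Dilworth only gives \emph{some} partition into two chains; it does not give a partition in which no element of one chain is comparable to an element of the other. (The three-element poset $a<b$, $c<b$, $a\parallel c$ has width two but admits no cross-incomparable two-chain partition.) Property (ii) of Proposition~\ref{prop:lpmchar} implicitly forbids cross-inclusions between the chains, so you must produce a decomposition with that extra feature. The paper does this by fixing a fundamental flat $F$, using Lemma~\ref{lemma:no3} to show the flats incomparable to $F$ form one chain, using $M/y$ (for $y\in F$) and $M\del x$ (for $x\notin F$) to show the flats comparable to $F$ form the other chain, and then arguing that running the same construction from any $F_i$ would otherwise put the incomparable pair $F,G_j$ into a single chain.

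The more serious gap is in (ii). First, the ``strip/contract so that $F$ and $G$ are the only fundamental flats'' step is not a valid reduction: any proper minor of $M$ is in $\mathcal{L}$ and so carries no contradiction, and you need to show that $M$ \emph{itself} equals $A_n$, so you cannot pass to a smaller matroid. Second, even granting that $F$ and $G$ are the only fundamental flats of $M\del y$, the conclusion ``two incomparable fundamental flats meeting in one point and covering, hence $P'_n$'' is simply false: $P_x(U_{2,3},U_{3,4})$ is a connected lattice path matroid whose only pnc-flats are the two circuits meeting in $x$ and covering the ground set, yet it is not $P'_n$. What the paper actually proves is that $F_i$ and $G_j$ are \emph{circuit-hyperplanes}: circuits via Lemma~\ref{lem:tocir} (deleting a non-circuit element of $F_i$ would leave $F_i-u$ and $G_j$ violating (ii) in the minor $M\del u\in\mathcal{L}$), and hyperplanes via a contraction argument using Corollary~\ref{cor:notlpm}. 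Only after establishing the circuit-hyperplane structure does $M\del y=P'_n$ follow, and only then does the ``$y$ lies in no pnc-flat, hence freely placed'' observation give $M=A_n$. Your proof gestures at this delicacy in the last paragraph but does not supply the circuit-hyperplane argument, which is the actual content. (A smaller issue: in (iii), before contracting by $y\in F_i\cap G_j$ you need $|F_i\cap G_j|\geq 2$, which the paper extracts from the nullity inequality and semimodularity; and Lemma~\ref{lem:prepnc}'s ``exactly one of $F$, $F\cup x$'' needs the paper's two-element argument to rule out the wrong alternative.)
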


\begin{proof}
  Assume, to the contrary, $M\in\mathcal{E_L}-\mathcal{E}$.  We will
  derive the contradiction $M\in\mathcal{L}$ by showing that $M$
  satisfies properties (i)--(iv) in Proposition~\ref{prop:lpmchar}.

  Not all fundamental flats of $M$ are comparable, for otherwise $M$
  would have no other pnc-flats and Proposition~\ref{prop:chainflats}
  would give the contradiction $M \in \mathcal{C}$.  Fix a fundamental
  flat $F$ of $M$.  By Lemma~\ref{lemma:no3}, the fundamental flats of
  $M$ that are incomparable to $F$ form a chain, say $G_1\subsetneq
  G_2\subsetneq\cdots\subsetneq G_k$.  Considering $M/y$ with $y\in F$
  shows that the fundamental flats that contain $F$ form a chain;
  considering $M\del x$ with $x\not\in F$ shows that those that are
  contained in $F$ form a chain; together, these give the chain of
  fundamental flats that are comparable to $F$, say $F_1\subsetneq
  F_2\subsetneq\cdots\subsetneq F_h$.  Thus, property (i) of
  Proposition~\ref{prop:lpmchar} holds.  Note that no $F_i$ is
  comparable to any $G_j$, for otherwise the same argument starting
  with $F_i$ would have the incomparable fundamental flats $F$ and
  $G_j$ in the chain of those that are comparable to $F_i$.

  To prove property (ii), by Lemma~\ref{lem:fewpoints} it suffices to
  show that having $F_i\cap G_j = \{x\}$ and $E(M)-(F_i\cup G_j) =
  \{y\}$ yields a contradiction.  Since $F_i-x$ is connected in $M/x$,
  by Corollary~\ref{cor:charpc} and Proposition~\ref{prop:spanning}
  some spanning circuit $C$ of $F_i$ contains $x$.  If $u\in F_i-C$,
  then, using Lemma~\ref{lem:tocir}, $M\del u$ with the fundamental
  flats $F_i-u$ and $G_j$ would contradict property (ii).  It follows
  that $F_i$, and likewise $G_j$, is a circuit.  We claim that both
  are also hyperplanes.  If $F_i$ were not a hyperplane, then
  $\cl_M(F_i\cup y)\ne E(M)$, so there would be a $z\in
  G_j-\cl_M(F_i\cup y)$.  Thus, $y\not\in\cl_M(F_i\cup z)$.  Thus, in
  $M/z$, the pnc-flats $G_j-z$ and $\cl_{M/z}(F_i)$ would be
  incomparable and not disjoint, yet their union would contain all
  elements except $y$, contrary to Corollary~\ref{cor:notlpm}.  Since
  no pnc-flat is comparable to either $F_i$ or $G_j$ (they are
  circuit-hyperplanes) and since no three fundamental flats are
  incomparable, there are no other fundamental flats and so no other
  pnc-flats.  Thus, $y$ is in no pnc-flat and $M\del y = P'_n$, which
  gives the contradiction $M=A_n$, so property (ii) holds.

  To prove properties (iii) and (iv), first note that since the
  fundamental flats of $M$ form two chains, the other (i.e.,
  reducible) pnc-flats are among the nonempty sets $F_i\cap G_j$.
  First assume $F_i\cap G_j$ is a pnc-flat of $M$.  Fix $x\in F_i\cap
  G_j$.  Now $(F_i\cap G_j)-x$ is a pnc-flat of $M/x$; also, $F_i-x$
  and $G_j-x$ are fundamental flats in $M/x$.  Since $M/x\in
  \mathcal{L}$, we have
  $\eta(M/x)<\eta_{M/x}(F_i-x)+\eta_{M/x}(G_j-x)$, which gives
  $\eta(M)<\eta_M(F_i)+\eta_M(G_j)$.  Property (iv) for $F_i$ and
  $G_j$ in $M$ follows from this property for $F_i-x$ and $G_j-x$ in
  $M/x$.

  Now assume $F_i\cap G_j\ne \emptyset$ and
  $\eta(M)<\eta(F_i)+\eta(G_j)$.  Since $F_i\cup G_j = E(M)$, this
  inequality can be recast as $|F_i\cup G_j|-r(F_i\cup G_j) <
  |F_i|-r(F_i) + |G_j|-r(G_j)$.  Since $F_i\cap G_j\ne\emptyset$,
  semimodularity gives $r(F_i)+r(G_j)-r(F_i\cup G_j)\geq 1$.  The last
  two inequalities give $|F_i\cap G_j|\geq 2$.  Fix $x\in F_i\cap
  G_j$.  Now $F_i-x$ and $G_j-x$ are incomparable fundamental flats of
  $M/x$ that are not disjoint; also, the assumed inequality about
  nullity gives $\eta(M/x)<\eta_{M/x}(F_i-x)+\eta_{M/x}(G_j-x)$.
  Therefore $(F_i-x)\cap (G_j-x)$ is a pnc-flat of $M/x$, so either
  $F_i\cap G_j$ or $(F_i\cap G_j)-x$ is a pnc-flat of $M$.  If
  $F_i\cap G_j$ were not a pnc-flat of $M$, then the same argument
  using some $y\in (F_i\cap G_j)-x$ would give both $(F_i\cap G_j)-x$
  and $(F_i\cap G_j)-y$ being pnc-flats of $M$, which is impossible
  since both $x$ and $y$ would need to be isthmuses of $M|F_i\cap G_j$
  for both sets to be flats.  Thus, $F_i\cap G_j$ is a pnc-flat of
  $M$.  The rank assertion follows as above.  This completes the proof
  that $M$ satisfies the properties in Proposition~\ref{prop:lpmchar}
  and so, contrary to the assumption, $M\in \mathcal{L}$.
\end{proof}

\end{document}